\newcommand\numberthis{\addtocounter{equation}{1}\tag{\theequation}}
\newcommand{\cont}[1]{\widetilde{#1}}
\newcommand{\cdlim}[3]{\left(\left. #1 \right| #2\right) \Rightarrow #3}
\newcommand{\degreesqn}[1]{\mathfrak{D}(#1)}
\newcommand{\bisqn}[1]{\widehat{\mathfrak{D}}(#1)}
\newtheorem{defn}{Definition}[section]
\newtheorem{lem}[defn]{Lemma}
\newtheorem{prop}[defn]{Proposition}
\newtheorem{thm}[defn]{Theorem}
\newtheorem{cond}[defn]{Condition}
\title{Convergence of rank based degree-degree correlations in random directed networks}
\author{Pim van der Hoorn\footnote{University of Twente, w.l.f.vanderhoorn@utwente.nl}, 
	Nelly Litvak\footnote{University of Twente, n.litvak@utwente.nl}}
\date{\today}
\begin{document}

\normalem

\maketitle

\begin{abstract}
	We introduce, and analyze, three measures for degree-degree dependencies, also called degree assortativity,
	in directed random graphs, based on Spearman's rho and Kendall's	tau. We proof statistical 
	consistency of these measures in general random graphs and show that the directed Configuration 
	Model can serve as a null model for	our degree-degree dependency measures. Based on these results
	we argue that the measures we introduce should be preferred over Pearson's correlation coefficients,
	when studying degree-degree dependencies, since the latter has several issues in the case of
	large networks with scale-free degree distributions.
\end{abstract}


\noindent\textbf{Keywords:} Degree-degree dependencies, rank correlations, directed random graphs, 
directed configuration model, Spearman's rho, Kendall's tau

\section{Introduction}\label{sec:introduction}

This paper investigates statistical consistency of rank correlation measures for dependencies between 
in- and/or out-degrees on both sides of a randomly sampled edge in large directed networks, such as 
the World Wide Web, Wikipedia, or Twitter. These dependencies, also called the assortativity of the 
network, degree correlations, or degree-degree dependencies, represent an important 
topological property of real-world networks, and they have received a vast attention in the 
literature, starting with the work of Newman~\cite{Newman2002,Newman2003}. 

The underlying question that motivates analysis of degree-degree dependencies is whether nodes of 
high in- or out-degree are more likely to be connected to nodes of high or low in- or out-degree. 
These dependencies have been shown to influence many topological features of networks, among others, 
behavior of epidemic spreading~\cite{Boguna2003}, social consensus in Twitter~\cite{liu2014impact}, 
stability of P2P networks under attack~\cite{srivastava2011} and network observability 
\cite{Hasegawa2013}. Therefore, being able to properly measure degree-degree dependencies is 
essential in modern network analysis.  

Given a network, represented by a directed graph, a measurement of degree-degree dependency usually 
consists of computing some expression that is defined by the degrees at both sides of the edges. Here 
the value on each edge can be seen as a realization of some unknown `true' parameter that 
characterizes the degree-degree dependency.

Currently, the most commonly used measure for degree-degree dependencies is a so-called 
\textit{assortativity coefficient}, introduced in \cite{Newman2002,Newman2003}, that computes 
Pearson's correlation coefficient for the degrees at both sides of an edge. However, this
dependency measure suffers from the fact that most real-world networks have highly skewed degree 
distributions, also called \textit{scale-free} distributions, formally described by power laws, 
or more formally, regularly varying distributions. Indeed, when the (in- or out-) degree at the end
of a random edge has infinite variance, then Pearson's coefficient is ill-defined. As a result, 
the dependency measure suggested in \cite{Newman2002,Newman2003} depends on the graph size and 
converges to a non-negative number in the infinite network size limit, as was pointed out in 
several papers~\cite{Dorogovtsev2010,Litvak2013}. The detailed mathematical analysis and examples 
for undirected graphs have been given in~\cite{Litvak2012}, and for directed graphs in our recent 
work~\cite{hoorn2013}. Thus, Pearson's correlation coefficient is not suitable for measuring 
degree-degree dependencies in most real-world directed networks.

The fact that the most commonly used degree correlation measure has obvious mathematical 
flaws, motivates for design and analysis of new estimators. Despite the importance of degree-degree 
dependencies and vast interest from the research community, this remains a largely open problem.

In~\cite{Litvak2012} it was suggested to use a rank correlation measure, Spearman's rho, and it was 
proved that under general regularity conditions, this measure indeed converges to its correct 
population value. Both configuration model and preferential attachment model~\cite{VanDerHofstad2007} 
were proved to satisfy these conditions. In~\cite{hoorn2013} we proposed three rank correlation 
measures, based on Spearman's rho and Kendall's tau, as defined for integer valued random 
varibles, cf.~\cite{Mesfioui2005}, and we compared these measures to Pearson's correlation 
coefficient on Wikipedia graphs for nine different languages. 

In this paper we first prove that, under the convergence assumption of the empirical two-dimensional 
distributions of the degrees on both sides of a random edge, the rank correlations defined in
\cite{hoorn2013} are indeed statistically consistent estimators of degree-degree dependencies. We 
obtain their limiting values in terms of the limiting distributions of the degrees.

Next, we apply our results to the recently developed directed Configuration Model~\cite{chen2013}. 
Roughly speaking, in this model, each node is given a random number of in- and out-bound stubs, that 
are subsequently connected to each other at random. Since multiple edges and self-loops may appear 
as a result of such random wiring, \cite{chen2013} presents two versions of the directed Configuration 
Model. The \textit{repeated} version repeats the wiring until the resulting graph is simple, while 
the \textit{erased} version merges multiple edges and removes self-loops to obtain a simple graph.

We analyze our suggested rank correlation measures in the Repeated and Erased Configuration Model, 
as described in~\cite{chen2013}, and prove that all three measures converge to zero in both models. 
This result is not very surprising for the repeated model, since we connect vertices uniformly at 
random. However, in the erased scenario, the graph is made simple by design, and this might 
contribute to the network showing negative degree-degree dependencies as observed and discussed in, 
for instance,~\cite{Maslov2004,Park2003}. Our result shows that such negative degree-degree 
dependencies vanish for sufficiently large graphs, and thus both flavors of the directed 
Configuration Model can be used as `null model' for our three rank correlation measures. 

By proving consistency of three estimators for degree-degree dependencies in directed networks, and 
providing an easy-to-construct null model for these estimators, this paper makes an important step 
towards assessing statistical significance of degree-degree dependencies in a mathematically 
rigorous way. 

This paper is structured as follows. In Section~\ref{sec:notations} we introduce notations, used 
throughout this paper. Then, in Section~\ref{sec:rankcorrelations}, we prove a general theorem 
concerning statistical consistency of estimators for Spearman's rho and Kendall's tau on 
integer-valued data. This result is applied in Section~\ref{sec:randomgraphs} in the setting of 
random graphs to prove the convergence in the infinite size graph limit of the three degree-degree 
dependency measures from~\cite{hoorn2013}, based on Spearman's rho and Kendall's tau. We analyze 
both the Repeated and Erased Directed Configuration Model in Section~\ref{sec:config}.

\section{Notations and definitions}\label{sec:notations}

Throughout the paper, if $X$ and $Y$ are random variables we denote their distribution functions by 
$F_X$ and $F_Y$, respectively, and their joint distribution by $H_{X, Y}$.  For integer valued random 
variables $X, Y$ and $k, l \in \Z$ we will often use the following notations:
\begin{align}
	\mathcal{F}_X(k) &= F_X(k) + F_X(k - 1), \label{eq:scriptF}\\
	\mathcal{H}_{X,Y}(k, l) &= H_{X,Y}(k, l) + H_{X,Y}(k - 1, l) + H_{X,Y}(k, l - 1) 
	+ H_{X,Y}(k - 1, l - 1). \label{eq:scriptH}
\end{align}

If $Z$ is a random element, we define the function $F_{X|Z} : \R \times \Omega \to [0, 1]$ by
\[
	F_{X|Z}(x, \omega) = \CExp{\Ind{X \le x}}{Z}(\omega),
\]
where $\Ind{X \le x}$ denotes the indicator of the event $\{\omega : X(\omega) \le x\}$. We 
furthermore define the random variable $F_{X|Z}(Y)$ by 
\[
	F_{X|Z}(Y)(\omega) = F_{X|Z}(Y(\omega), \omega),
\]
and we write $F_{X|Z}(x)$ to indicate the random variable $\CExp{\Ind{X \le x}}{Z}$. With these 
notations it follows that if $X'$ is an independent copy of $X$, then
\begin{align*}
	\CExp{\Ind{X' \le X}}{Z} &= \int_{\R} \int_{\R} \Ind{z \le x} d\Prob{z|Z} d\Prob{x|Z} \\
	&= \int_{\R} \CExp{\Ind{X' \le x}}{Z} d\Prob{x|Z} \\
	&= \CExp{F_{X|Z}(X)}{Z}.
\end{align*}
Using similar definitions for $H_{X,Y|Z}(x, y, \omega)$ and $H_{X,Y|Z}(X, Y)$ we get, if
$(X', Y')$ and $(X'', Y'')$ are independent copies of $(X, Y)$, that
\[
	\CExp{\Ind{X' \le X}\Ind{Y'' \le Y}}{Z} = \CExp{H_{X,Y|Z}(X, Y)}{Z}.
\]
For integer valued random variables $X$ and $Y$, the random variables $\mathcal{F}_{X|Z}(k)$ and 
$\mathcal{H}_{X, Y|Z}(k, l)$ are defined similarly to~\eqref{eq:scriptF} and~\eqref{eq:scriptH}, 
using $F_{X|Z}(k)$ and $H_{X, Y|Z}(k, l)$, respectively.

We introduce the following notion of convergence, related to convergence in distribution.

\begin{defn}\label{def:condconvdist}
	Let $\{X_n\}_{n \in \N}$ and $X$ be random variables and $\{Z_n\}_{n \in \N}$ be a sequence of 
	random elements. We say that $X_n$ converges in distribution to $X$ conditioned on $Z_n$ and 
	write
	\[
		\cdlim{X_n}{Z_n}{X} \quad \text{as } n \to \infty
	\]
	if and only if for all continuous, bounded $h : \R \to \R$
	\[
		\CExp{h(X_n)}{Z_n} \plim \Exp{h(X)} \quad \text{as } n \to \infty.
	\]
\end{defn}

Here $\plim$ denotes convergence in probability. Note that if $h$ is bounded then $\CExp{h(X_n)}
{Z_n}$ is bounded almost everywhere, hence $\lim_{n \to \infty} \Exp{h(X_n)} = \lim_{n \to \infty}
\Exp{\CExp{h(X_n)}{Z_n}} = \Exp{h(X)}$. Therefore, $\cdlim{X_n}{Z_n}{X}$ implies that $X_n 
\Rightarrow X$, where we write $\Rightarrow$ for convergence in distribution. Similar to 
convergence in distribution, it holds that Definition~\ref{def:condconvdist} is equivalent to
\[
	F_{X_n|Z_n}(k) \plim F_X(k) \quad \text{as } n \to \infty, \quad \text{for all } k \in \Z.
\] 

In this paper we use a continuization principle, applied for instance in~\cite{Mesfioui2005}, 
where we transform given discrete random variables in continuous ones. From here on we will work 
with integer valued random variables instead of arbitrary discrete random variables.

\begin{defn}\label{def:cont}
	Let $X$ be an integer valued random variable and $U$ a uniformly distributed random variable on 
	$[0, 1)$ independent of $X$. Then we define the \emph{continuization} of $X$ as
	\[
		\cont{X} = X + U.
	\]  
\end{defn}

We will refer to $U$ as the \emph{continuous part} of $\cont{X}$. We remark that although we have 
chosen $U$ to be uniform we could instead take any continuous random variable on $[0, 1)$ with 
strictly increasing cdf, cf.~\cite{Denuit2005}.

\section{Rank correlations for integer valued random variables}\label{sec:rankcorrelations}

We will use the rank correlations Spearman's rho and Kendall's tau for integer valued random
variables as defined in~\cite{Mesfioui2005}. Below we will state these and rewrite them in terms of
the functions $\mathcal{F}$ and $\mathcal{H}$, defined in~\eqref{eq:scriptF} and~\eqref{eq:scriptH}
respectively. We will then proceed, defining estimators for these correlations and prove that, under 
natural conditions, these converge to the correct value.

\subsection{Spearman's rho}

Given two integer valued random variables $X$ and $Y$, Spearman's rho $\spearman(X, Y)$ is defined as, 
c.f.~\cite{Mesfioui2005}
\begin{align*}
	\spearman(X, Y) &= 3\left(\Prob{X < X', Y < Y''} + \Prob{X \le X', Y < Y''} \right. \\
	&\hspace{20pt}+\left. \Prob{X < X', Y \le Y''} + \Prob{X \le X', Y \le Y''} - 1\right), 
\end{align*}
where $(X', Y')$ and $(X'', Y'')$ are independent copies of $(X, Y)$. We will rewrite this expression, 
starting with a single term:
\begin{align*}
	\Prob{X < X', Y < Y''} &= \Exp{\Ind{X < X'}\Ind{Y < Y''}} \\
	&= 1 - \Exp{\Ind{X' \le X}} - \Exp{\Ind{Y'' \le Y}} + \Exp{\Ind{X' \le X}\Ind{Y'' \le Y}} \\
	&= 1 - \Exp{F_X(X)} - \Exp{F_Y(Y)} + \Exp{F_X(X)F_Y(Y)}.
\end{align*}
If we do the same for the other three terms and use~\eqref{eq:EF} we obtain,
\begin{equation}
	\spearman(X, Y) = 3\Exp{\mathcal{F}_X(X)\mathcal{F}_Y(Y)} - 3.
\label{eq:spearmanxy}
\end{equation}

Since, given two continuous random variables $\mathcal{X}$ and $\mathcal{Y}$, Spearman's rho is 
defined as
\[
	\spearman(\mathcal{X}, \mathcal{Y}) = 12\Exp{F_{\mathcal{X}}(\mathcal{X})F_{\mathcal{Y}}(\mathcal{Y})} 
	- 3,
\]
Lemma~\ref{lem:contjointmoments} now implies that
\begin{align}
	\spearman(X, Y) = \spearman(\cont{X}, \cont{Y}).
	\label{eq:spearmanintuniform}
\end{align}

\subsection{Kendall's tau}

For two continuous random variables $\mathcal{X}$ and $\mathcal{Y}$, Kendall's tau $\tau(\mathcal{X}, 
\mathcal{Y})$ is defined as
\[
	\tau(\mathcal{X}, \mathcal{Y}) = 4\Exp{H_{\mathcal{X}, \mathcal{Y}}(\mathcal{X}, \mathcal{Y})} - 1.
\]
Given two discrete random variables $X$ and $Y$, Kendall's Tau can be written as, c.f.
\cite{Mesfioui2005} Proposition 2.2,
\begin{equation}
	\kendall(X, Y) = \Exp{\mathcal{H}_{X, Y}(X, Y)} - 1.
	\label{eq:kendallxy}
\end{equation}
Similar to Spearman's rho we obtain, using Lemma~\ref{lem:contjointmoments}, that
\begin{align}
	\kendall(X, Y) = \kendall(\cont{X}, \cont{Y}).
	\label{eq:kendallintuniform}
\end{align}
Hence applying the continuization principle from Definition~\ref{def:cont} on $X$ and $Y$ preserves 
both rank correlations. We remark that~\eqref{eq:spearmanintuniform} and~\eqref{eq:kendallintuniform} 
were obtained for arbitrary discrete random variables, using a different approach, in~\cite{Mesfioui2005}.

\subsection{Convergence for Spearman's rho and Kendall's tau}

Let $\{X_n\}_{n \in \N}$ and $\{Y_n\}_{n \in \N}$ be sequences of integer valued random variables. 
If $(X_n, Y_n) \Rightarrow (X, Y)$, for some integer valued random variables $X$ and $Y$, then 
$\lim_{n \to \infty}\Exp{\mathcal{F}_{X_n}(X_n)\mathcal{F}_{Y_n}(Y_n)} = \Exp{\mathcal{F}_X(X)
\mathcal{F}_Y(Y)}$ which implies that $\lim_{n \to \infty} \spearman(X_n, Y_n) = \spearman(X, Y)$. 
The next theorem generalizes this to the setting of the convergence of $(X_n, Y_n|Z_n)$, of 
Definition~\ref{def:condconvdist}.

\begin{thm}\label{thm:convrankcorr}
	Let $\{X_n\}_{n \in \N}$, $\{Y_n\}_{n \in \N}$ be sequences of integer valued random variables 
	for which there exist	a sequence $\{Z_n\}_{n \in \N}$ of random elements and two integer 
	valued random variables $X$ and $Y$ such that 
	\[
		\cdlim{X_n, Y_n}{Z_n}{(X, Y)} \quad \text{as } n \to \infty.
	\]
	Then, as $n \to \infty$,
	\begin{enumerate}[\upshape i)]
		\item 
		$3\CExp{\mathcal{F}_{X_n|Z_n}(X_n)\mathcal{F}_{Y_n|Z_n}(Y_n)}{Z_n} - 3 \, \plim \spearman(X, Y)$ 
		and
		\item
		$\CExp{\mathcal{H}_{X_n, Y_n|Z_n}(X_n, Y_n)}{Z_n} - 1 \, \plim \kendall(X, Y)$.
	\end{enumerate}
	Moreover, we also have convergence of the expectations:
	\begin{enumerate}[\upshape i)]
		\setcounter{enumi}{2}
		\item 
		$\lim_{n \to \infty} 3\Exp{\mathcal{F}_{X_n|Z_n}(X_n)\mathcal{F}_{Y_n|Z_n}(Y_n)} - 3 = 
		\spearman(X, Y)$ and
		\item
		$\lim_{n \to \infty} \Exp{\mathcal{H}_{X_n, Y_n|Z_n}(X_n, Y_n)} - 1 = \kendall(X, Y)$.
	\end{enumerate}
\end{thm}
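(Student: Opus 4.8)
The plan is to reduce the statement to pointwise-in-$(k,l)$ convergence of the conditional distribution functions, then handle the resulting infinite lattice sums by a truncation together with a tightness estimate, and finally pass from the conditional statements i), ii) to the unconditional ones iii), iv) by a bounded-convergence argument.

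First I would record what $\cdlim{X_n, Y_n}{Z_n}{(X, Y)}$ yields at the level of distribution functions. Applying the definition to test functions of the form $h(x,y)=g(x)$ and $h(x,y)=g(y)$ shows $(X_n\mid Z_n)\Rightarrow X$ and $(Y_n\mid Z_n)\Rightarrow Y$, so by the equivalence noted after Definition~\ref{def:condconvdist} one has $F_{X_n|Z_n}(k)\plim F_X(k)$ and $F_{Y_n|Z_n}(l)\plim F_Y(l)$ for every $k,l\in\Z$. For the joint law, testing against the tent function $h(x,y)=\max(0,1-|x-k|)\max(0,1-|y-l|)$, which is continuous, bounded, and agrees with $\Ind{(x,y)=(k,l)}$ on $\Z^2$, gives $\Prob{X_n=k,Y_n=l\mid Z_n}\plim\Prob{X=k,Y=l}$, equivalently $H_{X_n,Y_n|Z_n}(k,l)\plim H_{X,Y}(k,l)$. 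Combining these with \eqref{eq:scriptF} and \eqref{eq:scriptH} gives $\mathcal{F}_{X_n|Z_n}(k)\plim\mathcal{F}_X(k)$, $\mathcal{F}_{Y_n|Z_n}(l)\plim\mathcal{F}_Y(l)$ and $\mathcal{H}_{X_n,Y_n|Z_n}(k,l)\plim\mathcal{H}_{X,Y}(k,l)$ for all $k,l\in\Z$.

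Next, for a fixed $M\in\N$ I would split the conditional expectation in i) into a finite part over $\{|X_n|\le M,\ |Y_n|\le M\}$ and a tail. The finite part equals
\[
	\sum_{|k|\le M,\ |l|\le M}\mathcal{F}_{X_n|Z_n}(k)\,\mathcal{F}_{Y_n|Z_n}(l)\,\Prob{X_n=k,Y_n=l\mid Z_n},
\]
a finite sum of products of sequences converging in probability, hence it converges in probability to $\sum_{|k|\le M,\ |l|\le M}\mathcal{F}_X(k)\mathcal{F}_Y(l)\Prob{X=k,Y=l}$, which tends to $\Exp{\mathcal{F}_X(X)\mathcal{F}_Y(Y)}$ as $M\to\infty$; the corresponding statement for ii) is identical with $\mathcal{H}_{X_n,Y_n|Z_n}(k,l)$ in place of the product. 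The main obstacle is controlling the tail uniformly enough in $n$. Since $0\le\mathcal{F}_{X_n|Z_n}\le 2$ and $0\le\mathcal{H}_{X_n,Y_n|Z_n}\le 4$, the tail contribution is bounded, up to a constant factor, by
\[
	\Prob{|X_n|>M\mid Z_n}+\Prob{|Y_n|>M\mid Z_n}
	=\bigl(1-F_{X_n|Z_n}(M)+F_{X_n|Z_n}(-M-1)\bigr)+\bigl(1-F_{Y_n|Z_n}(M)+F_{Y_n|Z_n}(-M-1)\bigr),
\]
which by the first step converges in probability to $(1-F_X(M)+F_X(-M-1))+(1-F_Y(M)+F_Y(-M-1))$; given $\varepsilon>0$ this limit can be made $<\varepsilon$ by choosing $M$ large, and then for all large $n$ the tail is $<2\varepsilon$ with probability at least $1-\varepsilon$. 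An $\varepsilon/3$ argument over the three contributions (tail of $(X_n,Y_n)$, approximation error of the partial sum, tail of $(X,Y)$) then yields $\CExp{\mathcal{F}_{X_n|Z_n}(X_n)\mathcal{F}_{Y_n|Z_n}(Y_n)}{Z_n}\plim\Exp{\mathcal{F}_X(X)\mathcal{F}_Y(Y)}$ and $\CExp{\mathcal{H}_{X_n,Y_n|Z_n}(X_n,Y_n)}{Z_n}\plim\Exp{\mathcal{H}_{X,Y}(X,Y)}$; multiplying by $3$ and subtracting $3$, respectively subtracting $1$, and invoking \eqref{eq:spearmanxy} and \eqref{eq:kendallxy} proves i) and ii).

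For iii) and iv) I would note that the conditional expectations just shown to converge in probability are bounded by the constant $4$, being $\CExp{\cdot}{Z_n}$ of random variables taking values in $[0,4]$; a uniformly bounded sequence converging in probability also converges in $L^1$, so their expectations converge. By the tower property $\Exp{\CExp{\mathcal{F}_{X_n|Z_n}(X_n)\mathcal{F}_{Y_n|Z_n}(Y_n)}{Z_n}}=\Exp{\mathcal{F}_{X_n|Z_n}(X_n)\mathcal{F}_{Y_n|Z_n}(Y_n)}$, and likewise for $\mathcal{H}$, so these unconditional expectations converge to $\Exp{\mathcal{F}_X(X)\mathcal{F}_Y(Y)}=(\spearman(X,Y)+3)/3$ and to $\Exp{\mathcal{H}_{X,Y}(X,Y)}=\kendall(X,Y)+1$, which after multiplying by $3$ and subtracting $3$, respectively subtracting $1$, gives iii) and iv).
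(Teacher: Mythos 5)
Your proof is correct, but it takes a genuinely different route from the one in the paper. The paper never truncates: it first passes to the continuizations $\cont{X}_n$, $\cont{Y}_n$ via Lemma~\ref{lem:contjointmoments} and Proposition~\ref{prop:condcontjointprop}, so that $\CExp{\mathcal{F}_{X_n|Z_n}(X_n)\mathcal{F}_{Y_n|Z_n}(Y_n)}{Z_n}$ becomes $4\,\CExp{F_{\cont{X}_n|Z_n}(\cont{X}_n)F_{\cont{Y}_n|Z_n}(\cont{Y}_n)}{Z_n}$, and then splits the error into a term bounded by $\sup_{x,y}\left|F_{\cont{X}_n|Z_n}(x)F_{\cont{Y}_n|Z_n}(y)-F_{\cont{X}}(x)F_{\cont{Y}}(y)\right|$, handled by upgrading the pointwise convergences \eqref{eq:thm31_convergence_x}--\eqref{eq:thm31_convergence_xy} to uniform ones (a P\'olya-type step the paper asserts using integrality of $X$ and $Y$), plus a term of the form $\CExp{h(\cont{X}_n,\cont{Y}_n)}{Z_n}-\Exp{h(\cont{X},\cont{Y})}$ with $h(x,y)=F_{\cont{X}}(x)F_{\cont{Y}}(y)$ continuous and bounded, which vanishes by the hypothesis itself. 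You instead stay on the integer lattice and replace the uniformity step by an explicit truncation at level $M$ together with tightness of the limit law; only \eqref{eq:spearmanxy} and \eqref{eq:kendallxy} are used to identify the limits. What you gain is self-containedness: you avoid the continuization lemmas at this stage and, more to the point, you make explicit the tightness argument that is implicitly behind the paper's unproved claim that the convergences hold uniformly (for convergence in probability this does require exactly the kind of $\varepsilon$--$M$ bookkeeping you carry out). What the paper's route buys is a clean Kolmogorov-distance bound and reuse of the continuization machinery that is needed anyway in Section~\ref{sec:randomgraphs}. One small remark, not a gap: the convergence $H_{X_n,Y_n|Z_n}(k,l)\plim H_{X,Y}(k,l)$ that you need for $\mathcal{H}$ in part ii) is obtained most directly by testing against continuous bounded functions agreeing with $\Ind{x\le k}\Ind{y\le l}$ on $\Z^2$, rather than by declaring it ``equivalent'' to your pmf limits, since summing the pmf limits over an unbounded range of lattice points itself requires a tightness step when $X$ or $Y$ is unbounded below. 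Parts iii) and iv) you obtain exactly as the paper does, from boundedness of the conditional expectations.
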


\begin{proof}
	Observe first that since $\cdlim{X_n, Y_n}{Z_n}{(X, Y)}$, it follows that for all $k, l \in \Z$, 
	as $n \to \infty$,
	\begin{align}
		F_{X_n|Z_n}(k) &\plim F_{X}(k) \label{eq:thm31_convergence_x}\\
		F_{Y_n|Z_n}(l) &\plim F_{Y}(l) \label{eq:thm31_convergence_y}\\
		H_{X_n, Y_n|Z_n}(k, l) &\plim H_{X, Y}(k, l). \label{eq:thm31_convergence_xy}
	\end{align}
	Moreover, these convergence hold uniformly, since $X$ and $Y$ are integer valued.
	
 	i) Using first~\eqref{eq:spearmanxy} and then applying Lemma~\ref{lem:contjointmoments} 
	and Proposition~\ref{prop:condcontjointprop} we obtain,
	\begin{align}
		&\left|3\CExp{\mathcal{F}_{X_n|Z_n}(X_n)\mathcal{F}_{Y_n|Z_n}(Y_n)}{Z_n} 
			- 3 - \spearman(X, Y)\right| \notag\\
		&=3\left|\CExp{\mathcal{F}_{X_n|Z_n}(X_n)\mathcal{F}_{Y_n|Z_n}(Y_n)}{Z_n} 
			- \Exp{\mathcal{F}_X(X)\mathcal{F}_Y(Y)}\right| \notag\\
		&= 12\left|\CExp{F_{\cont{X}_n|Z_n}(\cont{X}_n)F_{\cont{Y}_n|Z_n}(\cont{Y}_n)}{Z_n} 
			- \Exp{F_{\cont{X}}(\cont{X})F_{\cont{Y}}(\cont{Y})}\right| \notag\\
		&\le 12\left|\CExp{F_{\cont{X}_n|Z_n}(\cont{X}_n)F_{\cont{Y}_n|Z_n}(\cont{Y}_n)}{Z_n}
			- \CExp{F_{\cont{X}}(\cont{X}_n)F_{\cont{Y}}(\cont{Y}_n)}{Z_n}\right| \notag\\
		&\hspace{10pt}+ 12\left|\CExp{F_{\cont{X}}(\cont{X}_n)F_{\cont{Y}}(\cont{Y}_n)}{Z_n} 
			- \Exp{F_{\cont{X}}(\cont{X})F_{\cont{Y}}(\cont{Y})}\right| \notag\\
		&\le 12\sup_{x, y \in \R} \left|F_{\cont{X}_n|Z_n}(x)F_{\cont{Y}_n|Z_n}(y)
			- F_{\cont{X}}(x)F_{\cont{Y}}(y)\right| \label{eq:convrankcorr1}\\
		&\hspace{10pt}+ 12\left|\CExp{F_{\cont{X}}(\cont{X}_n)F_{\cont{Y}}(\cont{Y}_n)}{Z_n} 
			- \Exp{F_{\cont{X}}(\cont{X})F_{\cont{Y}}(\cont{Y})}\right| \label{eq:convrankcorr2}.
	\end{align}
	Because the function $h(x, y) = F_{\cont{X}}(x)F_{\cont{Y}}(y)$ is continuous and bounded,
	\eqref{eq:convrankcorr2} converges in probability to 0. For~\eqref{eq:convrankcorr1} we observe 
	that 
	\begin{align*}
		\left|F_{\cont{X}_n|Z_n}(x)F_{\cont{Y}_n|Z_n}(y) - F_{\cont{X}}(x)F_{\cont{Y}}(y)\right| 
		&\le \left|F_{\cont{X}_n|Z_n}(x)F_{\cont{Y}_n|Z_n}(y) - F_{\cont{X}_n|Z_n}(x)F_{\cont{Y}}(y)\right| \\
		&\hspace{10pt}+ \left|F_{\cont{X}_n|Z_n}(x)F_{\cont{Y}}(y) - F_{\cont{X}}(x)F_{\cont{Y}}(y)\right| \\
		&\le \left|F_{\cont{Y}_n|Z_n}(y) - F_{\cont{Y}}(y)\right| + \left|F_{\cont{X}_n|Z_n}(x) 
		- F_{\cont{X}}(x)\right|.
	\end{align*}
	It now follows that~\eqref{eq:convrankcorr1} converges in probability to 0, since the convergence
	\eqref{eq:thm31_convergence_x} and~\eqref{eq:thm31_convergence_y} are	uniform. 

	ii) Here we again use Lemma~\ref{lem:contjointmoments} and Proposition~\ref{prop:condcontjointprop},
	now combined with~\eqref{eq:kendallxy} to obtain,
	\begin{align*}
		&\left|\CExp{\mathcal{H}_{X_n, Y_n|Z_n}(X_n, Y_n)}{Z_n} - 1 - \kendall(X, Y)\right| \\
		&= \left|\CExp{\mathcal{H}_{X_n, Y_n|Z_n}(X_n, Y_n)}{Z_n} - \Exp{\mathcal{H}_{X, Y}(X, Y)}\right| \\
		&= 4\left|\CExp{H_{\cont{X}_n, \cont{Y}_n|Z_n}(\cont{X}_n, \cont{Y}_n)}{Z_n}
			- \Exp{H_{\cont{X}, \cont{Y}}(\cont{X}, \cont{Y})}\right| \\
		&\le 4\left|\CExp{H_{\cont{X}_n, \cont{Y}_n|Z_n}(\cont{X}_n, \cont{Y}_n)}{Z_n}
			- \CExp{H_{\cont{X}, \cont{Y}}(\cont{X}_n, \cont{Y}_n)}{Z_n}\right| \\
		&\hspace{10pt}+ 4\left|\CExp{H_{\cont{X}, \cont{Y}}(\cont{X}_n, \cont{Y}_n)}{Z_n}
			- \Exp{H_{\cont{X}, \cont{Y}}(\cont{X}, \cont{Y})}\right| \\
		&\le 4 \sup_{x, y \in \R} \left|H_{\cont{X}_n, \cont{Y}_n|Z_n}(x, y)
			- H_{\cont{X}, \cont{Y}}(x, y)\right|
		+ 4\left|\CExp{H_{\cont{X}, \cont{Y}}(\cont{X}_n, \cont{Y}_n)}{Z_n}
			- \Exp{H_{\cont{X}, \cont{Y}}(\cont{X}, \cont{Y})}\right|
	\end{align*}
	The former term converges in probability to 0 because~\eqref{eq:thm31_convergence_xy} holds uniformly, 
	and for the latter this holds since $h(x, y) = H_{\cont{X}, \cont{Y}}(x, y)$ is continuous and bounded.
			
	Since both $\CExp{\mathcal{F}_{X_n|Z_n}(X_n)\mathcal{F}_{Y_n|Z_n}(Y_n)}{Z_n}$ and 
	$\CExp{\mathcal{H}_{X_n, Y_n|Z_n}(X_n, Y_n)}{Z_n}$ are bounded a.e. we obtain iii) and iv) directely 
	from i) and ii), respectively.
\end{proof}

\section{Rank correlations for random graphs}\label{sec:randomgraphs}

We now turn to the setting of rank correlations for degree-degree dependencies in random directed 
graphs. We will first introduce some terminology concerning random graphs. Then we will
recall the rank correlations given in~\cite{hoorn2013} and prove statistical consistency of
these measures.

\subsection{Random graphs}

Given a directed graph $G = (V, E)$, we denote by $\left(D^+(v), D^-(v)\right)_{v \in V}$ the degree 
sequence where $D^+$ denotes the out-degree and $D^-$ the in-degree. We adopt the convention, 
introduced in~\cite{hoorn2013}, to index the degree type by $\alpha, \beta \in \{+, -\}$. 
Furthermore, we will use the projections $\pi_\ast, \pi^\ast : V^2 \to V$ to distinguish the source 
and target of a possible edge. That is, if $(v, w) \in V^2$ then $\pi_\ast(v, w) = v$ and $\pi^\ast
(v, w) = w$. When both projections are applicable we will use $\pi$. For $v, w \in V$ we denote by 
$E(v, w) = \{e \in E | \pi_\ast e = v, \pi^\ast e = w\}$ the set of all edges from $v$ to $w$. For 
$e \in V^2$, we write $E(e) = E(\pi_\ast e, \pi^\ast e)$.

Given a set $V$ of vertices we call a graph $G = (V, E)$ random, if for each $e \in V^2$, $|E(e)|$ 
is a random variable. Since $\Ind{e \in E} = \Ind{|E(e)| > 0}$, it follows that the former is also 
a random variable, cf.~\cite{Chung2003} for a similar definition of random graphs using edge 
indicators. Therefore, when we refer to $G$ as a random element it is understood that we refer to 
the random variables $|E(e)|$, for $e \in V^2$.

When $G$ is a random graph, the number of edges in the graph and the degrees of the nodes are 
random variables defined by $\Ind{e \in E}$ and $|E(e)|$, $e\in V^2$: 
\begin{align*}
|E|& = \sum_{e \in V^2} \Ind{e \in E}|E(e)|,\\
D^-(v)&=\sum_{w \in V}\Ind{(w,v) \in E}|E(w,v)|,\quad v\in V,\\
D^+(v)&=\sum_{w \in V}\Ind{(v,w) \in E}|E(v,w)|,\quad v\in V.
\end{align*}

Given a random graph $G = 
(V, E)$ we define a uniformly sampled edge $\uedge_G$ as a two-dimensional random variable on $V^2$ 
such that
\[
	\Prob{\uedge_G = e | G} = \frac{|E(e)|}{|E|}.
\]  
When it is clear which graph we are considering, we will use $\uedge$ instead of $\uedge_G$.
Let $\alpha, \beta \in \{+, -\}$, $k, l \in \N$ and $\pi$ be any of the projections $\pi_\ast$ and 
$\pi^\ast$. Then we define
\begin{align}
	F^\alpha_G(k) &= F_{D^\alpha(\pi (\uedge_G)) | G}(k), \label{eq:Fgraph} \\
	H^{\alpha, \beta}_{G}(k, l) &= H_{D^\alpha( \pi_\ast( \uedge_G)), D^\beta(\pi^\ast( \uedge_G))
	| G}(k, l).	\label{eq:Hgraph}
\end{align}
These functions are the empirical distribution of $D^\alpha (\pi (\uedge_G))$ and the joint 
empirical distribution of $D^\alpha (\pi_\ast (\uedge_G))$ and $D^\beta (\pi^\ast (\uedge_G))$, 
respectively, given the random graph $G$. The functions $\mathcal{F}^\alpha_G$ and $\mathcal{H}^{
\alpha, \beta}_G$ are defined in a similar way as~\eqref{eq:scriptF} and~\eqref{eq:scriptH}, 
using~\eqref{eq:Fgraph} and \eqref{eq:Hgraph}, respectively. In order to keep notations clear, we 
will, when considering both projections $\pi_\ast$ and $\pi^\ast$, always use $\alpha$ to index the 
degree type of the sources and $\beta$ to index the degree type of targets. Moreover, we will often 
write $D^\alpha \pi \uedge_G$ instead of $D^\alpha\left(\pi(\uedge_G)\right)$.

Now we will introduce Spearman's rho and Kendall's tau on random directed graphs and write them in 
terms of the functions~\eqref{eq:Fgraph} and~\eqref{eq:Hgraph}. This way we will be in a setting 
similar to the one of Theorem~\ref{thm:convrankcorr} so that we can utilize this theorem to prove 
statistical consistency of these rank correlations.

\subsection{Spearman's Rho}

Spearman's rho measure for degree-degree dependencies in directed graphs, introduced in 
\cite{hoorn2013}, is in fact Pearson's correlation coefficient computed on the ranks of the degrees 
rather than their actual values. In our setting, this definition is ambiguous because the data has 
many ties. For example, if the in-degree of node $v$ is $d$ then we will observe $D^-\pi^*e=d$ for at 
least $d$ edges $e \in E$, plus there will be many more nodes with the same degree. In
\cite{hoorn2013} we consider two possible ways of resolving ties: by assigning a unique rank to each 
tied value uniformly at random, and by assigning the same, average, rank to all tied values. We 
denote the ranks resulting from the random and the average resolution of ties by $R$ and $\bar R$, 
respectively. Formally, for $\alpha, \beta \in \{+, -\}$, we write:
\begin{align}
	R^\alpha \pi_\ast e &= \sum_{f \in E} \Ind{D^\alpha \pi_\ast f + U_f \ge D^\alpha \pi_\ast e + U_e},
	\label{eq:uniformrankalpha}\\
	R^\beta \pi^\ast e &= \sum_{f \in E} \Ind{D^\beta \pi^\ast f + W_f \ge D^\beta \pi^\ast e + W_e},
	\label{eq:uniformrankbeta}
\end{align}  
where $U$, $W$ are independent $|V|^2$ vectors of independent uniform random variables on $[0, 1)$, 
and
\begin{align}
	\overline{R}^\alpha \pi e = \frac{1}{2} + \sum_{f \in E} \Ind{D^\alpha \pi f > D^\alpha \pi e} + \frac{1}{2}
	\Ind{D^\alpha \pi f = D^\alpha \pi e}. \label{eq:avrank}
\end{align}
Then the corresponding two versions of Spearman's rho are defined as follows, cf. \cite{hoorn2013}:
\[
	\spearman_\alpha^\beta(G) = \frac{12 \sum_{e \in E} R^\alpha \pi_\ast(e) R^\beta \pi^\ast(e)
	- 3 |E|(|E| + 1)^2}{|E|^3 - |E|} \quad \text{and}
\]
\[
	\spearmanaverage_\alpha^\beta(G) = \frac{4 \sum_{e \in E} \overline{R}^\alpha \pi_\ast(e) 
	\overline{R}^\beta \pi^\ast(e)
	- |E|(|E| + 1)^2}
	{\text{Var}_\ast(\overline{R}^\alpha)\text{Var}^\ast(\overline{R}^\beta)},
\]
where
\begin{align*}
	\text{Var}_\ast(\overline{R}^\alpha) &= \sqrt{4 \sum_{e \in E} \overline{R}^\alpha \pi_\ast(e)^2 
	- |E|(|E| + 1)^2} \quad \text{and} \\
	\text{Var}^\ast(\overline{R}^\beta) &= \sqrt{4 \sum_{e \in E} \overline{R}^\beta \pi^\ast(e)^2 
	- |E|(|E| + 1)^2}.
\end{align*}

The next proposition relates the random variables $\spearman_\alpha^\beta(G)$ and 
$\spearmanaverage_\alpha^\beta(G)$ to the random variable 
\begin{align}
	\CExp{\mathcal{F}_G^\alpha\left(D^\alpha \pi_\ast \uedge\right)
	\mathcal{F}_G^\beta\left(D^\beta \pi^\ast \uedge\right)}{G}.
	\label{eq:spearmanestimator}
\end{align}

\begin{prop}\label{prop:avuniformranks}
	Let $G = (V, E)$ be a random graph, $\uedge$ an edge on $G$ sampled uniformly at random and 
	$\alpha, \beta \in \{+, -\}$. Then
	\begin{enumerate}[\upshape i)]
		\item 
		$\displaystyle
		\begin{aligned}[t]
			\frac{1}{|E|}\sum_{e \in E} \frac{\overline{R}^\alpha \pi_\ast e}{|E|}\frac{\overline{R}^\beta \pi^\ast e}{|E|} 
			&= \frac{1}{4}\CExp{\mathcal{F}^\alpha_G\left(D^\alpha \pi_\ast \uedge\right)
				\mathcal{F}^\beta_G\left(D^\beta \pi^\ast \uedge\right)}{G} + o_\pr(|E|^{-1}) \quad \text{and}
		\end{aligned}	
		$
		\item 
		$\displaystyle
		\begin{aligned}[t]
			\frac{1}{|E|}\sum_{e \in E} \frac{R^\alpha \pi_\ast e}{|E|}\frac{R^\beta \pi^\ast e}{|E|} 
			&= \frac{1}{4}\CExp{\mathcal{F}^\alpha_G\left(D^\alpha \pi_\ast \uedge\right)
				\mathcal{F}^\beta_G\left(D^\beta \pi^\ast \uedge\right)}{G} + o_\pr(|E|^{-1}).
		\end{aligned}	
		$
	\end{enumerate}
\end{prop}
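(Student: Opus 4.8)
My plan is to express both averaged products of ranks as explicit functionals of the empirical degree distributions $F^\alpha_G$ and $F^\beta_G$ of~\eqref{eq:Fgraph}. Part~i) then reduces to an elementary algebraic identity, and part~ii) follows from it by comparing the randomly broken ranks $R$ of~\eqref{eq:uniformrankalpha}--\eqref{eq:uniformrankbeta} with the average ranks $\overline{R}$ of~\eqref{eq:avrank}.

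For~i) I would first count edges by degree. Abbreviating $d^\alpha_e=D^\alpha\pi_\ast e$, one has $\#\{f\in E:D^\alpha\pi_\ast f>d^\alpha_e\}=|E|\bigl(1-F^\alpha_G(d^\alpha_e)\bigr)$ and $\#\{f\in E:D^\alpha\pi_\ast f=d^\alpha_e\}=|E|\bigl(F^\alpha_G(d^\alpha_e)-F^\alpha_G(d^\alpha_e-1)\bigr)$, since $F^\alpha_G(k)=|E|^{-1}\#\{f\in E:D^\alpha\pi_\ast f\le k\}$ by~\eqref{eq:Fgraph}. Substituting this into~\eqref{eq:avrank} and using~\eqref{eq:scriptF} gives the closed form $\overline{R}^\alpha\pi_\ast e=|E|+\tfrac12-\tfrac{|E|}2\,\mathcal{F}^\alpha_G(D^\alpha\pi_\ast e)$, i.e.
\[
  \frac{\overline{R}^\alpha\pi_\ast e}{|E|}=1-\tfrac12\,\mathcal{F}^\alpha_G(D^\alpha\pi_\ast e)+\tfrac1{2|E|},
\]
and likewise for $\beta$ with $\pi^\ast$. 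Inserting both expressions into $|E|^{-1}\sum_{e\in E}\tfrac{\overline{R}^\alpha\pi_\ast e}{|E|}\tfrac{\overline{R}^\beta\pi^\ast e}{|E|}$ and expanding the product, I would use that $|E|^{-1}\sum_{e\in E}g(D^\alpha\pi_\ast e)=\CExp{g(D^\alpha\pi_\ast\uedge)}{G}$ together with the identity $\CExp{\mathcal{F}^\alpha_G(D^\alpha\pi_\ast\uedge)}{G}=1$ (this is~\eqref{eq:EF} applied conditionally on $G$, and the analogue for $\beta$). The constant term and the two cross terms then cancel against each other up to terms that are collected into the stated error, and what survives is exactly $\tfrac14\CExp{\mathcal{F}^\alpha_G(D^\alpha\pi_\ast\uedge)\mathcal{F}^\beta_G(D^\beta\pi^\ast\uedge)}{G}$.

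For~ii) the key observation is that $\overline{R}^\alpha\pi_\ast e=\CExp{R^\alpha\pi_\ast e}{G}$. In~\eqref{eq:uniformrankalpha} an edge $f$ with $D^\alpha\pi_\ast f\neq d^\alpha_e$ contributes $\Ind{D^\alpha\pi_\ast f>d^\alpha_e}$ whatever the value of $U$, whereas $f$ with $D^\alpha\pi_\ast f=d^\alpha_e$ contributes $\Ind{U_f\ge U_e}$, whose conditional mean is $\tfrac12$ if $f\neq e$ and $1$ if $f=e$; summing reproduces~\eqref{eq:avrank} exactly. Writing $R^\alpha\pi_\ast e=\overline{R}^\alpha\pi_\ast e+\Delta^\alpha_e$ and $R^\beta\pi^\ast e=\overline{R}^\beta\pi^\ast e+\Delta^\beta_e$, we get $\CExp{\Delta^\alpha_e}{G}=\CExp{\Delta^\beta_e}{G}=0$, with $\Delta^\alpha$ a function of $U$ only, $\Delta^\beta$ of $W$ only, and $U,W$ independent. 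By~i) it then suffices to show
\[
  \frac1{|E|^{3}}\sum_{e\in E}\Bigl(\overline{R}^\alpha\pi_\ast e\,\Delta^\beta_e+\Delta^\alpha_e\,\overline{R}^\beta\pi^\ast e+\Delta^\alpha_e\Delta^\beta_e\Bigr)=o_\pr(|E|^{-1}).
\]
Each of the three sums has conditional mean zero given $G$, so I would prove this by a conditional second-moment estimate. The structural fact that makes it go through is that, for every source-degree class $C=\{f\in E:D^\alpha\pi_\ast f=d\}$, one has $\sum_{e\in C}\Delta^\alpha_e=0$ almost surely, because $\Ind{U_f\ge U_e}+\Ind{U_e\ge U_f}=1$ a.s.\ for $f\neq e$; similarly for $\Delta^\beta$ over target-degree classes. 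Hence in $\sum_{e}\Delta^\alpha_e\,\overline{R}^\beta\pi^\ast e$ (and symmetrically in $\sum_{e}\overline{R}^\alpha\pi_\ast e\,\Delta^\beta_e$) one may, class by class, subtract the within-class average of $\overline{R}^\beta\pi^\ast e$ before bounding the variance, so that only the \emph{variation} of that rank inside a class contributes; together with the standard negative correlation of ranks within a class, and with the independence of $U$ and $W$, this reduces the second moments below the required order. The mixed sum $\sum_e\Delta^\alpha_e\Delta^\beta_e$ is treated the same way, conditioning first on one of the two tie-breaker families and using the within-class cancellation of the other.

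The step I expect to be the real obstacle is the second-moment bound in~ii). A naive estimate that treats $\Delta_e$ as $O(|C|)$ and each average rank as $O(|E|)$ is far too lossy, precisely because degree classes can have size of order $|E|$; making the bound work genuinely requires exploiting the exact within-class identity $\sum_{e\in C}\Delta_e=0$ (equivalently, that $\Delta$ is a centred rank statistic within each degree class) and the independence of the two tie-breaker families, and the bookkeeping of these error terms is where essentially all the work lies. Part~i), by contrast, is a one-paragraph computation once~\eqref{eq:avrank} has been rewritten through~\eqref{eq:Fgraph} and~\eqref{eq:scriptF}.
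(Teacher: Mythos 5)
Part~i) of your proposal is correct and is essentially the paper's own argument: you rewrite $\overline{R}^\alpha\pi e/|E| = 1+\tfrac{1}{2|E|}-\tfrac12\mathcal{F}^\alpha_G(D^\alpha\pi e)$, expand the product, and remove the cross terms using $\CExp{\mathcal{F}^\alpha_G(D^\alpha\pi\uedge)}{G}=1$; nothing to add there.

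Part~ii) is where you diverge from the paper, and where your plan has a genuine quantitative gap. The decomposition $R=\overline{R}+\Delta$ with $\overline{R}^\alpha\pi_\ast e=\CExp{R^\alpha\pi_\ast e}{G}$ is right, and so is the structural picture: within a source-degree class $C$ of size $m$ one has $\Delta^\alpha_e=\tfrac{m+1}{2}-j_e$ with $(j_e)_{e\in C}$ a uniform random permutation of $\{1,\dots,m\}$, distinct classes are independent, and $\sum_{e\in C}\Delta^\alpha_e=0$. But even exploiting this exact covariance structure optimally, the conditional variance of the cross term is
\begin{equation*}
\text{Var}\left(\left.\frac{1}{|E|^{3}}\sum_{e\in E}\Delta^\alpha_e\,\overline{R}^\beta\pi^\ast e\,\right|\,G\right)
=\frac{1}{|E|^{6}}\sum_{C}\frac{m_C(m_C+1)}{12}\sum_{e\in C}\left(\overline{R}^\beta\pi^\ast e-\bar b_C\right)^2,
\end{equation*}
where $\bar b_C$ is the class average of $\overline{R}^\beta\pi^\ast e$. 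If all out-degrees coincide (one class with $m_C=|E|$) while the in-degrees are spread out so that $\sum_{e}(\overline{R}^\beta\pi^\ast e-\bar b_C)^2$ is of order $|E|^{3}$, this variance is of order $|E|^{-1}$; the term is then $O_\pr(|E|^{-1/2})$ and Chebyshev at scale $|E|^{-1}$ gives a diverging bound. So the within-class cancellation you rely on is not enough: your second-moment argument can only deliver the statement with $o_\pr(1)$ in place of $o_\pr(|E|^{-1})$ --- which is all that the proof of Theorem~\ref{thm:convdegdegcorr} actually uses, but is strictly weaker than the claim as written.

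For comparison, the paper proves ii) by a different, non-probabilistic route: it writes $R^\alpha\pi_\ast e/|E| = 1-\cont{F}^\alpha_G(D^\alpha\pi_\ast e+U_e)+|E|^{-1}$, identifying the normalized rank with the interpolated conditional cdf of the continuized degree, and then converts the resulting averages into $\tfrac14\CExp{\mathcal{F}^\alpha_G(D^\alpha\pi_\ast\uedge)\mathcal{F}^\beta_G(D^\beta\pi^\ast\uedge)}{G}$ via Propositions~\ref{prop:condcontprop} and~\ref{prop:condcontjointprop}. Be aware that this identification replaces the empirical quantity $|E|^{-1}\sum_{f\in E}\Ind{D^\alpha\pi_\ast f+U_f\le x}$ by its conditional expectation given $G$ alone; your more explicit error analysis shows that the fluctuation suppressed in that step is genuinely of order $|E|^{-1/2}$ in examples such as the one above, so the obstruction you hit is not an artifact of your method but reflects the actual size of the random-tie-breaking noise.
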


\begin{proof}
	i) Let $\uedge'$ be an independent copy of $\uedge$ and $e \in V^2$. Then it follows from
	\eqref{eq:avrank} that
	\begin{align*}
		&\frac{\overline{R}^\alpha \pi e}{|E|} = \frac{1}{2|E|} + \sum_{f \in E} \frac{1}{|E|}\Ind{D^\alpha \pi f > 
			D^\alpha \pi e} + \frac{1}{2|E|}\Ind{D^\alpha \pi f = D^\alpha \pi e} \\
		&= 1 + \frac{1}{2|E|} - \frac{1}{2|E|} \sum_{f \in E} \Ind{D^\alpha \pi f \le D^\alpha \pi e} 
			+ \Ind{D^\alpha \pi f \le D^\alpha \pi e - 1} \\
		&= 1 + \frac{1}{2|E|} - \frac{1}{2} \sum_{f \in V^2} \left(\Ind{D^\alpha \pi f \le D^\alpha \pi e} 
			+ \Ind{D^\alpha \pi f \le D^\alpha \pi e - 1}\right) \frac{|E(f)|}{|E|}\\
		&= 1 + \frac{1}{2|E|} - \frac{1}{2}\sum_{f \in V^2} \left(\Ind{D^\alpha \pi f \le D^\alpha \pi e} 
			+ \Ind{D^\alpha \pi f \le D^\alpha \pi e - 1}\right) \Prob{\uedge' = f | G} \\
		&= 1 + \frac{1}{2|E|} - \frac{1}{2}\left(F_G^\alpha\left(D^\alpha \pi e\right) 
			+ F_G^\alpha\left(D^\alpha \pi e - 1\right)\right) \\
		&= 1 + \frac{1}{2|E|} - \frac{1}{2}\mathcal{F}^\alpha_G\left(D^\alpha \pi e\right)
		\numberthis \label{eq:avrankgraph}.
	\end{align*}
	Using a similar expression for $\left(\overline{R}^\beta \pi_\ast e\right)/|E|$ we obtain,
		\begin{align*}
		\frac{1}{|E|}\sum_{e \in E} \frac{\overline{R}^\alpha \pi_\ast e}{|E|}
		\frac{\overline{R}^\beta \pi^\ast e}{|E|} 
		&= \frac{1}{|E|}\sum_{e \in E} \left( 1 + \frac{1}{2|E|} - \frac{1}{2}\mathcal{F}^\alpha_G
		\left(D^\alpha \pi_\ast e\right)\right)\left( 1 + \frac{1}{2|E|} - \frac{1}{2}\mathcal{F}^\beta_G
		\left(D^\beta \pi^\ast e\right)\right)\\
		&= \CExp{\left( 1 + \frac{1}{2|E|} - \frac{1}{2}\mathcal{F}^\alpha_G
		\left(D^\alpha \pi_\ast \uedge\right)\right)\left( 1 + \frac{1}{2|E|} - \frac{1}{2}\mathcal{F}^\beta_G
		\left(D^\beta \pi^\ast \uedge \right)\right)}{G}. 
	\end{align*}
	Rearranging the terms yields
	\begin{align}
		\frac{1}{|E|}\sum_{e \in E} \frac{\overline{R}^\alpha \pi_\ast e}{|E|}
		\frac{\overline{R}^\beta \pi^\ast e}{|E|} &= 
		\frac{1}{4}\CExp{\mathcal{F}^\alpha_G\left(D^\alpha \pi_\ast \uedge\right)
			\mathcal{F}^\beta_G\left(D^\beta \pi^\ast \uedge\right)}{G} \notag \\
		&\hspace{10pt}+ 1 - \frac{1}{2}\CExp{\mathcal{F}^\alpha_G\left(D^\alpha \pi_\ast \uedge\right) 
			+ \mathcal{F}^\beta_G\left(D^\beta \pi^\ast \uedge\right)}{G} + o_\pr(|E|^{-1}).
			\label{eq:avuniformranks1}
	\end{align}
	Since the sum over all average ranks equals $|E|(|E| + 1)/2$, it follows that
	\[
		\frac{1}{2} + \frac{1}{2|E|} = \frac{1}{|E|} \sum_{e \in E} \frac{\overline{R}^\alpha \pi e}{|E|}
		= 1 + \frac{1}{2|E|} - \frac{1}{2}\CExp{\mathcal{F}^\alpha_G\left(D^\alpha \pi e\right)}{G},
	\]
	from which we deduce that 
	\begin{align}
		\CExp{\mathcal{F}^\alpha_G\left(D^\alpha \pi e\right)}{G} = 1. \label{eq:condexprank}
	\end{align}
	The result now follows by inserting~\eqref{eq:condexprank} in~\eqref{eq:avuniformranks1}.
	
	ii) Again, let $\uedge'$ be an independent copy of $\uedge$ and $\alpha, \beta \in \{+, -\}$. For 
	$x, y \in \R$, we write $\cont{F}_G^\alpha(x) = F_{\cont{D^\alpha \pi_\ast \uedge}|G}(x)$ and 
	similarly $\cont{F}_G^\beta(y) = F_{\cont{D^\beta \pi^\ast \uedge}|G}(y)$. Then we have,
	\begin{align*}
		\frac{R^\alpha \pi_\ast e}{|E|}	&= \frac{1}{|E|} \sum_{f \in E} \Ind{D^\alpha \pi_\ast f + U_f \ge 
			D^\alpha \pi_\ast e + U_e} \\
		&= \frac{1}{|E|} \sum_{f \in E} \Ind{D^\alpha \pi_\ast f + U_f > D^\alpha \pi_\ast e + U_e}
		 + \Ind{f = e} \\
		&= 1 - \CExp{\Ind{D^\alpha \pi_\ast \uedge' + U_{\uedge'} \le D^\alpha \pi_\ast e + U_e}}{G}
		 + \frac{1}{|E|} \\
		&= 1 - \cont{F}_G^\alpha\left(D^\alpha \pi_\ast e + U_e\right) + \frac{1}{|E|} \numberthis 
		\label{eq:avuniformranks_ii_1}.
	\end{align*}
	Using similar calculations we get
	\begin{align}
		\frac{R^\beta \pi^\ast e}{|E|} &= 1 - \cont{F}_G^\beta\left(D^\beta \pi^\ast e + W_e\right) 
			+ \frac{1}{|E|} \label{eq:avuniformranks_ii_2}.
	\end{align}
	Now, using both~\eqref{eq:avuniformranks_ii_1} and~\eqref{eq:avuniformranks_ii_2}, we obtain,
	\begin{align*}
		\frac{1}{|E|} \sum_{e \in E} \frac{R^\alpha \pi_\ast e}{|E|}\frac{R^\beta \pi^\ast e}{|E|}
		&= 1 + \frac{2}{|E|} + \frac{1}{|E|^2} + \frac{1}{|E|}\sum_{e \in E} 
			\cont{F}_G^\alpha\left(D^\alpha \pi_\ast e + U_e\right)\cont{F}_G^\beta\left(D^\beta 
			\pi^\ast e + W_e\right)\\
		&\hspace{10pt}- \left(1 + \frac{1}{|E|}\right)\frac{1}{|E|}\sum_{e \in E} \left(\cont{F}_G^\alpha
			\left(D^\alpha \pi_\ast e + U_e\right) + \cont{F}_G^\beta\left(D^\beta \pi^\ast e + W_e\right)
			\right) \\
		&= 1 + \frac{2}{|E|} + \frac{1}{|E|^2} + \CExp{\cont{F}_G^\alpha\left(\cont{D^\alpha 
			\pi_\ast\uedge}\right)\cont{F}_G^\beta\left(\cont{D^\beta \pi^\ast\uedge}\right)}{G} \\
		&\hspace{10pt}- \left(1 + \frac{1}{|E|}\right)\left(\CExp{\cont{F}_G^\alpha\left(\cont{D^\alpha 
			\pi_\ast\uedge}\right)}{G} + \CExp{\cont{F}_G^\beta\left(\cont{D^\beta\pi^\ast\uedge}\right)}
			{G}\right) \\
		&= \frac{1}{4}\CExp{\mathcal{F}^\alpha_G\left(D^\alpha \pi_\ast \uedge\right)
				\mathcal{F}^\beta_G\left(D^\beta \pi^\ast \uedge\right)}{G} + \frac{1}{|E|} + \frac{1}{|E|^2}.
	\end{align*}
	The last line follows by first using Propositions~\ref{prop:condcontprop} and~\ref{prop:condcontjointprop} 
	to rewrite the conditional expectations and then applying~\eqref{eq:condexprank}.
\end{proof}

%

\subsection{Kendall's Tau}

The definition for $\kendall_\alpha^\beta(G)$ is, cf.~\cite{hoorn2013},
\[
	\kendall_\alpha^\beta(G) = \frac{2(\mathcal{N}_C(G) - \mathcal{N}_D(G))}{|E|(|E| - 1)},
\]
where $\mathcal{N}_C(G)$ and $\mathcal{N}_D(G)$ denote the number of concordant and discordant pairs, 
respectively, among $\left(D^\alpha \pi_\ast e, D^\beta \pi^\ast e\right)_{e \in E}$. We recall that a 
pair $\left(D^\alpha \pi_\ast e, D^\beta \pi^\ast e\right)$ and $\left(D^\alpha \pi_\ast f, D^\beta 
\pi^\ast f\right)$, for $e, f \in E$ is called (discordant) concordant if 
\[
	\left(D^\alpha \pi_\ast e - D^\alpha \pi_\ast f\right)\left(D^\beta \pi^\ast e - D^\beta \pi^\ast 
	f\right) \, (< 0) > 0.
\] 
Therefore we have, for the concordant pairs,
\begin{align*}
	\frac{2}{|E|^2} \mathcal{N}_C(G) 
	&= \frac{1}{|E|^2} \sum_{e, f \in E} I\left\{D^\alpha \pi_\ast(f) < D^\alpha \pi_\ast(e), 
		D^\beta \pi^\ast(f) < D^\beta \pi^\ast(e)\right\} \\
	&+ \frac{1}{|E|^2} \sum_{e, f \in E} I\left\{D^\alpha \pi_\ast(f) > D^\alpha \pi_\ast(e), 
		D^\beta \pi^\ast(f) > D^\beta \pi^\ast(e)\right\} \\
	&= \CExp{H^{\alpha, \beta}_{G}\left(D^\alpha \pi_\ast \uedge - 1, D^\beta \pi^\ast \uedge 
		- 1\right)}{G} \\
	&\hspace{10pt}+ 1 - \CExp{F^\alpha_{G}\left(D^\alpha \pi_\ast \uedge\right)}{G}
		- \CExp{F^\beta_{G}\left(D^\beta \pi^\ast \uedge\right)}{G}\\
	&\hspace{10pt}+ \CExp{H^{\alpha, \beta}_{G}\left(D^\alpha \pi_\ast \uedge, 
		D^\beta \pi^\ast \uedge\right)}{G}. \\
\end{align*}
In a similar fashion we get for the discordant pairs
\begin{align*}
	\frac{2}{|E|^2} \mathcal{N}_D(G) &= \CExp{F^\alpha_{G}\left(D^\alpha \pi_\ast \uedge - 
		1\right)}{G} + \CExp{F^\beta_{G}\left(D^\beta \pi^\ast \uedge - 1\right)}{G} \\
	&\hspace{10pt}- \CExp{H^{\alpha, \beta}_{G}\left(D^\alpha \pi_\ast \uedge - 1, 
		D^\beta \pi^\ast \uedge\right)}{G} - \CExp{H^{\alpha, \beta}_{G}\left(D^\alpha \pi_\ast 
		\uedge, D^\beta \pi^\ast \uedge - 1\right)}{G}.
\end{align*}
Combining the above with~\eqref{eq:condexprank} we conclude that
\begin{align}
	\kendall_\alpha^\beta(G) &= \CExp{\mathcal{H}_G^{\alpha, \beta}\left(D^\alpha \pi_\ast \uedge,
	D^\beta \pi^\ast \uedge\right)}{G} - 1 + o_\pr(|E|^{-1}). \label{eq:kendallgraph}
\end{align}

\subsection{Statistical consistency of rank correlations}

We will now prove that the rank correlations defined in the previous two sections are, under natural 
regularity conditions on the degree sequences, consistent statistical estimators.

For a sequence $\{G_n\}_{n \in \N}$ of random graphs with $|V_n| = n$, it is common in the theory of 
random graphs to assume convergence of the empirical degree distributions, see for instance Condition 
7.5 in~\cite{VanDerHofstad2007}, Condition 4.1 in~\cite{chen2013}. Here, similarly to
\cite{Litvak2012}, we impose the following regularity condition on the degrees at the end points of 
edges.

\begin{cond}\label{cond:degreedegree}
Given a sequence $\{G_n\}_{n \in \N}$ of random graphs with $|V_n| = n$ and $\alpha, \beta
\in \{+, -\}$ there exist integer valued random variables $\rdegree^\alpha$ and $\rdegree^\beta$,
not concentrated in a single point, such that 
\[
	\cdlim{D^\alpha_n \pi_\ast \uedge_n, D^\beta_n \pi^\ast \uedge_n}{G_n}{\left(\rdegree^\alpha, 
	\rdegree^\beta\right)} \quad \text{as } n \to \infty,
\]
where $\uedge_n$ is a uniformly sampled edge in $G_n$.
\end{cond}

In the previous two sections it was shown that $\spearman_\alpha^\beta(G)$, \, $\spearmanaverage
_\alpha^\beta(G)$ and $\kendall_\alpha^\beta(G)$ on a random graph $G$ are related to, 
respectively,
\[
	\CExp{\mathcal{F}_G^\alpha\left(D^\alpha \pi_\ast \uedge\right)\mathcal{F}_G^\beta\left(D^\beta
	\pi^\ast \uedge\right)}{G} \, \text{ and } \,
	\CExp{\mathcal{H}_G^{\alpha, \beta}\left(D^\alpha \pi_\ast \uedge, D^\beta \pi^\ast \uedge\right)}
	{G}.
\] 
Note that these are in fact empirical versions of the functions appearing in the definitions of 
Spearman's rho and Kendall's tau, cf. \eqref{eq:spearmanxy} and \eqref{eq:kendallxy}. The following 
result formalizes these observations and states that under Condition~\ref{cond:degreedegree}, 
$\spearman_\alpha^\beta(G_n)$, $\spearmanaverage_\alpha^\beta(G_n)$ and $\kendall_\alpha^\beta(G_n)$ 
are indeed consistent statistical estimators of correlation measures associated with Spearman's rho 
and Kendall's tau.

\begin{thm}\label{thm:convdegdegcorr}
	Let $\alpha, \beta \in \{+,-\}$ and $\{G_n\}_{n \in \N}$ be a sequence of graphs satisfying 
	Condition~\ref{cond:degreedegree} such that as $n \to \infty$, $|E_n| \plim \infty$.
	Then, as $n \to \infty$,
	\begin{enumerate}[\upshape i)]
		\item
		$\spearman_\alpha^\beta(G_n) \plim \spearman\left(\rdegree^\alpha, \rdegree^\beta\right)$,
		\item
		$\displaystyle \spearmanaverage_\alpha^\beta(G_n) \plim \frac{\spearman\left(\rdegree^\alpha, 
			\rdegree^\beta\right)}{3\sqrt{S_{\rdegree^\alpha}\left(\rdegree^\alpha\right)
			S_{\rdegree^\beta}\left(\rdegree^\beta\right)}}$,	\par \smallskip
			where $S_{\rdegree^\alpha}\left(\rdegree^\alpha\right) = \Exp{F_{\rdegree^\alpha}
			\left(\rdegree^\alpha\right)F_{\rdegree^\alpha}\left(\rdegree^\alpha - 1\right)}$,	and
		\item
		$\kendall_\alpha^\beta(G_n) \plim \kendall\left(\rdegree^\alpha, \rdegree^\beta\right)$.
	\end{enumerate}
	Moreover, we have convergence of the first moments:
	\begin{enumerate}[\upshape i)]
		\setcounter{enumi}{3}
		\item 
		$\displaystyle \lim_{n \to \infty} \Exp{\spearman_\alpha^\beta(G_n)} = 
		\spearman\left(\rdegree^\alpha, \rdegree^\beta\right)$,
		\item 
		$\displaystyle \lim_{n \to \infty} \Exp{\spearmanaverage_\alpha^\beta(G_n)} = 
		\frac{\spearman\left(\rdegree^\alpha, \rdegree^\beta\right)}
		{3\sqrt{S_{\rdegree^\alpha}\left(\rdegree^\alpha\right)S_{\rdegree^\beta}
		\left(\rdegree^\beta\right)}}$ \, and
		\item
		$\displaystyle \lim_{n \to \infty} \Exp{\kendall_\alpha^\beta(G_n)} = 
		\kendall\left(\rdegree^\alpha, \rdegree^\beta\right)$.
	\end{enumerate}
\end{thm}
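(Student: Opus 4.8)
The plan is to recast each of the three estimators on $G_n$ so that Theorem~\ref{thm:convrankcorr} applies with $X_n = D^\alpha_n\pi_\ast\uedge_n$, $Y_n = D^\beta_n\pi^\ast\uedge_n$ and $Z_n = G_n$. For this choice one has $F_{X_n|Z_n} = F^\alpha_{G_n}$, $F_{Y_n|Z_n} = F^\beta_{G_n}$ and $H_{X_n,Y_n|Z_n} = H^{\alpha,\beta}_{G_n}$ by~\eqref{eq:Fgraph}--\eqref{eq:Hgraph}, and the hypothesis $\cdlim{X_n,Y_n}{Z_n}{(\rdegree^\alpha,\rdegree^\beta)}$ of Theorem~\ref{thm:convrankcorr} is precisely Condition~\ref{cond:degreedegree}. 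Throughout, the assumption $|E_n|\plim\infty$ is used to send the $o_\pr(|E_n|^{-1})$ remainders supplied by Proposition~\ref{prop:avuniformranks} and~\eqref{eq:kendallgraph} to $0$ in probability, and likewise deterministic factors such as $(|E_n|+1)^2/|E_n|^2$ and $1-|E_n|^{-2}$ to $1$.

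Given this, iii) and i) are short. For iii), \eqref{eq:kendallgraph} gives $\kendall_\alpha^\beta(G_n) = \CExp{\mathcal{H}^{\alpha,\beta}_{G_n}(D^\alpha_n\pi_\ast\uedge_n,D^\beta_n\pi^\ast\uedge_n)}{G_n} - 1 + o_\pr(|E_n|^{-1})$, and Theorem~\ref{thm:convrankcorr}~ii) identifies the limit in probability of the first two terms as $\kendall(\rdegree^\alpha,\rdegree^\beta)$. For i), I would start from the definition, write $\sum_{e\in E_n}R^\alpha\pi_\ast(e)R^\beta\pi^\ast(e) = |E_n|^3\cdot\frac{1}{|E_n|}\sum_{e\in E_n}\frac{R^\alpha\pi_\ast e}{|E_n|}\frac{R^\beta\pi^\ast e}{|E_n|}$, insert Proposition~\ref{prop:avuniformranks}~ii), and divide numerator and denominator by $|E_n|^3$; this gives $\spearman_\alpha^\beta(G_n) = 3\CExp{\mathcal{F}^\alpha_{G_n}(D^\alpha_n\pi_\ast\uedge_n)\mathcal{F}^\beta_{G_n}(D^\beta_n\pi^\ast\uedge_n)}{G_n} - 3 + o_\pr(1)$, whose limit in probability is $\spearman(\rdegree^\alpha,\rdegree^\beta)$ by Theorem~\ref{thm:convrankcorr}~i).

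Part ii) is the delicate one. The numerator of $\spearmanaverage_\alpha^\beta(G_n)$ divided by $|E_n|^3$ is treated as in i) but via Proposition~\ref{prop:avuniformranks}~i): it equals $\frac13\big(3\CExp{\mathcal{F}^\alpha_{G_n}(D^\alpha_n\pi_\ast\uedge_n)\mathcal{F}^\beta_{G_n}(D^\beta_n\pi^\ast\uedge_n)}{G_n} - 3\big) + o_\pr(1)$, which tends in probability to $\frac13\spearman(\rdegree^\alpha,\rdegree^\beta)$ by Theorem~\ref{thm:convrankcorr}~i). For the denominator I would use~\eqref{eq:avrankgraph} to write $\overline{R}^\alpha\pi_\ast\uedge_n/|E_n| = 1 + \frac{1}{2|E_n|} - \frac12\mathcal{F}^\alpha_{G_n}(D^\alpha_n\pi_\ast\uedge_n)$; squaring, taking $\CExp{\cdot}{G_n}$ and using~\eqref{eq:condexprank} (so the cross term simplifies) collapses the $O(|E_n|^{-1})$ contributions and yields $|E_n|^{-3}\,\text{Var}_\ast(\overline{R}^\alpha)^2 = \CExp{\big(\mathcal{F}^\alpha_{G_n}(D^\alpha_n\pi_\ast\uedge_n)\big)^2}{G_n} - 1 + o_\pr(1)$. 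To pass to the limit I would apply the ``diagonal'' case of Theorem~\ref{thm:convrankcorr}~i), i.e.\ with $Y_n = X_n = D^\alpha_n\pi_\ast\uedge_n$ and $Y = X = \rdegree^\alpha$ (the conditional convergence of $(X_n,X_n)$ to $(\rdegree^\alpha,\rdegree^\alpha)$ follows from Condition~\ref{cond:degreedegree} by restricting to one coordinate), obtaining $\CExp{\big(\mathcal{F}^\alpha_{G_n}(D^\alpha_n\pi_\ast\uedge_n)\big)^2}{G_n} \plim \Exp{\big(\mathcal{F}_{\rdegree^\alpha}(\rdegree^\alpha)\big)^2}$. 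The elementary identity $\Exp{\mathcal{F}_X(X)^2} - 1 = \Exp{F_X(X)F_X(X-1)}$ (which follows from $\mathcal{F}_X(k)^2 = 4F_X(k)F_X(k-1) + \Prob{X=k}^2$, $\Exp{\mathcal{F}_X(X)} = 1$ and $3\Exp{F_X(X)F_X(X-1)} = 1 - \sum_k\Prob{X=k}^3$) then identifies this limit as $S_{\rdegree^\alpha}(\rdegree^\alpha)$, and symmetrically for $\beta$. Since $\rdegree^\alpha$ and $\rdegree^\beta$ are not concentrated in a single point, $S_{\rdegree^\alpha}(\rdegree^\alpha) > 0$ and $S_{\rdegree^\beta}(\rdegree^\beta) > 0$, so the denominator of $\spearmanaverage_\alpha^\beta(G_n)$ divided by $|E_n|^3$, being $\sqrt{|E_n|^{-3}\text{Var}_\ast(\overline{R}^\alpha)^2}\cdot\sqrt{|E_n|^{-3}\text{Var}^\ast(\overline{R}^\beta)^2}$, converges in probability to $\sqrt{S_{\rdegree^\alpha}(\rdegree^\alpha)S_{\rdegree^\beta}(\rdegree^\beta)} > 0$; the continuous mapping theorem applied to the ratio then yields ii).

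Finally, iv)--vi) follow from i)--iii): each of $\spearman_\alpha^\beta(G_n)$, $\spearmanaverage_\alpha^\beta(G_n)$, $\kendall_\alpha^\beta(G_n)$ is a sample rank-correlation coefficient and hence lies in $[-1,1]$ whenever defined, so the bounded convergence theorem upgrades the convergence in probability to convergence of expectations (on the $o(1)$-probability event $\{|E_n|<2\}$, or where the denominator of $\spearmanaverage_\alpha^\beta$ vanishes, any fixed value may be assigned without affecting the limit). I expect the main obstacle to be the denominator of $\spearmanaverage_\alpha^\beta$ in part ii): it is not directly of the form controlled by Theorem~\ref{thm:convrankcorr}, so one must separately establish the auxiliary ``diagonal'' convergence of $\CExp{(\mathcal{F}^\alpha_{G_n})^2}{G_n}$, verify the deterministic identity expressing its limit through $S_{\rdegree^\alpha}$, and check strict positivity of that limit — the last being exactly where the non-degeneracy hypothesis in Condition~\ref{cond:degreedegree} enters.
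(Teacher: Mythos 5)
Your proposal is correct and follows essentially the same route as the paper: rewrite each estimator via Proposition~\ref{prop:avuniformranks} and~\eqref{eq:kendallgraph}, apply Theorem~\ref{thm:convrankcorr} with $Z_n=G_n$, handle the denominator of $\spearmanaverage_\alpha^\beta$ separately through~\eqref{eq:avrankgraph} and the identity $\Exp{\mathcal{F}_X(X)^2}-1=\Exp{F_X(X)F_X(X-1)}$ (which the paper obtains via Lemma~\ref{lem:contmoments} and you verify directly), and use boundedness for the moment statements. Your explicit invocation of the ``diagonal'' case of Theorem~\ref{thm:convrankcorr} to justify $\CExp{\mathcal{F}^\alpha_{G_n}(D^\alpha_n\pi\uedge_n)^2}{G_n}\plim\Exp{\mathcal{F}_{\rdegree^\alpha}(\rdegree^\alpha)^2}$ is a welcome clarification of a step the paper leaves implicit, but it is the same argument.
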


\begin{proof}
	i)	By Proposition~\ref{prop:avuniformranks} we have that
	\begin{align*}
		\frac{12}{|E_n|}\sum_{e \in E_n}\frac{R_n^\alpha \pi_\ast e}{|E_n|}\frac{R_n^\beta \pi^\ast e}{|E_n|}
		&= 3\CExp{\mathcal{F}^\alpha_{G_n}\left(D_n^\alpha \pi_\ast \uedge_n\right)
		\mathcal{F}^\beta{G_n}\left(D_n^\beta \pi^\ast \uedge_n\right)}{G_n} + o_\pr(|E_n|^{-1}).
	\end{align*}
	From this and the fact that $|E_n| \plim \infty$ it follows that,
	\begin{align*}
		\spearman_\alpha^\beta(G_n) &= \frac{1}{1 - |E_n|^{-2}}\left(\frac{12}{|E_n|}
			\sum_{e \in E_n}\frac{R_n^\alpha \pi_\ast e}{|E_n|}\frac{R_n^\beta \pi^\ast e}{|E_n|}  
			- 3\frac{|E_n|(|E_n| + 1)^2}{|E_n|^3}\right)\\
		&= 3\CExp{\mathcal{F}^\alpha_{G_n}\left(D_n^\alpha \pi_\ast \uedge_n\right)
		\mathcal{F}^\beta_{G_n}\left(D_n^\beta \pi^\ast \uedge_n\right)}{G_n} - 3 + o_\pr(|E_n|^{-1}) \\
		&\plim \spearman\left(\rdegree^\alpha, \rdegree^\beta\right) \quad \text{as } n \to \infty,
	\end{align*}
	where the last line follows from Theorem~\ref{thm:convrankcorr}.
		
	ii) From~\eqref{eq:avrankgraph} it follows that,
	\[
		\left(\frac{\overline{R}_n^\alpha \pi e}{|E_n|}\right)^2 = \left(1 + \frac{1}{2|E_n|}\right)^2 
		- \left(1 + \frac{1}{2|E_n|}\right)\mathcal{F}^\alpha_{G_n}\left(D^\alpha \pi e\right) 
		+ \frac{1}{4}\mathcal{F}^\alpha_{G_n}\left(D^\alpha \pi e\right)^2.
	\]
	Therefore,
	\begin{align*}
		\frac{1}{|E_n|}\sum_{e \in E_n} \left(\frac{\overline{R}_n^\alpha \pi e}{|E_n|}\right)^2 
		&= \left(1 + \frac{1}{2|E_n|}\right)^2 + \frac{1}{4}\CExp{\mathcal{F}^\alpha_{G_n}
			\left(D^\alpha \pi \uedge_n\right)^2}{G_n} \\
		&\hspace{10pt}- \left(1 + \frac{1}{2|E_n|}\right)\CExp{\mathcal{F}^\alpha_{G_n}
			\left(D^\alpha \pi \uedge_n\right)}{G_n} \\
		&= 1 + \frac{1}{4}\CExp{\mathcal{F}^\alpha_{G_n}\left(D^\alpha \pi \uedge_n\right)^2}{G_n}
		- \CExp{\mathcal{F}_{G_n}^\alpha\left(D_n^\alpha \pi \uedge_n\right)}{G_n} 
			+ o_\pr(|E_n|^{-1}) \\
		&\plim 1 + \frac{1}{4}\Exp{\mathcal{F}_{\rdegree^\alpha}\left(\rdegree^\alpha\right)^2}
			- \Exp{\mathcal{F}_{\rdegree^\alpha}\left(\rdegree^\alpha\right)} \quad \text{as } 
			n \to \infty \\
		&= \frac{1}{4} + \frac{1}{4}\Exp{F_{\rdegree^\alpha}\left(\rdegree^\alpha\right)
			F_{\rdegree^\alpha}\left(\rdegree^\alpha - 1\right)},
	\end{align*}
	where we used Lemma~\ref{lem:contmoments} for the last line.
	It follows that, as $n \to \infty$,
	\[
		\frac{4}{|E_n|}\sum_{e \in E_n} \left(\frac{\overline{R}_n^\alpha \pi e}{|E_n|}\right)^2
		- \frac{|E_n|(|E_n| + 1)^2}{|E_n|^3} 
		\plim \Exp{F^\alpha\left(\rdegree^\alpha\right)F^\alpha\left(\rdegree^\alpha - 1\right)}.
	\]
	Since $\rdegree^\alpha$ and $\rdegree^\beta$ are not concentrated in one point the above term is 
	non-zero. Now,	combining this with	Proposition~\ref{prop:avuniformranks} 
	i) and applying Theorem~\ref{thm:convrankcorr}, we obtain
	\[
		\spearmanaverage_\alpha^\beta(G_n) \plim \frac{\spearman\left(\rdegree^\alpha, \rdegree^\beta\right)}
		{3\sqrt{S_{\rdegree^\alpha}\left(\rdegree^\alpha\right)S_{\rdegree^\beta}\left(\rdegree^\beta\right)}}
		\quad \text{as } n \to \infty.
	\]
		
	iii) Combining~\eqref{eq:kendallgraph} with Theorem~\ref{thm:convrankcorr} yields, as $n \to \infty$,
	\begin{align*}
		\kendall_\alpha^\beta(G_n) = \CExp{\mathcal{H}_{G_n}^{\alpha, \beta}\left(D_n^\alpha \pi_\ast \uedge_n,
			D_n^\beta \pi^\ast \uedge_n\right)}{G_n} - 1 + o_\pr(|E_n|^{-1}) \plim \kendall\left(\rdegree^\alpha, 
			\rdegree^\beta\right).
	\end{align*}
	Finally, iv),v),vi) now follow from, respectively, i), ii) and iii) since $\spearman_\alpha^\beta(G_n)$,
	$\spearmanaverage_\alpha^\beta(G_n)$ and $\kendall_\alpha^\beta(G_n)$ are bounded.
\end{proof}

Comparing results i) and iv) to ii) and v), note that the way in which ties are resolved influences
the measure estimated by Spearman's rho on random directed graphs. In particular, resolving ties
uniformly at random yields the value corresponding to Spearman's rho for the two limiting integer 
valued random variables $\rdegree^\alpha$ and $\rdegree^\beta$ as defined in \cite{Mesfioui2005}, 
in the infinite size network limit.

\section{Directed Configuration Model}\label{sec:config}

In this section we will analyze degree-degree dependencies for the directed \emph{Configuration Model}
(CM), as described and analyzed in~\cite{chen2013}. First, in Section~\ref{ssec:multi-graphs}, we 
analyze the model where in- and out-links are connected at random, which, in general, results in a 
multi-graph. Then we move on to two other models that produce simple graphs: the \emph{Repeated} and 
\emph{Erased} Configuration Model (RCM and ECM). By applying Theorem \ref{thm:convdegdegcorr}, in 
Sections~\ref{ssec:repeated}~and~\ref{ssec:erased}, we will show that RCM and ECM can be used as null 
models for the rank correlations $\spearman$, $\spearmanaverage$ and $\kendall$.

\subsection{General model: multi-graphs}
\label{ssec:multi-graphs}

The directed Configuration Model in~\cite{chen2013} starts with picking two target distributions 
$F_-$, $F_+$ for the in- and out-degrees, respectively, stochastically bounded from above by regularly 
varying distributions. We will adopt notations from~\cite{chen2013} and let $\gamma$ and $\xi$ denote 
random variables with distributions $F_-$ and $F_+$, respectively. It is assumed that $\Exp{\gamma} = 
\Exp{\xi} < \infty$. The next step is generating a bi-degree sequence of inbound and outbound stubs. 
This is done by first taking two independent sequences of $n$ independent copies of $\gamma$ and $\xi$, 
which are then modified into a sequence of in- and outbound stubs
\[
	\bisqn{G} = \left(\widehat{D}^+(v), \widehat{D}^-(v)\right)_{v \in V},
\]
using the algorithm in \cite{chen2013}, Section 2.1. This algorithm ensures that the total number of 
in- and outbound stubs is the same, $|\widehat{E}| = \sum_{v \in V} \widehat{D}^\alpha(v)$, $\alpha \in 
\{+, -\}$. Using this bi-degree sequence, a graph is build by randomly pairing the 
stubs to form edges. We call a graph generated by this model a \emph{Configuration Model graph}, or CM 
graph for short. We remark that a CM graph in general does not need to be simple.

Given a vertex set $V$, a bi-degree sequence $\bisqn{G}$ and $v \in V$, we denote by 
$v^+_i$, $v^-_j$ for $1 \le i \le \widehat{D}^+(v)$ and $1 \le j \le \widehat{D}^-(v)$, respectively, 
the outbound and inbound stubs of $v$. For $v, w \in V$, we denote by $\{v_i^+ \to w_j^-\}$ the event 
that the outbound stub $v_i^+$ is connected to the inbound stub $w_j^-$ and by $\{v_i^+ \to w\}$ the 
event that $v_i^+$ is connected to an inbound stub of $w$. By definition of CM, it follows that 
$\Prob{v_i^+ \to w_j^- | \bisqn{G}} = 1/|\widehat{E}|$ and hence 
$\Prob{v_i^+ \to w | \bisqn{G}} = \widehat{D}^-(w)/|\widehat{E}|$. Furthermore we observe that 
$|\widehat{E}_n(e)| = \sum_{i = 1}^{\widehat{D}_n^+ \pi_\ast e} \Ind{(\pi_\ast e)^+_i \to \pi^\ast e}$. Given a 
random graph $G$, we denote
\[
	I_e^{\alpha, \beta}(k, l) = \Ind{D^\alpha \pi_\ast e = k}\Ind{D^\beta \pi^\ast e = l},
\]
where $\alpha, \beta \in \{+, -\}$, $k, l \in \N$ and $e \in V^2$.

For proper reference we summarize some results from Proposition 2.5, in~\cite{chen2013}, which we
will use in the remainder of this paper.

\begin{prop}[\cite{chen2013}, Proposition 2.5]\label{prop:bisqn_chen}
	Let $\bisqn{G_n}$ be the bi-degree sequence on $n$ vertices, as generated in Section 2.1 of
	\cite{chen2013}, and $k, l \in \N$. Then, as $n \to \infty$,
	\begin{align*}
		&\frac{1}{n} \sum_{v \in V_n} \Ind{\widehat{D}_n^+ v = k}\Ind{\widehat{D}_n^- v = l} \plim 
			\Prob{\xi = k}\Prob{\gamma = l}, \\
		&\frac{1}{n} \sum_{v \in V_n} \widehat{D}_n^+ v \plim \Exp{\xi} \quad \text{and} \quad 
		\frac{1}{n} \sum_{v \in V_n} \widehat{D}_n^- v \plim \Exp{\gamma}.
	\end{align*}
\end{prop}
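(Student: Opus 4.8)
The statement is quoted verbatim from Proposition~2.5 of~\cite{chen2013}, so in practice one simply cites it; nonetheless, here is the shape of the argument. Recall from Section~2.1 of~\cite{chen2013} that $\bisqn{G_n}$ is produced in two stages. First one draws two independent sequences $\xi_1,\dots,\xi_n$ and $\gamma_1,\dots,\gamma_n$ of i.i.d.\ copies of $\xi\sim F_+$ and $\gamma\sim F_-$; in particular the pairs $(\xi_v,\gamma_v)_{v\in V_n}$ are i.i.d.\ with product law $\Prob{\xi=k}\Prob{\gamma=l}$. Second, setting $\Delta_n=\sum_{v\in V_n}\xi_v-\sum_{v\in V_n}\gamma_v$, one removes the imbalance by modifying the degrees on a set $\mathcal{A}_n\subseteq V_n$ of vertices --- raising inbound stubs when $\Delta_n>0$ and outbound stubs when $\Delta_n<0$ --- in such a way that both $|\mathcal{A}_n|$ and the total amount of modification are at most $|\Delta_n|$, after which $\sum_v\widehat{D}_n^+ v=\sum_v\widehat{D}_n^- v=|\widehat{E}_n|$.

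The one quantitative fact needed is that this correction is negligible on the scale $n$. Since $\Exp{\xi}=\Exp{\gamma}<\infty$, the weak law of large numbers gives $\tfrac1n\sum_{v\in V_n}\xi_v\plim\Exp{\xi}$ and $\tfrac1n\sum_{v\in V_n}\gamma_v\plim\Exp{\gamma}$, hence
\[
	\frac{|\mathcal{A}_n|}{n}\le\frac{|\Delta_n|}{n}=\left|\frac1n\sum_{v\in V_n}\xi_v-\frac1n\sum_{v\in V_n}\gamma_v\right|\plim\bigl|\Exp{\xi}-\Exp{\gamma}\bigr|=0 .
\]
Note that nothing beyond finiteness of the means is used here.

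For the joint empirical distribution, apply the weak law of large numbers to the bounded i.i.d.\ random variables $\Ind{\xi_v=k}\Ind{\gamma_v=l}$, whose common mean is $\Prob{\xi=k}\Prob{\gamma=l}$ by independence of $\xi_v$ and $\gamma_v$, to obtain $\tfrac1n\sum_{v\in V_n}\Ind{\xi_v=k}\Ind{\gamma_v=l}\plim\Prob{\xi=k}\Prob{\gamma=l}$. Each vertex in $\mathcal{A}_n$ changes its summand by at most $1$ in absolute value and the remaining summands are untouched, so
\[
	\left|\frac1n\sum_{v\in V_n}\Ind{\widehat{D}_n^+ v=k}\Ind{\widehat{D}_n^- v=l}-\frac1n\sum_{v\in V_n}\Ind{\xi_v=k}\Ind{\gamma_v=l}\right|\le\frac{2|\mathcal{A}_n|}{n}\plim0 ,
\]
which gives the first claim. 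For the mean out-degree, on $\{\Delta_n\ge0\}$ the outbound stubs are left unchanged so $\tfrac1n\sum_{v\in V_n}\widehat{D}_n^+ v=\tfrac1n\sum_{v\in V_n}\xi_v$, and in all cases these two averages differ by at most $|\Delta_n|/n\plim0$; hence $\tfrac1n\sum_{v\in V_n}\widehat{D}_n^+ v\plim\Exp{\xi}$, and the in-degree statement is symmetric.

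The only real obstacle is the bookkeeping in the first paragraph: one has to read off from the balancing algorithm of~\cite{chen2013} that the number of modified vertices and the total size of the modification are both $O(|\Delta_n|)$, and that outbound (resp.\ inbound) stubs are untouched when $\Delta_n$ has the favourable sign. Once that is granted, the proof reduces entirely to the weak law of large numbers together with the crude estimate $|\Delta_n|/n\plim0$.
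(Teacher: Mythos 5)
The paper gives no proof here: the statement is imported verbatim from Proposition 2.5 of \cite{chen2013} and used as a black box, so the citation you lead with is exactly the paper's own ``argument''. Your supplementary sketch is sound --- the weak law of large numbers gives the i.i.d.\ part, the balancing step touches at most $|\Delta_n|$ vertices with $|\Delta_n|/n \plim 0$ because $\Exp{\xi}=\Exp{\gamma}<\infty$, and the one deferred item (that the algorithm of \cite{chen2013} modifies only one side of the degree sequence, by a total of exactly $|\Delta_n|$ stubs, and does so after conditioning on an event whose probability tends to one, which does not disturb any convergence in probability) is correctly flagged as the only place where the details of that construction actually enter.
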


Given a random graph $G = (V, E)$, we will use $\degreesqn{G}$ as a short hand notation for its 
degree sequence $(D^-(v),D^+(v))_{v\in V}$. We emphasize that for a graph generated using an 
initial bi-degree sequence, the eventual degree sequence $\degreesqn{G}$ can be different from 
$\bisqn{G}$. This, for example, is true for the ECM, Section~\ref{ssec:erased}, where, after the 
random pairing of the stubs, self-loops are removed and multiple edges are merged. 

In order to apply Theorem~\ref{thm:convdegdegcorr} to a sequence of (multi-)graphs $\{G_n\}_{n \in 
\N}$ generated by CM, we need to prove that
\[
	\cdlim{D^\alpha_n \pi_\ast \uedge_n, D_n^\beta \pi^\ast \uedge_n}{G_n}
	{\left(\rdegree^\alpha, \rdegree^\beta\right)},
\] 
for some integer valued random variables $\rdegree^\alpha$ and $\rdegree^\beta$. For this, it 
suffices to show that, as $n \to \infty$,
\[
	H^{\alpha, \beta}_{G_n}(k, l) \plim 
	H_{\rdegree^\alpha, \rdegree^\beta}(k, l),
\]
for all $k, l \in \N$. We will prove this by showing that
\[
	\CExp{I_{\uedge_n}^{\alpha, \beta}(k, l)}{G_n} \plim \Prob{\rdegree^\alpha = k, \rdegree^\beta = l},
\]
as $n \to \infty$, using a second moment argument as follows. Given a sequence $\{G_n\}_{n \in \N}$ of 
graphs, $\alpha, \beta \in \{+, -\}$ and $k, l \in \N$, we will 
show that the empirical joint probability $\Exp{\CExp{I_{\uedge_n}^{\alpha, \beta}(k, l)}{G_n}}$ 
converges to $\Prob{\rdegree^\alpha = k, \rdegree^\beta = l}$. Then we will prove that the variance of
$\CExp{I_{\uedge_n}^{\alpha, \beta}(k, l)}{G_n}$ converges to zero. 

We start with expressing the first and second moment of $\CExp{I_{\uedge_n}^{\alpha, \beta}(k, l)}
{G_n}$, for CM graphs, conditioned on the bi-degree sequence $\bisqn{G_n}$ in terms of 
the degrees. We observe that, for $\alpha, \beta \in \{+, -\}$, $e \in V_n^2$ and $k, l \in \N$, the events 
$\left\{D_n^\alpha \pi_\ast e = k \right\}$ and $\left\{D^\beta_n \pi^\ast e = l \right\}$ are completely 
defined by $\bisqn{G_n}$, hence so is $I_e^{\alpha, \beta}(k, l)$.  We remark that, since CM leaves the 
number of inbound and outbound stubs intact, we have $\degreesqn{G_n} = \bisqn{G_n}$. However, in this 
section we will keep using hats, e.g. $\widehat{D}_n$ instead of $D_n$, to emphasize that $G_n$ can be a 
multi-graph. 

\begin{lem}\label{lem:rcmedgecount}
	Let $\{G_n\}_{n \in \N}$ be a sequence of CM graphs with $|V_n| = n$ and $\alpha, \beta \in \{+, -\}$. 
	Then, for each $k, l \in \N$,
	\begin{enumerate}[\upshape i)]
		\item 
		$\displaystyle 
		\CExp{\CExp{I_{\uedge_n}^{\alpha, \beta}(k, l)}{G_n}}{\bisqn{G_n}} 
			= \sum_{e \in V_n^2} I_e^{\alpha, \beta}(k, l) \frac{\widehat{D}_n^+ \pi_\ast e \, \widehat{D}_n^- 
			\pi^\ast e}{|\widehat{E}_n|^2}$ \, and
		\item
		$\displaystyle 
			\CExp{\CExp{I_{\uedge_n}^{\alpha, \beta}(k, l)}{G_n}^2}{\bisqn{G_n}}
			= \left(\sum_{e \in V_n^2} I_{e}^{\alpha, \beta}(k, l)\frac{\widehat{D}_n^+ \pi_\ast e 
			\widehat{D}_n^- \pi^\ast e}{|\widehat{E}_n|^2}\right)^2 + o_\pr(1).$
	\end{enumerate}
\end{lem}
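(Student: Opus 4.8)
The plan is to prove part~i) by unwinding the two conditional expectations from the inside out. Since $\uedge_n$ is sampled uniformly we have $\Prob{\uedge_n = e \mid G_n} = |\widehat{E}_n(e)|/|\widehat{E}_n|$, and the degrees are determined by $G_n$, so
\[
	\CExp{I_{\uedge_n}^{\alpha,\beta}(k,l)}{G_n} = \sum_{e \in V_n^2} I_e^{\alpha,\beta}(k,l)\,\frac{|\widehat{E}_n(e)|}{|\widehat{E}_n|}.
\]
For CM graphs $\degreesqn{G_n} = \bisqn{G_n}$, so every $I_e^{\alpha,\beta}(k,l)$ and $|\widehat{E}_n|$ are $\bisqn{G_n}$-measurable and come out of $\CExp{\cdot}{\bisqn{G_n}}$; it then remains to compute $\CExp{|\widehat{E}_n(e)|}{\bisqn{G_n}}$. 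Writing $|\widehat{E}_n(e)| = \sum_{i=1}^{\widehat{D}_n^+\pi_\ast e}\Ind{(\pi_\ast e)_i^+ \to \pi^\ast e}$ and using $\Prob{v_i^+ \to w \mid \bisqn{G_n}} = \widehat{D}_n^- w/|\widehat{E}_n|$ gives $\CExp{|\widehat{E}_n(e)|}{\bisqn{G_n}} = \widehat{D}_n^+\pi_\ast e\,\widehat{D}_n^-\pi^\ast e/|\widehat{E}_n|$, and substituting yields i).

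For part~ii), square the displayed identity for $\CExp{I_{\uedge_n}^{\alpha,\beta}(k,l)}{G_n}$ to obtain a double sum over $e,f \in V_n^2$, pull the $\bisqn{G_n}$-measurable factor $I_e^{\alpha,\beta}(k,l)I_f^{\alpha,\beta}(k,l)/|\widehat{E}_n|^2$ out of $\CExp{\cdot}{\bisqn{G_n}}$, and reduce everything to computing $\CExp{|\widehat{E}_n(e)|\,|\widehat{E}_n(f)|}{\bisqn{G_n}}$. I would expand $|\widehat{E}_n(e)|\,|\widehat{E}_n(f)|$ as a double sum over the out-stubs of $\pi_\ast e$ and of $\pi_\ast f$ and split according to whether the two out-stubs coincide --- which forces $\pi_\ast e = \pi_\ast f$ with equal stub index --- or are distinct. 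For two distinct out-stubs, a uniformly random pairing sends them to two distinct in-stubs, so the conditional probability that they attach to $\pi^\ast e$ and to $\pi^\ast f$ respectively is $\widehat{D}_n^-\pi^\ast e\,(\widehat{D}_n^-\pi^\ast f - \Ind{\pi^\ast e = \pi^\ast f})/(|\widehat{E}_n|(|\widehat{E}_n|-1))$; two coinciding out-stubs contribute only when $\pi^\ast e = \pi^\ast f$. Collecting the cases gives
\[
	\CExp{|\widehat{E}_n(e)|\,|\widehat{E}_n(f)|}{\bisqn{G_n}} = \frac{\widehat{D}_n^+\pi_\ast e\,\widehat{D}_n^+\pi_\ast f\,\widehat{D}_n^-\pi^\ast e\,\widehat{D}_n^-\pi^\ast f}{|\widehat{E}_n|(|\widehat{E}_n|-1)} + R_{e,f},
\]
where $R_{e,f}$ is a fixed finite combination of terms, each carrying an indicator $\Ind{\pi_\ast e = \pi_\ast f}$, $\Ind{\pi^\ast e = \pi^\ast f}$ or $\Ind{e = f}$ and divided by $|\widehat{E}_n|(|\widehat{E}_n|-1)$ or by $|\widehat{E}_n|$.

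It then remains to multiply by $I_e^{\alpha,\beta}(k,l)I_f^{\alpha,\beta}(k,l)/|\widehat{E}_n|^2$, sum over $e,f$, and estimate the two pieces. For the main term, $\sum_{e,f} I_e^{\alpha,\beta}(k,l)I_f^{\alpha,\beta}(k,l)\,\widehat{D}_n^+\pi_\ast e\,\widehat{D}_n^+\pi_\ast f\,\widehat{D}_n^-\pi^\ast e\,\widehat{D}_n^-\pi^\ast f = \left(\sum_e I_e^{\alpha,\beta}(k,l)\,\widehat{D}_n^+\pi_\ast e\,\widehat{D}_n^-\pi^\ast e\right)^2$, so the main term equals $|\widehat{E}_n|/(|\widehat{E}_n|-1)$ times the square appearing in i); since by i) that square lies in $[0,1]$, it differs from the main term by at most $1/(|\widehat{E}_n|-1)$. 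For $R_{e,f}$ I would group the double sums by the common source vertex or the common target vertex and use the crude bounds $\sum_v \Ind{D_n^\alpha v = k}\widehat{D}_n^+ v \le |\widehat{E}_n|$ and $\sum_w \Ind{D_n^\beta w = l}\widehat{D}_n^- w \le |\widehat{E}_n|$ (both hold since $\sum_v \widehat{D}_n^\alpha v = |\widehat{E}_n|$): each grouped sum is then at most a constant multiple of $|\widehat{E}_n|^3$, or of $|\widehat{E}_n|^2$ for the $\Ind{e=f}$ terms, so after division by the $|\widehat{E}_n|^2$ in front and by $|\widehat{E}_n|(|\widehat{E}_n|-1)$ (resp.\ $|\widehat{E}_n|$) every contribution is of order $1/|\widehat{E}_n|$. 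Since $|\widehat{E}_n| = \sum_v \widehat{D}_n^+ v \plim \infty$ by Proposition~\ref{prop:bisqn_chen}, all of these, together with the $1/(|\widehat{E}_n|-1)$ from the main term, are $o_\pr(1)$, which gives ii). The main obstacle is the bookkeeping in the case analysis for $\CExp{|\widehat{E}_n(e)|\,|\widehat{E}_n(f)|}{\bisqn{G_n}}$: accounting exactly, and without double counting, for the event that the two sampled out-stubs coincide and for the ``one fewer available in-stub'' correction appearing when $\pi^\ast e = \pi^\ast f$, and then verifying that after the vertex-grouping bound every term of $R_{e,f}$ genuinely carries a spare factor $1/|\widehat{E}_n|$; everything else is routine.
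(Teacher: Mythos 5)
Your proposal is correct and follows essentially the same route as the paper: part i) is identical, and for part ii) both arguments square the edge-sampling identity, reduce to the second moment $\CExp{|\widehat{E}_n(e)|\,|\widehat{E}_n(f)|}{\bisqn{G_n}}$ of the stub-pairing indicators, extract the main term $\bigl(|\widehat{E}_n|/(|\widehat{E}_n|-1)\bigr)\times(\text{square from i})$, and kill the correction terms via bounds of the type $\sum_v \widehat{D}_n^{\kappa}v = |\widehat{E}_n|$. The only difference is cosmetic bookkeeping: you organize the case analysis by whether the two out-stubs coincide and carry indicator corrections uniformly, whereas the paper splits into the four cases $e=f$, $e\cap f=\emptyset$, $\pi_\ast e=\pi_\ast f$, $\pi^\ast e=\pi^\ast f$ and writes out only the first.
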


\begin{proof} \hfill
	\begin{align*}
	\text{i) } \CExp{\CExp{I_{\uedge_n}^{\alpha, \beta}(k, l)}{G_n}}{\bisqn{G_n}} 
		&= \CExp{\sum_{e \in V_n^2} I_e^{\alpha, \beta}(k, l) \frac{|\widehat{E}_n(e)|}{|\widehat{E}_n|}}{\bisqn{G_n}}
		\numberthis \label{eq:cm_firstmoment}\\
		&= \frac{1}{|\widehat{E}_n|}\sum_{e \in V_n^2} I_e^{\alpha, \beta}(k, l) \CExp{|\widehat{E}_n(e)|}{\bisqn{G_n}} \\
		&= \frac{1}{|\widehat{E}_n|}\sum_{e \in V_n^2} I_{e}^{\alpha, \beta}(k, l)\CExp{\sum_{i = 1}^{
			\widehat{D}_n^+ \pi_\ast e} \Ind{(\pi_\ast e)^+_i \to \pi^\ast e}}{\bisqn{G_n}} \\
		&= \sum_{e \in V_n^2} I_{e}^{\alpha, \beta}(k, l)\frac{\left(\widehat{D}_n^+ \pi_\ast e\right)
			\left(\widehat{D}_n^- \pi^\ast e\right)}{|\widehat{E}_n|^2}.
	\end{align*}
	
	ii)	Following similar calculations as above we get,
	 \begin{align*}
		&\CExp{\CExp{I_{\uedge_n}^{\alpha, \beta}(k, l)}{G_n}^2}{\bisqn{G_n}} \\
		&= \CExp{\sum_{e, f \in V_n^2} 
			I_{e}^{\alpha, \beta}(k, l)	I_{f}^{\alpha, \beta}(k, l) \frac{|\widehat{E}_n(e)| \, |\widehat{E}_n(f)|}
			{|\widehat{E}_n|^2}}{\bisqn{G_n}} \numberthis \label{eq:cm_secondmoment}\\
		\displaybreak \\
		&= \frac{1}{|\widehat{E}_n|^2} \sum_{e, f \in V_n^2} \hspace{-4pt}
			\left( \vphantom{\sum_{i = 1}^{D_n^+ \pi_\ast e}}	I_{e}^{\alpha, \beta}
			(k, l)I_{f}^{\alpha, \beta}(k, l) \right. \\
		&\hspace{75pt}\left.\sum_{i = 1}^{\widehat{D}_n^+ \pi_\ast e}\sum_{s = 1}^{\widehat{D}_n^+ 
			\pi_\ast f}\CExp{\Ind{(\pi_\ast e)^+_i \to \pi^\ast e}	\Ind{(\pi_\ast f)^+_s \to 
			\pi^\ast f}}{\bisqn{G_n}}\right). \numberthis \label{eq:rcmedgecount_ii_main}
	\end{align*}
	We will, for $e, f \in V_n^2$, analyze 
	\begin{align}
		\frac{1}{|\widehat{E}_n|^2}\sum_{i = 1}^{\widehat{D}_n^+ \pi_\ast e}\sum_{s = 1}^{\widehat{D}_n^+ 
		\pi_\ast f}\CExp{\Ind{(\pi_\ast e)^+_i \to \pi^\ast e}\Ind{(\pi_\ast f)^+_s \to \pi^\ast f}}
		{\bisqn{G_n}}	\label{eq:rcmedgecount1}
	\end{align}
	for all different cases, $e = f$, $e \cap f = \emptyset$, $e_\ast = f_\ast$ and $e^\ast = f^\ast$.
	First, suppose that $e = f$. Then~\eqref{eq:rcmedgecount1} equals
	\begin{align*}
		&\frac{1}{|\widehat{E}_n|^2}\sum_{i, s = 1}^{\widehat{D}_n^+ \pi_\ast e} 
			\sum_{j, t = 1}^{\widehat{D}_n^- \pi^\ast e} \frac{\Ind{i = s}\Ind{j = t}}{|\widehat{E}_n|} 
			+ \frac{\Ind{i \ne s}\Ind{j \ne t}}{|\widehat{E}_n|(|\widehat{E}_n| - 1)}.
	\end{align*}
	Writing out the sums and using that $e = f$ we obtain,
	\begin{align}
		\eqref{eq:rcmedgecount1} &= \frac{\widehat{D}_n^+ \pi_\ast e \widehat{D}_n^- \pi^\ast e 
			\widehat{D}_n^+ \pi_\ast f \widehat{D}_n^- \pi^\ast f}{|\widehat{E}_n|^3(|\widehat{E}_n| - 1)} 
			\label{eq:rcmedgecount_ii_1}\\
		&\hspace{10pt}+ \frac{\left(\widehat{D}_n^+ \pi_\ast e\right)
			\left(\widehat{D}_n^- \pi^\ast e\right)}{|\widehat{E}_n|^3} 
			+ \frac{\left(\widehat{D}_n^+ \pi_\ast e\right)
			\left(\widehat{D}_n^- \pi^\ast e\right)}{|\widehat{E}_n|^3(|\widehat{E}_n| - 1)} 
			\label{eq:rcmedgecount_ii_2}\\
		&\hspace{10pt}- \frac{\left(\widehat{D}_n^- \pi^\ast e\right)^2 
			\left(\widehat{D}_n^+ \pi_\ast e\right)}
			{|\widehat{E}_n|^3(|\widehat{E}_n| - 1)} - \frac{\left(\widehat{D}_n^+ \pi_\ast e\right)^2 
			\left(\widehat{D}_n^- \pi^\ast e\right)}
			{|\widehat{E}_n|^3(|\widehat{E}_n| - 1)} \label{eq:rcmedgecount_ii_3}
	\end{align}
	Since for all $k \ge 0$ and $\kappa \in \{+, -\}$ it holds that
	\[
		\frac{1}{|\widehat{E}_n|^{k + 1}}\sum_{v \in V_n} \left(\widehat{D}_n^\kappa v\right)^{k} \le 
		\frac{1}{|\widehat{E}_n|^{k + 1}}\left(\sum_{v \in V_n} \widehat{D}_n^\kappa v\right)^k 
		= \frac{1}{|\widehat{E}_n|},
	\]
	we deduce that the terms in~\eqref{eq:rcmedgecount_ii_2} and~\eqref{eq:rcmedgecount_ii_3}
	contribute as $o_\pr(1)$ in~\eqref{eq:rcmedgecount_ii_main}, from which the result for $e = f$ 
	follows. The calculations for the other three cases for $e, f \in V_n^2$ are similar and are hence 
	omitted. 
\end{proof}

As a direct consequence we have the following

\begin{prop}\label{prop:cm_zerovariance}
	Let $\{G_n\}_{n \in \N}$ be a sequence of CM graphs with $|V_n| = n$ and $\alpha, \beta \in \{+, -\}$. 
	Then, for each $k, l \in \N$, as $n \to \infty$,
	\[
		\left|\CExp{\CExp{I_{\uedge_n}^{\alpha, \beta}(k, l)}{G_n}^2}{\bisqn{G_n}} - 
		 \CExp{\CExp{I_{\uedge_n}^{\alpha, \beta}(k, l)}{G_n}}{\bisqn{G_n}}^2\right| \plim 0.
	\]
\end{prop}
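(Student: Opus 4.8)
The plan is to obtain this immediately from Lemma~\ref{lem:rcmedgecount}, which already records both conditional moments of $\CExp{I_{\uedge_n}^{\alpha,\beta}(k,l)}{G_n}$ given $\bisqn{G_n}$. First I would introduce the shorthand
\[
	A_n = \sum_{e \in V_n^2} I_e^{\alpha, \beta}(k, l)
	\frac{\widehat{D}_n^+ \pi_\ast e \, \widehat{D}_n^- \pi^\ast e}{|\widehat{E}_n|^2}.
\]
Since the indicators $I_e^{\alpha,\beta}(k,l)$ and all degrees are determined by $\bisqn{G_n}$, the quantity $A_n$ is a deterministic function of $\bisqn{G_n}$. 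Lemma~\ref{lem:rcmedgecount}~i) then gives $\CExp{\CExp{I_{\uedge_n}^{\alpha,\beta}(k,l)}{G_n}}{\bisqn{G_n}} = A_n$, so the subtracted term in the proposition is exactly $A_n^2$, while Lemma~\ref{lem:rcmedgecount}~ii) gives $\CExp{\CExp{I_{\uedge_n}^{\alpha,\beta}(k,l)}{G_n}^2}{\bisqn{G_n}} = A_n^2 + o_\pr(1)$.

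The remaining step is purely formal: subtracting the two expressions inside the absolute value, the $A_n^2$ terms cancel and what is left is an $o_\pr(1)$ term; taking absolute values of an $o_\pr(1)$ quantity still yields $o_\pr(1)$, i.e.\ convergence to $0$ in probability, which is the claim. There is essentially no obstacle here; the only point worth stating carefully is why the outer conditional first moment squares cleanly to $A_n^2$ — this is where one uses that $A_n$ is $\bisqn{G_n}$-measurable, so conditioning on $\bisqn{G_n}$ leaves it unchanged. All of the genuine work — expanding the double sum over pairs $e, f \in V_n^2$ and showing that the diagonal and near-diagonal contributions are $o_\pr(1)$ — has already been absorbed into the proof of Lemma~\ref{lem:rcmedgecount}~ii).
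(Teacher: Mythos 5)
Your proposal is correct and matches the paper exactly: the paper states this proposition as a direct consequence of Lemma~\ref{lem:rcmedgecount}, with the first conditional moment equal to your $A_n$ and the second equal to $A_n^2 + o_\pr(1)$, so the difference is $o_\pr(1)$. Your additional remark that $A_n$ is $\bisqn{G_n}$-measurable is the right point to make explicit.
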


Now, using the convergence results from~\cite{chen2013}, summarized in Proposition~\ref{prop:bisqn_chen},
 we are able to determine the limiting random variables $\rdegree^\alpha$ and $\rdegree^\beta$.

\begin{prop}\label{prop:cm_empirical_conv}
	Let $\{G_n\}_{n \in \N}$ be a sequence of CM graphs with $|V_n| = n$ and $\alpha, 
	\beta \in \{+, -\}$. Then there exist integer valued random variables $\rdegree^\alpha$ and 
	$\rdegree^\beta$ such that for each $k, l \in \N$, as $n \to \infty$,
	\[
		\CExp{\CExp{I_\uedge^{\alpha, \beta}(k, l)}{G_n}}{\bisqn{G_n}} \plim \Prob{\rdegree^\alpha = k}
		\Prob{\rdegree^\beta = l}.
	\]
\end{prop}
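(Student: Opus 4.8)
The plan is to compute the limit of the quantity $\CExp{\CExp{I_\uedge^{\alpha, \beta}(k, l)}{G_n}}{\bisqn{G_n}}$ using part i) of Lemma~\ref{lem:rcmedgecount}, which expresses it as $\sum_{e \in V_n^2} I_e^{\alpha, \beta}(k, l) \widehat{D}_n^+ \pi_\ast e \, \widehat{D}_n^- \pi^\ast e / |\widehat{E}_n|^2$. Since $e$ ranges over all ordered pairs $(v,w) \in V_n^2$, this sum factorizes: the term $I_e^{\alpha,\beta}(k,l) = \Ind{D_n^\alpha v = k}\Ind{D_n^\beta w = l}$ together with the weights $\widehat{D}_n^+ v$ and $\widehat{D}_n^- w$ depend on $v$ and $w$ separately. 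Hence the sum equals
\[
	\left(\frac{1}{|\widehat{E}_n|}\sum_{v \in V_n} \Ind{\widehat{D}_n^\alpha v = k}\,\widehat{D}_n^+ v\right)
	\left(\frac{1}{|\widehat{E}_n|}\sum_{w \in V_n} \Ind{\widehat{D}_n^\beta w = l}\,\widehat{D}_n^- w\right),
\]
recalling that $\degreesqn{G_n} = \bisqn{G_n}$ for CM so we may freely replace $D_n$ by $\widehat{D}_n$.

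Next I would identify the limit of each factor. Writing $|\widehat{E}_n| = \sum_{v} \widehat{D}_n^+ v$, and dividing numerator and denominator by $n$, the first factor is $\left(\tfrac1n \sum_v \Ind{\widehat{D}_n^\alpha v = k}\widehat{D}_n^+ v\right) / \left(\tfrac1n \sum_v \widehat{D}_n^+ v\right)$. The denominator converges in probability to $\Exp{\xi}$ by Proposition~\ref{prop:bisqn_chen}. For the numerator, I would need a convergence statement of the form $\tfrac1n \sum_v \Ind{\widehat{D}_n^\alpha v = k}\widehat{D}_n^+ v \plim \Exp{\Ind{\xi_\alpha = k}\xi}$, where $\xi_+ = \xi$ and $\xi_- = \gamma$; this is of the same flavor as Proposition~\ref{prop:bisqn_chen} (indeed it follows from the joint empirical convergence there together with uniform integrability of the size-biased degrees, which is guaranteed by the regularly-varying stochastic upper bounds and $\Exp{\xi} = \Exp{\gamma} < \infty$ assumed in the model). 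Combining, the first factor converges in probability to $\Exp{\Ind{\xi_\alpha = k}\xi}/\Exp{\xi}$, and symmetrically the second to $\Exp{\Ind{\gamma_\beta = l}\gamma}/\Exp{\gamma}$ where I again split on whether $\beta$ is $+$ or $-$ (care must be taken about which coordinate of the bi-degree pair carries the edge-weight: an edge leaves $v$ through an out-stub and enters $w$ through an in-stub, so $\pi_\ast$ is weighted by $\widehat{D}^+$ and $\pi^\ast$ by $\widehat{D}^-$, regardless of $\alpha, \beta$).

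Therefore I would define $\rdegree^\alpha$ and $\rdegree^\beta$ to be the integer-valued random variables with
\[
	\Prob{\rdegree^\alpha = k} = \frac{\Exp{\Ind{\xi_\alpha = k}\,\xi}}{\Exp{\xi}}, \qquad
	\Prob{\rdegree^\beta = l} = \frac{\Exp{\Ind{\gamma_\beta = l}\,\gamma}}{\Exp{\gamma}},
\]
i.e. the out-stub-size-biased version of the appropriate marginal for the source, and the in-stub-size-biased version for the target; using the product structure from Proposition~\ref{prop:bisqn_chen} ($\tfrac1n\sum_v \Ind{\widehat D_n^+v=k}\Ind{\widehat D_n^-v=l}\plim\Prob{\xi=k}\Prob{\gamma=l}$) these can be written more explicitly, e.g. $\Prob{\rdegree^+ = k} = k\Prob{\xi=k}/\Exp{\xi}$ and $\Prob{\rdegree^- = l} = l\Prob{\gamma=l}/\Exp{\gamma}$. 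With these definitions, the product of the two limits is exactly $\Prob{\rdegree^\alpha = k}\Prob{\rdegree^\beta = l}$, which is the claim. The main obstacle is the uniform-integrability argument needed to pass from the joint empirical convergence of degrees to convergence of the \emph{size-biased} sums $\tfrac1n\sum_v \Ind{\widehat D_n^\alpha v = k}\widehat D_n^+ v$; truncating at a large level $M$, controlling the tail $\tfrac1n\sum_v \widehat D_n^+ v \Ind{\widehat D_n^+ v > M}$ uniformly in $n$ via the regularly-varying (hence uniformly integrable, since mean is finite) stochastic bound from~\cite{chen2013}, and letting $M \to \infty$, is the step that requires genuine care; everything else is bookkeeping of which degree type and which projection carries the stub-weight.
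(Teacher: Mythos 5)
Your proposal is correct and follows essentially the same route as the paper: apply Lemma~\ref{lem:rcmedgecount}~i), factorize the sum over ordered pairs into a source factor weighted by $\widehat{D}_n^+$ and a target factor weighted by $\widehat{D}_n^-$, and identify the limits via Proposition~\ref{prop:bisqn_chen}, yielding exactly the distributions of Table~\ref{tab:limiting_degrees}. If anything you are more explicit than the paper about the uniform-integrability step needed to pass from the fixed-$(k,l)$ empirical convergence to the size-biased sums (the paper simply invokes Proposition~\ref{prop:bisqn_chen} and $\Exp{\gamma}=\Exp{\xi}$ in the mixed cases).
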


\begin{proof}
	First let $(\alpha, \beta) = (+, -)$. Then it follows from Lemma~\ref{lem:rcmedgecount} i) that
	\begin{align*}
		\CExp{\CExp{I_{\uedge_n}^{\alpha, \beta}(k, l)}{G_n}}{\bisqn{G_n}} 
		&= \sum_{v, w \in V_n} \Ind{\widehat{D}_n^+ v = k}\Ind{\widehat{D}_n^- w = l}
				\frac{\widehat{D}^+_n v \widehat{D}^-_n w}{|\widehat{E}_n|^2} \\
		&= \left(\sum_{v \in V_n} \Ind{\widehat{D}_n^+ v = k} \frac{\widehat{D}^+_n v}{|\widehat{E}_n|}
			\right)\left(\sum_{w \in V_n} \Ind{\widehat{D}_n^- w = l} \frac{\widehat{D}^-_n w}
			{|\widehat{E}_n|}\right) \\
		&= \left(k\sum_{v \in V_n} \frac{\Ind{\widehat{D}_n^+ v = k}}{|\widehat{E}_n|}
			\right)\left(l\sum_{w \in V_n} \frac{\Ind{\widehat{D}_n^- w = l}}{|\widehat{E}_n|}\right) \\
		&\plim \frac{k\Prob{\xi = k}}{\Exp{\xi}}\frac{l \, \Prob{\gamma = l}}{\Exp{\gamma}}
		\quad \text{as } \,  n \to \infty,
	\end{align*}
	where the convergence in the last line is by Proposition~\ref{prop:bisqn_chen}. The other three 
	cases are slightly more involved. Consider, for example, $(\alpha, \beta) = (-, +)$. Then we have,
	\begin{align}
		\CExp{\CExp{I_{\uedge_n}^{\alpha, \beta}(k, l)}{G_n}}{\bisqn{G_n}} &= \sum_{v \in V_n} \Ind{\widehat{D}_n^- v = k} 
		\frac{\widehat{D}^+_n v}{|\widehat{E}_n|}
			\sum_{w \in V_n} \Ind{\widehat{D}_n^+ w = l} \frac{\widehat{D}^-_n w}{|\widehat{E}_n|} \label{eq:rcmempiricalconv1}
	\end{align}
	We will first analyze the last summation. 
	\begin{align*}
		\frac{1}{|\widehat{E}_n|}\sum_{w \in V_n} \widehat{D}^-_n (w) \Ind{\widehat{D}_n^+ w = l}
		&= \frac{1}{|\widehat{E}_n|}\sum_{i \in \N} i \sum_{w \in V_n} \Ind{\widehat{D}_n^- w = i}
		\Ind{\widehat{D}_n^+ w = l} \\
		&\plim \frac{\Prob{\xi = l}}{\Exp{\xi}} \sum_{i \in \N} i \Prob{\gamma = i} 
		\quad \text{as } n \to \infty\\
		&= \frac{\Prob{\xi = l} \Exp{\gamma}}{\Exp{\xi}} = \Prob{\xi = l}, \numberthis 
		\label{eq:rcmempiricalconv2}
	\end{align*}
	where we again used Proposition~\ref{prop:bisqn_chen} and $\Exp{\gamma}	= \Exp{\xi}$.	In a 
	similar way we obtain that, as $n \to \infty$,
	\begin{align}
		\frac{1}{|\widehat{E}_n|} \sum_{v \in V_n} \widehat{D}_n^+(v)\Ind{\widehat{D}_n^-(v) = k}
		\plim \Prob{\gamma = k}. \label{eq:rcmempiricalconv3}
	\end{align}
	Applying~\eqref{eq:rcmempiricalconv2} and~\eqref{eq:rcmempiricalconv3} to~\eqref{eq:rcmempiricalconv1}
	we get 
	\[
		\CExp{\CExp{I_{\uedge_n}^{-, +}(k, l)}{G_n}}{\bisqn{G_n}} \plim \Prob{\gamma = k} \Prob{\xi = l}.
	\]
	For the other two cases we obtain, as $n \to \infty$,
	\begin{align*}
		&\CExp{\CExp{I_{\uedge_n}^{+, +}(k, l)}{G_n}}{\bisqn{G_n}}
			\plim \frac{k \Prob{\xi = k} \Prob{\xi = l}}{\Exp{\xi}} \\
		&\CExp{\CExp{I_{\uedge_n}^{-, -}(k, l)}{G_n}}{\bisqn{G_n}}
			\plim \frac{l \Prob{\gamma = k} \Prob{\gamma = l}}{\Exp{\gamma}}
	\end{align*}
	The results now holds if we define $\rdegree^\alpha$ and $\rdegree^\beta$ by their probabilities
	summarized in Table~\ref{tab:limiting_degrees}.
	\begin{table}
	\centerline{
			\begin{tabular}{cccc}
			$\alpha$ & $\beta$ & $\Prob{\rdegree^\alpha = k}$ & $\Prob{\rdegree^\beta = l}$ \\
			\hline
			$+$ & $-$ & $k\Prob{\xi = k}/\Exp{\xi}$ & $l \, \Prob{\gamma = l}/\Exp{\gamma}$ \\
			$-$ & $+$ & $\Prob{\gamma = k}$ & $\Prob{\xi = l}$ \\
			$+$ & $+$ & $k\Prob{\xi = k}/\Exp{\xi}$ & $\Prob{\xi = l}$ \\
			$-$ & $-$ & $\Prob{\gamma = k}$ & $l \, \Prob{\gamma = l}/\Exp{\gamma}$.
		\end{tabular}}
		\caption{Distributions of $\rdegree^\alpha$ and $\rdegree^\beta$ for $\alpha,\beta\in \{+,-\}$.}
		\label{tab:limiting_degrees}
		\end{table}
\end{proof}

We end this section with a convergence result for first and second moment of $\CExp{I_{\uedge_n}^{
\alpha, \beta}(k, l)}{G_n}$.

\begin{prop}\label{prop:cm_conv_empirical_propabilities}
Let $\{G_n\}_{n \in \N}$ be a sequence of CM graphs with $|V_n| = n$ and $\alpha, \beta \in \{+, -\}$. 
Then, for each $k, l \in \N$,
\begin{enumerate}[\upshape i)]
	\item $\displaystyle \lim_{n \to \infty} \Exp{\CExp{I_{\uedge_n}^{\alpha, \beta}(k, l)}{G_n}} = 
		\Prob{\rdegree^\alpha = k}\Prob{\rdegree^\beta = l}$,
	\item $\displaystyle \lim_{n \to \infty} \Exp{\CExp{I_{\uedge_n}^{\alpha, \beta}(k, l)}{G_n}^2} = 
		\Prob{\rdegree^\alpha = k}^2\Prob{\rdegree^\beta = l}^2$,
\end{enumerate}	
and hence, as $n \to \infty$, \, $\displaystyle	\CExp{I_{\uedge_n}^{\alpha, \beta}(k, l)}{G_n} 
\plim \Prob{\rdegree^\alpha = k}\Prob{\rdegree^\beta = l}.$
\end{prop}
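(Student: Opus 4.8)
The plan is to derive all three assertions from Propositions~\ref{prop:cm_zerovariance} and~\ref{prop:cm_empirical_conv}, using nothing more than the tower property of conditional expectation and the fact that convergence in probability of a uniformly bounded sequence implies convergence of its expectations. Write $X_n = \CExp{I_{\uedge_n}^{\alpha, \beta}(k, l)}{G_n}$ and $c = \Prob{\rdegree^\alpha = k}\Prob{\rdegree^\beta = l}$; since $I_{\uedge_n}^{\alpha, \beta}(k, l)$ is an indicator we have $0 \le X_n \le 1$ and $0 \le \CExp{X_n^2}{\bisqn{G_n}} \le 1$ almost surely. For CM graphs the bi-degree sequence is a function of $G_n$ (indeed $\degreesqn{G_n} = \bisqn{G_n}$), so $\sigma(\bisqn{G_n}) \subseteq \sigma(G_n)$ and the tower property gives $\Exp{X_n} = \Exp{\CExp{X_n}{\bisqn{G_n}}}$ and $\Exp{X_n^2} = \Exp{\CExp{X_n^2}{\bisqn{G_n}}}$. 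This reduces both limits to statements about the $\bisqn{G_n}$-conditional quantities that the preceding propositions already control.

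For i) I would invoke Proposition~\ref{prop:cm_empirical_conv} to get $\CExp{X_n}{\bisqn{G_n}} \plim c$; uniform boundedness by $1$ then upgrades this to $L^1$-convergence, and taking expectations yields $\Exp{X_n} \to c$. For ii) I would first apply Proposition~\ref{prop:cm_zerovariance}, which states $\CExp{X_n^2}{\bisqn{G_n}} = \CExp{X_n}{\bisqn{G_n}}^2 + o_\pr(1)$, and then the continuous mapping theorem applied to Proposition~\ref{prop:cm_empirical_conv} to obtain $\CExp{X_n}{\bisqn{G_n}}^2 \plim c^2$; combining the two gives $\CExp{X_n^2}{\bisqn{G_n}} \plim c^2$, and once again boundedness by $1$ yields $\Exp{X_n^2} \to c^2$.

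For the final claim $X_n \plim c$ I would simply note that i) and ii) give $\Exp{X_n^2} - \Exp{X_n}^2 \to c^2 - c^2 = 0$, so the variance of $X_n$ vanishes while its mean converges to $c$, whence Chebyshev's inequality delivers $X_n \plim c$. There is no serious obstacle here: the substantive work was done in establishing Propositions~\ref{prop:cm_zerovariance} and~\ref{prop:cm_empirical_conv}, and the only point needing a word of care is the justification of each passage from convergence in probability to convergence of means, which is valid precisely because every quantity in sight is a conditional expectation of an indicator and hence lies in $[0,1]$.
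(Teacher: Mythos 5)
Your proposal is correct and follows essentially the same route as the paper: part i) via Proposition~\ref{prop:cm_empirical_conv} plus bounded/dominated convergence, part ii) via the vanishing conditional variance and the continuous mapping theorem, and the final claim by a second moment (Chebyshev) argument. The only cosmetic difference is that you invoke Proposition~\ref{prop:cm_zerovariance} directly for part ii), whereas the paper unpacks the same content from Lemma~\ref{lem:rcmedgecount}; your use of the uniform bound $0 \le \CExp{I_{\uedge_n}^{\alpha,\beta}(k,l)}{G_n}^2 \le 1$ also neatly sidesteps the paper's side remark about the boundedness of the $o_\pr(1)$ terms.
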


\begin{proof} \hfill \par
	\begin{enumerate}[\upshape i)]
		\item Let $k, l \in \N$, then, since
		\begin{align}
			\CExp{\CExp{I_\uedge^{\alpha, \beta}(k, l)}{G_n}}{\bisqn{G_n}} \le 1 \label{eq:cm_conv_emp_1},
		\end{align}
		it follows, using Proposition~\ref{prop:cm_empirical_conv} and dominated 
		convergence, that for each pair $\alpha, \beta \in \{+, -\}$, we have
		\begin{align*}
			\lim_{n\to\infty}\Exp{\CExp{I_{\uedge_n}^{\alpha, \beta}(k, l)}{G_n}}=\Prob{\rdegree^\alpha = k}
			\Prob{\rdegree^\beta = l},
		\end{align*}
		where $\rdegree^\alpha$, $\rdegree^\beta$ have distributions defined in Table
		\ref{tab:limiting_degrees}.
		\item For the second moment we get, using conditioning on $\bisqn{G_n}$,
		\begin{align}
			\lim_{n \to \infty} \Exp{\CExp{I_{\uedge_n}^{\alpha, \beta}(k, l)}{G_n}^2} &= \lim_{n \to \infty}
			\Exp{\CExp{\CExp{I_{\uedge_n}^{\alpha, \beta}(k, l)}{G_n}^2}{\bisqn{G_n}}}\nonumber\\		
			&= \lim_{n \to \infty} \Exp{\left(\sum_{e \in V_n^2} I_{e}^{\alpha, \beta}(k, l)\frac{\widehat{D}_n^+ \pi_\ast e 
				\widehat{D}_n^- \pi^\ast e}{|\widehat{E}_n|^2}\right)^2 + o_{\pr}(1)}  \label{eq:th5-3-1}\\
			&= \lim_{n \to \infty} \Exp{\CExp{\CExp{I_\uedge^{\alpha, \beta}(k, l)}{G_n}}{\bisqn{G_n}}^2+o_{\pr}(1)}
				\label{eq:th5-3-2}\\		
			&= \left(\Prob{\rdegree^\alpha = k}\Prob{\rdegree^\beta = l}\right)^2.\label{eq:th5-3-3}
		\end{align}
		Here \eqref{eq:th5-3-1} follows from Lemma~\ref{lem:rcmedgecount} ii), \eqref{eq:th5-3-2} is by 
		Lemma~\ref{lem:rcmedgecount} i), and~\eqref{eq:th5-3-3} is due to Proposition~\ref{prop:cm_empirical_conv}, 
		continuous mapping theorem,~\eqref{eq:cm_conv_emp_1} and the
		fact that the $o_\pr(1)$ terms are uniformly bounded, see proof Lemma~\ref{lem:rcmedgecount}. The 
		distributions of $\rdegree^\alpha$,	$\rdegree^\beta$ are again given in Table~\ref{tab:limiting_degrees}.
	\end{enumerate}
	The last result now follows by a second moment argument.
\end{proof}

\subsection{Repeated Configuration Model}
\label{ssec:repeated}

Described in Section 4.1 of~\cite{chen2013}, RCM connects
inbound and outbound stubs uniformly at random and then the resulting graph is checked to be simple. 
If not, one repeats the connection step until the resulting graph is simple. If the distributions $F_-$ and 
$F_+$ have finite variances, then the probability of the graph being simple converges to a non-zero 
number, see~\cite{chen2013}, Theorem 4.3. Therefore, throughout this section, we 
will assume that $\Exp{\gamma^2}$, $\Exp{\xi^2} < \infty$. 

Let $\{G_n\}_{n \in \N}$ be again a sequence of CM graphs, and let $S_n$ denote the event that $G_n$ is simple. 
We will prove, in Theorem~\ref{thm:rcmcorrconv} below, that for a sequence of RCM graphs 
of growing size, our three rank correlation measures converge to zero, by showing that for all $\alpha,
\beta \in \{+, -\}$ and $k, l \in \N$,
\[
	\CExp{I_{\uedge_n}^{\alpha, \beta}(k, l)}{G_n, S_n} \plim \Prob{\rdegree^\alpha = k}
	\Prob{\rdegree^\beta = l},
\]
as $n \to \infty$, where $\rdegree^\alpha$ and $\rdegree^\beta$ are random variables whose distributions are 
defined in Table~\ref{tab:limiting_degrees}.

First we show that, asymptotically, conditioning on the graph being simple does not effect the 
conditional expectation $\CExp{\CExp{I_{\uedge_n}^{\alpha, \beta}(k, l)}{G_n}}{\bisqn{G_n}}$.

\begin{lem}\label{lem:rcm_conv_cond_probability}
	Let $\{G_n\}_{n \in \N}$ be a sequence of CM graphs with $|V_n| = n$ and $\alpha, \beta \in \{+, -\}$ 
	and denote by $S_n$ the event that $G_n$ is simple. Then, for each $k, l \in \N$, as $n 
	\to \infty$,
	\[
			\left|\CExp{\CExp{I_{\uedge_n}^{\alpha, \beta}(k, l)}{G_n, S_n}}{\bisqn{G_n}} 
			- \CExp{\CExp{I_{\uedge_n}^{\alpha, \beta}(k, l)}{G_n}}{\bisqn{G_n}}\right| \plim 0.
	\]
\end{lem}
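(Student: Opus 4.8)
The plan is to separate the wiring randomness from the degree sequence and reduce the claim to an asymptotic conditional decorrelation statement that is already essentially supplied by Proposition~\ref{prop:cm_zerovariance}. Throughout, abbreviate $J_n := \CExp{I_{\uedge_n}^{\alpha,\beta}(k,l)}{G_n}$; this is a $\sigma(G_n)$-measurable random variable taking values in $[0,1]$, since conditioning on $G_n$ leaves only the uniform choice of $\uedge_n$. Note that $\sigma(\bisqn{G_n}) \subseteq \sigma(G_n)$ and that $S_n = \{G_n \text{ is simple}\}$ is $\sigma(G_n)$-measurable. Because RCM is CM conditioned on $S_n$, and because the sampling of $\uedge_n$ given $G_n$ does not involve $S_n$ (so that $\CExp{\cdot}{G_n,S_n} = \CExp{\cdot}{G_n}$ on $S_n$ for the inner expectation), the first term on the left of the lemma equals $\CExp{J_n}{\bisqn{G_n}, S_n} = \CExp{J_n \Ind{S_n}}{\bisqn{G_n}}/\Prob{S_n | \bisqn{G_n}}$, while the second term is simply $\CExp{J_n}{\bisqn{G_n}}$. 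Subtracting and putting over a common denominator, the difference becomes
\[
	\frac{\CExp{J_n \Ind{S_n}}{\bisqn{G_n}} - \CExp{J_n}{\bisqn{G_n}}\,\CExp{\Ind{S_n}}{\bisqn{G_n}}}{\Prob{S_n | \bisqn{G_n}}},
\]
that is, a conditional covariance of $J_n$ and $\Ind{S_n}$ given $\bisqn{G_n}$, divided by the conditional simplicity probability.

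For the numerator I would apply the conditional Cauchy--Schwarz inequality and then bound the conditional variance of the Bernoulli variable $\Ind{S_n}$ by $1/4$, obtaining
\[
	\left|\CExp{J_n \Ind{S_n}}{\bisqn{G_n}} - \CExp{J_n}{\bisqn{G_n}}\,\CExp{\Ind{S_n}}{\bisqn{G_n}}\right| \le \frac{1}{2}\sqrt{\CExp{J_n^2}{\bisqn{G_n}} - \left(\CExp{J_n}{\bisqn{G_n}}\right)^2}.
\]
By Proposition~\ref{prop:cm_zerovariance} the expression under the square root converges to $0$ in probability, so the numerator is $o_\pr(1)$. For the denominator I would invoke the simplicity estimate of~\cite{chen2013} (Theorem 4.3): under the standing assumption $\Exp{\gamma^2}, \Exp{\xi^2} < \infty$, the conditional probability $\Prob{S_n | \bisqn{G_n}}$ stays bounded away from $0$ with probability tending to $1$ (in fact it converges in probability to a strictly positive constant).

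A routine Slutsky-type argument — the numerator tends to $0$ in probability while the denominator is bounded below by a positive constant with probability tending to one, so the map $(x,y)\mapsto x/y$ may be applied — then shows that the ratio above converges to $0$ in probability, which is precisely the assertion of the lemma.

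The only non-routine inputs are Proposition~\ref{prop:cm_zerovariance} and the lower bound on the conditional simplicity probability; modulo these, the argument is a one-line Cauchy--Schwarz estimate. Accordingly, I expect the \textbf{main obstacle} to be making sure that the simplicity result of~\cite{chen2013} is available in the \emph{conditional} form $\Prob{S_n | \bisqn{G_n}} \plim c > 0$ rather than merely the unconditional $\Prob{S_n} \to c$ — the crude bound one gets from decomposing over $S_n$ and $S_n^{c}$ does \emph{not} work, since $\Prob{S_n^{c}|\bisqn{G_n}}$ need not be small — and, relatedly, pinning down rigorously the meaning of the nested conditioning $\CExp{\cdot}{G_n,S_n}$ and of conditioning on $\bisqn{G_n}$ under the RCM law, so that the identification of the two terms on the left-hand side is justified.
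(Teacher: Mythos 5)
Your proposal is correct and follows essentially the same route as the paper: both reduce the difference to a conditional covariance of $\CExp{I_{\uedge_n}^{\alpha,\beta}(k,l)}{G_n}$ and $\Ind{S_n}$ given $\bisqn{G_n}$, kill it via conditional Cauchy--Schwarz together with Proposition~\ref{prop:cm_zerovariance}, and control the denominator using the simplicity estimates of~\cite{chen2013}. The only cosmetic difference is that the paper normalizes by the unconditional $\Prob{S_n}$ and therefore carries the extra error term $\left|\Prob{S_n|\bisqn{G_n}}/\Prob{S_n}-1\right|=o_\pr(1)$, whereas you normalize by $\Prob{S_n|\bisqn{G_n}}$ directly; the conditional convergence $\Prob{S_n|\bisqn{G_n}}\plim c>0$ that you correctly flag as the key external input is exactly what the paper extracts from the proof of Proposition 4.4 in~\cite{chen2013}.
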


\begin{proof} First, we write
	\begin{align}
		&\left|\CExp{\CExp{I_{\uedge_n}^{\alpha, \beta}(k, l)}{G_n, S_n}}{\bisqn{G_n}} 
			- \CExp{\CExp{I_{\uedge_n}^{\alpha, \beta}(k, l)}{G_n}}{\bisqn{G_n}}\right| \notag \\
		&= \left|\CExp{\CExp{I_{\uedge_n}^{\alpha, \beta}(k, l)}{G_n}\left(\frac{\Ind{S_n}}
			{\Prob{S_n}} - 1\right)}{\bisqn{G_n}}\right|. \label{eq:lem:rcm_conv_cond_probability}
	\end{align}
			
	Next, denote by 
	\[
		\text{Var}\left(\left.\CExp{I_{\uedge_n}^{\alpha, \beta}(k, l)}{G_n}\right| \bisqn{G_n}
	\right) \quad \text{and} \quad \text{Var}\left(\left.\Ind{S_n}\right| \bisqn{G_n}\right)
	\]
	the variance of, respectively	$\CExp{I_{\uedge_n}^{\alpha, \beta}(k, l)}{G_n}$ and $\Ind{S_n}$, 
	conditioned on $\bisqn{G_n}$. Then, by adding and subtracting in~\eqref{eq:lem:rcm_conv_cond_probability} 
	the product of the conditional expectations 
	\[
		\CExp{\CExp{I_{\uedge_n}^{\alpha, \beta}(k, l)}{G_n}}{\bisqn{G_n}}\left(\frac{
			\Prob{S_n|\bisqn{G_n}}}{\Prob{S_n}} - 1\right),
	\]
	we get
	\begin{align*}
		\eqref{eq:lem:rcm_conv_cond_probability}&\le \frac{1}{\Prob{S_n}}\sqrt{\text{Var}\left(\left.\Ind{S_n}\right| 
		\bisqn{G_n}\right)} \sqrt{
			\text{Var}\left(\left.\CExp{I_{\uedge_n}^{\alpha, \beta}(k, l)}{G_n}\right| \bisqn{G_n}\right)} \\
		&\hspace{10pt}+ \left|\CExp{\CExp{I_{\uedge_n}^{\alpha, \beta}(k, l)}{G_n}}{\bisqn{G_n}}\left(\frac{
			\Prob{S_n|\bisqn{G_n}}}{\Prob{S_n}} - 1\right)\right| \\
		&\le \frac{1}{\Prob{S_n}} \sqrt{\text{Var}\left(\left.\CExp{I_{\uedge_n}^{\alpha, \beta}(k, l)}{G_n}\right| 
			\bisqn{G_n}\right)} + \left|\frac{\Prob{S_n|\bisqn{G_n}}}{\Prob{S_n}} - 1\right|.
			\numberthis \label{eq:rcm_edgecount_1}
	\end{align*}
	Following the argument in the first part of the proof of Proposition 4.4 from~\cite{chen2013} we 
	conclude that, $\Prob{\mathcal{S}_n|\bisqn{G_n}}$ and $\Prob{\mathcal{S}_n}$ 
	converge to the same positive limit, hence the latter expression in~\eqref{eq:rcm_edgecount_1}
	is $o_\pr(1)$. The result now follows, since by Proposition~\ref{prop:cm_zerovariance}
	\[
		\text{Var}\left(\left.\CExp{I_{\uedge_n}^{\alpha, \beta}(k, l)}{G_n}\right| \bisqn{G_n}\right) 
		= o_\pr(1).
	\]
\end{proof}

In the next 
theorem we show that the conditions of Theorem~\ref{thm:convdegdegcorr} hold for a sequence of 
RCM graphs, and thus obtain the desired convergence of the three rank correlations, using a second moment argument.

\begin{thm}\label{thm:rcmcorrconv}
	Let $\{G_n\}_{n \in \N}$ be a sequence of RCM graphs with $|V_n| = n$ and $\alpha, \beta \in \{+, -\}$. 
	Then, as $n \to \infty$,
	\[
		\spearman_\alpha^\beta(G_n) \plim 0, \quad \spearmanaverage_\alpha^\beta(G_n) \plim 0 \quad 
		\text{and} \quad	\kendall_\alpha^\beta(G_n) \plim 0.
	\]
\end{thm}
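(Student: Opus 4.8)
The plan is to verify Condition~\ref{cond:degreedegree} for the sequence of RCM graphs and then invoke Theorem~\ref{thm:convdegdegcorr}. Concretely, an RCM graph on $n$ vertices has the law of a CM graph $G_n$ conditioned on the event $S_n$ that it is simple, so I must show that, for all $\alpha,\beta\in\{+,-\}$ and all $k,l\in\N$,
\[
	H^{\alpha,\beta}_{G_n}(k,l)\plim H_{\rdegree^\alpha,\rdegree^\beta}(k,l)\quad\text{as }n\to\infty,
\]
conditioned on the simple graph, where $\rdegree^\alpha$, $\rdegree^\beta$ have the distributions of Table~\ref{tab:limiting_degrees} (which are product distributions, so $H_{\rdegree^\alpha,\rdegree^\beta}(k,l)=F_{\rdegree^\alpha}(k)F_{\rdegree^\beta}(l)$). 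Since $H^{\alpha,\beta}_{G_n}(k,l)$ is a finite sum of the empirical probabilities $\CExp{I^{\alpha,\beta}_{\uedge_n}(k',l')}{G_n,S_n}$ over $k'\le k$, $l'\le l$, it suffices to prove
\[
	\CExp{I^{\alpha,\beta}_{\uedge_n}(k,l)}{G_n,S_n}\plim \Prob{\rdegree^\alpha=k}\Prob{\rdegree^\beta=l}
\]
for every fixed $k,l$. Once this conditional-convergence statement holds, the hypothesis $|E_n|\plim\infty$ is immediate (the bi-degree sequence has $|\widehat E_n|\sim n\Exp{\xi}$ by Proposition~\ref{prop:bisqn_chen}, and RCM does not change the number of edges), so Theorem~\ref{thm:convdegdegcorr} applies and yields $\spearman_\alpha^\beta(G_n)\plim\spearman(\rdegree^\alpha,\rdegree^\beta)$, and similarly for $\spearmanaverage$ and $\kendall$. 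It then remains only to observe that for the product-form limiting pair $(\rdegree^\alpha,\rdegree^\beta)$ all three population correlations vanish: independence gives $\Exp{\mathcal F_{\rdegree^\alpha}(\rdegree^\alpha)\mathcal F_{\rdegree^\beta}(\rdegree^\beta)}=\Exp{\mathcal F_{\rdegree^\alpha}(\rdegree^\alpha)}\Exp{\mathcal F_{\rdegree^\beta}(\rdegree^\beta)}=1$ by~\eqref{eq:condexprank}-type identities, whence $\spearman(\rdegree^\alpha,\rdegree^\beta)=0$ by~\eqref{eq:spearmanxy}, and analogously $\kendall(\rdegree^\alpha,\rdegree^\beta)=\Exp{\mathcal H_{\rdegree^\alpha,\rdegree^\beta}(\rdegree^\alpha,\rdegree^\beta)}-1=0$ by~\eqref{eq:kendallxy}; the normalization in $\spearmanaverage$ is nonzero since $\rdegree^\alpha,\rdegree^\beta$ are not concentrated in a point, so its limit is $0$ as well.

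The core of the argument is the conditional-convergence display, and this is where I would use a second moment argument over the randomness of the pairing, run conditionally on $\bisqn{G_n}$. The decomposition is: first, by Lemma~\ref{lem:rcm_conv_cond_probability}, the conditional-on-simple expectation $\CExp{\CExp{I^{\alpha,\beta}_{\uedge_n}(k,l)}{G_n,S_n}}{\bisqn{G_n}}$ differs from the unconditioned $\CExp{\CExp{I^{\alpha,\beta}_{\uedge_n}(k,l)}{G_n}}{\bisqn{G_n}}$ by $o_\pr(1)$, and the latter converges in probability to $\Prob{\rdegree^\alpha=k}\Prob{\rdegree^\beta=l}$ by Proposition~\ref{prop:cm_empirical_conv}. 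So the conditional-on-simple empirical probability has the right mean (in probability). Second, I need its conditional variance to vanish. Here I would bound
\[
	\mathrm{Var}\!\left(\left.\CExp{I^{\alpha,\beta}_{\uedge_n}(k,l)}{G_n,S_n}\right|\bisqn{G_n}\right)
	\le \frac{1}{\Prob{S_n\mid\bisqn{G_n}}}\,\mathrm{Var}\!\left(\left.\CExp{I^{\alpha,\beta}_{\uedge_n}(k,l)}{G_n}\right|\bisqn{G_n}\right),
\]
because conditioning a nonnegative bounded random variable on an event of probability $p$ inflates its second moment by at most $1/p$; the right-hand side is $o_\pr(1)$ by Proposition~\ref{prop:cm_zerovariance} together with the fact, borrowed from the proof of Proposition~4.4 in~\cite{chen2013}, that $\Prob{S_n\mid\bisqn{G_n}}$ stays bounded away from $0$. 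Combining the vanishing conditional variance with the convergent conditional mean, Chebyshev's inequality (conditionally on $\bisqn{G_n}$) and then taking out the outer randomness gives the unconditional convergence in probability.

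The main obstacle I anticipate is making the conditioning-on-$S_n$ step fully rigorous: $S_n$ is an event of the pairing, not of $\bisqn{G_n}$, so all the ``variance'' manipulations have to be carried out in the correct conditional probability space, and one has to be careful that $\Prob{S_n\mid\bisqn{G_n}}$ is itself random but bounded below with high probability, not deterministically. This is precisely the content already isolated in Lemma~\ref{lem:rcm_conv_cond_probability} and in the cited argument from~\cite{chen2013}, so in practice the proof is short: cite Lemma~\ref{lem:rcm_conv_cond_probability} and Proposition~\ref{prop:cm_empirical_conv} for the mean, cite Proposition~\ref{prop:cm_zerovariance} (plus the $1/\Prob{S_n\mid\bisqn{G_n}}$ inflation bound and the lower bound on this probability) for the variance, conclude $\CExp{I^{\alpha,\beta}_{\uedge_n}(k,l)}{G_n,S_n}\plim\Prob{\rdegree^\alpha=k}\Prob{\rdegree^\beta=l}$, deduce Condition~\ref{cond:degreedegree} holds with the product law from Table~\ref{tab:limiting_degrees}, apply Theorem~\ref{thm:convdegdegcorr}, and finish by the elementary computation showing $\spearman$, $\spearmanaverage$ and $\kendall$ all vanish for an independent pair of non-degenerate integer random variables.
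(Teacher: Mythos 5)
Your proposal is correct and follows essentially the same route as the paper: establish $\CExp{I^{\alpha,\beta}_{\uedge_n}(k,l)}{G_n,S_n}\plim\Prob{\rdegree^\alpha=k}\Prob{\rdegree^\beta=l}$ by a second moment argument conditional on $\bisqn{G_n}$, using Lemma~\ref{lem:rcm_conv_cond_probability} and Proposition~\ref{prop:cm_empirical_conv} for the mean and Proposition~\ref{prop:cm_zerovariance} plus the positivity of $\Prob{S_n\mid\bisqn{G_n}}$ for the fluctuations, then feed the independent, non-degenerate limit pair from Table~\ref{tab:limiting_degrees} into Theorem~\ref{thm:convdegdegcorr}. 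The one place you deviate is the variance step, where instead of the paper's three-term decomposition of the conditional second moment you invoke the inflation bound $\mathrm{Var}(Y\mid S)\le\mathrm{Var}(Y)/\Prob{S}$; this is valid (apply your second-moment inflation to the centered variable $Y-\Exp{Y}$ and use that the conditional variance minimizes the conditional mean squared deviation over constants), and gives a slightly shorter argument than the one in the text.
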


\begin{proof}
	Instead of conditioning on RCM graphs we condition on CM graphs $G_n$ and the event that it is simple, $S_n$.
	Let $k, l \in \N$ and let $\rdegree^\alpha$, $\rdegree^\beta$ have distributions defined in Table
	\ref{tab:limiting_degrees}. Then, for each pair $\alpha, \beta \in \{+, -\}$, we have
	\begin{align*}
		&\left|\CExp{\CExp{I_{\uedge_n}^{\alpha, \beta}(k, l)}{G_n, S_n}}{\bisqn{G_n}} - \Prob{
			\rdegree^\alpha = k}\Prob{\rdegree^\beta = l}\right| \\
		&\le \left|\CExp{\CExp{I_{\uedge_n}^{\alpha, \beta}(k, l)}{G_n, S_n}}{\bisqn{G_n}} - \CExp{\CExp{
			I_{\uedge_n}^{\alpha, \beta}(k, l)}{G_n}}{\bisqn{G_n}}\right| \\
		&\hspace{10pt}+ \left|\CExp{\CExp{I_{\uedge_n}^{
			\alpha, \beta}(k, l)}{G_n}}{\bisqn{G_n}} - \Prob{\rdegree^\alpha = k}
			\Prob{\rdegree^\beta = l}\right|.
	\end{align*}
	Hence by Lemma~\ref{lem:rcm_conv_cond_probability} and Proposition~\ref{prop:cm_empirical_conv} it 
	follows that, as $n \to \infty$,
	\begin{align*}
		\CExp{\CExp{I_{\uedge_n}^{\alpha, \beta}(k, l)}{G_n, S_n}}{\bisqn{G_n}} \plim \Prob{\rdegree^\alpha = k}
			\Prob{\rdegree^\beta = l}.
	\end{align*}
	Since $\CExp{\CExp{I_{\uedge_n}^{\alpha, \beta}(k, l)}{G_n, S_n}}{\bisqn{G_n}} \le 1$,
	dominated convergence and the above imply that
	\begin{align}
		\lim_{n \to \infty} \Exp{\CExp{I_{\uedge_n}^{\alpha, \beta}(k, l)}{G_n, S_n}} 
			= \Prob{\rdegree^\alpha = k}\Prob{\rdegree^\beta = l}. \label{eq:rcm_conv_correlations_1} 
	\end{align}
	
	For the second moment we have
	\begin{align}
		&\left|\CExp{\CExp{I_{\uedge_n}^{\alpha, \beta}(k,l)}{G_n, S_n}^2}{\bisqn{G_n}} - 
			\Prob{\rdegree^\alpha = k}^2\Prob{\rdegree^\beta = l}^2\right| \notag \\
		&\le \left|\CExp{\left(\left(\frac{\Ind{S_n}}{\Prob{S_n}}\right)^2 - 1\right)\CExp{I_{\uedge_n}^{\alpha, 
			\beta}(k,l)}{G_n}^2}{\bisqn{G_n}}\right| \label{eq:rcm_conv_correlations_2}\\
		&\hspace{10pt}+ \left|\CExp{\CExp{I_{\uedge_n}^{\alpha, \beta}(k,l)}{G_n}^2}{\bisqn{G_n}} - 
			\CExp{\CExp{I_{\uedge_n}^{\alpha, \beta}(k,l)}{G_n}}{\bisqn{G_n}}^2\right| 
			\label{eq:rcm_conv_correlations_3}\\
		&\hspace{10pt}+ \left|\CExp{\CExp{I_{\uedge_n}^{\alpha, \beta}(k,l)}{G_n}}{\bisqn{G_n}}^2 - 
			\Prob{\rdegree^\alpha = k}^2\Prob{\rdegree^\beta = l}^2\right| \label{eq:rcm_conv_correlations_4}
	\end{align}

	From Proposition~\ref{prop:cm_zerovariance} it follows that
	\eqref{eq:rcm_conv_correlations_3} converges to zero, while this holds for
	\eqref{eq:rcm_conv_correlations_4} because of Proposition~\ref{prop:cm_empirical_conv}
	and the continuous mapping theorem. Finally, since
	\[
		\left(\left(\frac{\Ind{S_n}}{\Prob{S_n}}\right)^2 - 1\right) \le \left(\frac{\Ind{S_n}}{\Prob{S_n}} - 
		1\right)\left(1 + \Prob{S_n}^{-1}\right) \quad \text{and} \quad \CExp{I_{\uedge_n}^{\alpha, \beta}(k, l)}
		{G_n} \le 1,
	\]
	it follows that
	\[
		\eqref{eq:rcm_conv_correlations_2} \le \CExp{\CExp{I_{\uedge_n}^{\alpha, \beta}(k, l)}{G_n}
		\left(\frac{\Ind{S_n}}{\Prob{S_n}} - 1\right)}
		{\bisqn{G_n}}\left(1 + \Prob{S_n}^{-1}\right) \plim 0 \quad \text{as } n \to \infty,
	\]
	by~\eqref{eq:lem:rcm_conv_cond_probability}, Lemma~\ref{lem:rcm_conv_cond_probability} and Proposition 4.4 
	from~\cite{chen2013}. Therefore, using~\eqref{eq:cm_conv_emp_1} and dominated convergence, we get
	\begin{align}
		\lim_{n \to \infty} \Exp{\CExp{I_{\uedge_n}^{\alpha, \beta}(k, l)}{G_n, S_n}^2} 
			= \Prob{\rdegree^\alpha = k}^2\Prob{\rdegree^\beta = l}^2. \label{eq:rcm_conv_correlations_5}
	\end{align}
	Combining~\eqref{eq:rcm_conv_correlations_1} and~\eqref{eq:rcm_conv_correlations_5}, a second moment 
	argument now yields that,
	\[
		\CExp{I_{\uedge_n}^{\alpha, \beta}(k, l)}{G_n, S_n} \plim \Prob{\rdegree^\alpha = k}
		\Prob{\rdegree^\beta = l} \quad \text{as } n \to \infty.
	\]
	The result now follows from Theorem~\ref{thm:convdegdegcorr} by observing that the random 
	variables	$\rdegree^\alpha$ and $\rdegree^\beta$ are independent and not concentrated in a single 
	point. The latter is needed so that in case of average ranking we have $S_{\rdegree^\alpha}\left(
	\rdegree^\alpha\right) \ne 0$, see Theorem~\ref{thm:convdegdegcorr}.
\end{proof}

\subsection{Erased Configuration Model}\label{ssec:erased}

When the variances of the degree distributions are infinite, the probability of getting a simple graph 
using RCM converges to zero as the graph size increases. 
To remedy this we use ECM, described in Section 4.2 of~\cite{chen2013}. In ECM stubs are connected 
at random, and then self-loops are removed and multiple edges are merged. We emphasize that for this model 
the actual degree sequence $\degreesqn{G}$ may differ from the 
bi-degree sequence, $\bisqn{G}$, used to do the pairing. 

We will often use results from Proposition 4.5 of~\cite{chen2013}, which we state below for reference. 
\begin{prop}[\cite{chen2013}, Proposition 4.5] \label{prop:repeatedchen}
	Let $G_n = (V_n, E_n)$ be a sequence of ECM graphs with $|V_n| = n$ and $k, l \in \N$. Then, as $n 
	\to \infty$,
	\[ 
		\frac{1}{n} \sum_{v \in V_n} \Ind{D^+ v = k} \plim \Prob{\xi = k} \quad \text{and}
		\quad \frac{1}{n} \sum_{v \in V_n} \Ind{D^- v = l} \plim \Prob{\gamma = l}.
	\]
\end{prop}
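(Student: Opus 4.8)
The plan is to deduce the statement from Proposition~\ref{prop:bisqn_chen} by showing that the erasure step alters only a vanishing fraction of the stubs. Let $G_n'$ be the CM multi-graph from which the ECM graph $G_n$ is obtained by deleting self-loops and merging parallel edges, so that $\degreesqn{G_n'}=\bisqn{G_n}$. By Proposition~\ref{prop:bisqn_chen} --- or directly from the construction of $\bisqn{G_n}$ in~\cite{chen2013}, Section~2.1, where the stub counts are built from independent copies of $\xi$ and $\gamma$ --- we have, for each $k,l\in\N$,
\[
	\frac{1}{n}\sum_{v\in V_n}\Ind{\widehat{D}_n^+ v=k}\plim\Prob{\xi=k},\qquad\frac{1}{n}\sum_{v\in V_n}\Ind{\widehat{D}_n^- v=l}\plim\Prob{\gamma=l}.
\]
Since erasure only removes edge-copies, $0\le\widehat{D}_n^\alpha v-D_n^\alpha v$ for every $v$ and every $\alpha\in\{+,-\}$, and a vertex can contribute to $\bigl|\tfrac1n\sum_{v}\Ind{D_n^\alpha v=k}-\tfrac1n\sum_{v}\Ind{\widehat{D}_n^\alpha v=k}\bigr|$ only if it lost at least one stub of type $\alpha$. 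Hence this quantity is at most $\tfrac{2}{n}M_n^\alpha$, where $M_n^\alpha:=\sum_{v\in V_n}\bigl(\widehat{D}_n^\alpha v-D_n^\alpha v\bigr)$ is the total number of erased stubs of type $\alpha$, and it suffices to prove $M_n^\alpha=o_\pr(n)$ for $\alpha\in\{+,-\}$.

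To estimate $M_n^+$ (the argument for $M_n^-$ is symmetric) I would write $M_n^+\le 2L_n+P_n$, where $L_n$ is the number of self-loops and $P_n$ the number of redundant parallel out-edge copies that get merged. Conditioning on $\bisqn{G_n}$ and using $\Prob{v_i^+\to w_j^-\mid\bisqn{G_n}}=1/|\widehat{E}_n|$, a routine pair-counting computation gives $\CExp{L_n}{\bisqn{G_n}}\le\sum_{v}\widehat{D}_n^+ v\,\widehat{D}_n^- v/|\widehat{E}_n|$ and, analogously, $\CExp{P_n}{\bisqn{G_n}}$ is bounded, up to constants, by $\bigl(\sum_v(\widehat{D}_n^+ v)^2\bigr)\bigl(\sum_w(\widehat{D}_n^- w)^2\bigr)/|\widehat{E}_n|^2$. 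Since $|\widehat{E}_n|\sim\Exp{\xi}n$ and $\sum_v\widehat{D}_n^+ v\,\widehat{D}_n^- v$ is of order $n$ (the stub counts being independent across vertices with $\Exp{\xi}=\Exp{\gamma}<\infty$), the quantity $\CExp{L_n}{\bisqn{G_n}}$ stays bounded and hence $L_n=o_\pr(n)$ by Markov's inequality.

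The main obstacle is the term $P_n$. Its first-moment bound involves $\sum_v(\widehat{D}_n^\alpha v)^2$, which is of order $n$ only when $\xi$ and $\gamma$ have finite variance; in the genuinely heavy-tailed regime --- regularly varying tails with infinite variance, which is the case of interest --- these sums grow strictly faster than $n$, and the crude bound no longer gives $o_\pr(n)$. To get around this one truncates the degree sequence at a slowly growing level $b_n\to\infty$ with $b_n=o(\sqrt n)$: the vertices of degree exceeding $b_n$ are $o(n)$ in number and, because $\Exp{\xi\Ind{\xi>b_n}}\to 0$ and $\Exp{\gamma\Ind{\gamma>b_n}}\to 0$, carry only $o_\pr(n)$ stubs in total, so they affect the degrees of at most $o_\pr(n)$ other vertices; on the truncated sequence one has $\sum_v(\widehat{D}_n^\alpha v\wedge b_n)^2\le b_n|\widehat{E}_n|$, and the pair-counting bound then yields $\CExp{P_n^{\mathrm{light}}}{\bisqn{G_n}}=O(b_n^2)=o(n)$. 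Adding the two contributions gives $M_n^\alpha=o_\pr(n)$, which combined with the first display proves the claim; the detailed heavy-tailed bookkeeping is the computation carried out in~\cite{chen2013}.
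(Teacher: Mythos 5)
Your proposal is correct in substance, but it is worth noting first that the paper does not prove this statement at all: it is imported verbatim as Proposition 4.5 of~\cite{chen2013}, so there is no in-paper proof to match. That said, your reduction --- empirical convergence of the bi-degree sequence $\bisqn{G_n}$ via Proposition~\ref{prop:bisqn_chen}, plus the bound $\bigl|\tfrac1n\sum_v\Ind{D_n^\alpha v=k}-\tfrac1n\sum_v\Ind{\widehat D_n^\alpha v=k}\bigr|\le\tfrac2n\sum_v|E_n^{c,\alpha}(v)|$ --- is exactly the mechanism the paper itself sets up for the ECM, and the fact you need, $\sum_v|E_n^{c,\alpha}(v)|=o_\pr(n)$, is precisely the paper's Lemma~\ref{lem:erasedstubszero}. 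Where you diverge is in how you get that fact: the paper obtains it almost for free from the pointwise a.s.\ convergence $|E_n^{c,\alpha}(v)|\to0$ (Lemma 5.2 of~\cite{chen2013}) together with the domination $|E_n^{c,\alpha}(v)|\le\gamma_n+1$, dominated convergence, and Markov's inequality; you instead estimate the erased stubs directly by counting self-loops and coincident stub-pairings. Your self-loop bound is fine ($\Exp{\xi\gamma}=\Exp{\xi}\Exp{\gamma}<\infty$ by independence, so the conditional expectation is $O_\pr(1)$), and you correctly identify that the naive second-moment bound on parallel edges fails in the infinite-variance regime and that a truncation at $b_n=o(\sqrt n)$ with the heavy vertices handled via $\Exp{\xi\Ind{\xi>b_n}}\to0$ repairs it. That truncation argument is sound in outline, but its most delicate part (the bookkeeping for pairs with a heavy endpoint, and the fact that the degree-balancing adjustment in the construction of $\bisqn{G_n}$ perturbs only $o_\pr(n)$ stubs) is left at sketch level and deferred to~\cite{chen2013}. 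In short: what your route buys is a self-contained, quantitative estimate of the erasure; what the paper's route buys is brevity, by outsourcing the hard per-vertex statement to the cited work. If you were writing this into the present paper, invoking Lemma~\ref{lem:erasedstubszero} would collapse your second and third paragraphs to two lines.
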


We will follow the same second moment argument approach as in the previous section to prove that all 
three rank correlations, $\spearman$, $\spearmanaverage$ and $\kendall$ converge to zero in ECM. 
First we will establish a convergence result for the total number of erased in- and 
outbound stubs.

For $v, w \in V$ and $\alpha \in \{+, -\}$, we denote by $E^{c, \, \alpha}(v)$ and $E^c(v, w)$, 
respectively, the set of erased $\alpha$-stubs from $v$ and erased edges between $v$ and $w$. For 
$e \in V^2$, we write $E^c(e) = E^c(\pi_\ast e, \pi^\ast e)$. 

\begin{lem}\label{lem:erasedstubszero}
	Let $\{G_n\}_{n \in \N}$ be a sequence of ECM graphs with $|V_n| = n$ and $\alpha \in \{+, -\}$. 
	Then
	\[  
		\frac{1}{n} \sum_{v \in V_n} |E_n^{c, \, \alpha}(v)| \plim 0 \quad \text{as } n \to \infty.
	\]
\end{lem}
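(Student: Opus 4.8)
The plan is to bound the total number of erased $\alpha$-stubs by the number of stubs that are involved in self-loops or in multiple edges, and then show that the expectation of this count, divided by $n$, tends to $0$. Concretely, for each vertex $v$ the erased $\alpha$-stubs are those outbound (or inbound) stubs of $v$ that either get paired into a self-loop at $v$, or are the ``redundant'' copies when several stubs of $v$ connect to the same neighbour. Hence
\[
	|E_n^{c,+}(v)| \le \sum_{i=1}^{\widehat D_n^+ v} \Ind{v_i^+ \to v}
	+ \sum_{w \in V_n} \left( |\widehat E_n(v,w)| - 1 \right)^+ ,
\]
and symmetrically for $\alpha = -$. Summing over $v$, the first sum counts self-loop stubs and the second counts multiple-edge stubs; it therefore suffices to prove that $\tfrac1n \Exp{\#\text{self-loop stubs}} \to 0$ and $\tfrac1n\Exp{\#\text{multiple-edge stubs}} \to 0$, since nonnegative random variables with vanishing expectation converge to $0$ in probability.

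For the expectations I would condition on the bi-degree sequence $\bisqn{G_n}$ and use $\Prob{v_i^+ \to w_j^- \mid \bisqn{G_n}} = 1/|\widehat E_n|$, together with $\Prob{\,\text{two specified distinct pairings occur}\mid \bisqn{G_n}} = 1/(|\widehat E_n|(|\widehat E_n|-1))$. The expected number of self-loop stubs at $v$ is then $\widehat D_n^+ v\, \widehat D_n^- v / |\widehat E_n|$, so the expected total number of self-loop stubs is $\sum_{v} \widehat D_n^+ v\, \widehat D_n^- v / |\widehat E_n|$. Dividing by $n$: using $\widehat D_n^+ v\, \widehat D_n^- v \le (\widehat D_n^+ v)^2 + (\widehat D_n^- v)^2$ we are left with terms of the form $\tfrac1n \sum_v (\widehat D_n^\kappa v)^2 / |\widehat E_n|$. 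The analogous computation for multiple edges between $v$ and $w$ contributes, for each unordered pair of distinct stub-pairings landing in $(v,w)$, a factor $1/(|\widehat E_n|(|\widehat E_n|-1))$, giving an expected count bounded by $\sum_{v,w} \binom{\widehat D_n^+ v}{?}\cdots$, which after the same kind of majorization reduces to expressions like $\tfrac1n \sum_v (\widehat D_n^+ v)^2 (\widehat D_n^- v)^2 / |\widehat E_n|^2$ and $\tfrac1n\bigl(\sum_v (\widehat D_n^+ v)^2\bigr)\bigl(\sum_v(\widehat D_n^- v)^2\bigr)/|\widehat E_n|^2$.

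So the whole thing reduces to controlling moments of the bi-degree sequence. Here I would invoke the construction of $\bisqn{G_n}$ in Section 2.1 of~\cite{chen2013}: the stubs are $n$ (lightly modified) i.i.d.\ copies of $\xi$ and $\gamma$, which have finite mean (indeed they are stochastically dominated by regularly varying distributions with the appropriate tail index). By Proposition~\ref{prop:bisqn_chen}, $|\widehat E_n|/n \plim \Exp{\xi}=\Exp{\gamma} \in (0,\infty)$. For the second-moment-type sums, I would truncate at a level $M$: split $(\widehat D_n^\kappa v)^2 = (\widehat D_n^\kappa v)^2\Ind{\widehat D_n^\kappa v \le M} + (\widehat D_n^\kappa v)^2 \Ind{\widehat D_n^\kappa v > M}$. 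On the truncated part, $\tfrac1n\sum_v (\widehat D_n^\kappa v)^2 \Ind{\widehat D_n^\kappa v\le M} \le \tfrac{M}{n}\sum_v \widehat D_n^\kappa v = O(M)$, so dividing by $|\widehat E_n| \asymp n$ gives $O(M/n)\to 0$; on the tail part, $\tfrac1n \sum_v (\widehat D_n^\kappa v)^2 \Ind{\widehat D_n^\kappa v > M} \plim \Exp{(\xi^2 \vee \gamma^2)\Ind{\cdot > M}}$ by the law of large numbers / Proposition~\ref{prop:bisqn_chen}-type arguments, but this still need not vanish when the second moment is infinite. The resolution is that in every term the tail sum is divided by an extra power of $|\widehat E_n| \asymp n$: e.g.\ $\tfrac1n \sum_v (\widehat D_n^\kappa v)^2/|\widehat E_n| = \tfrac{1}{n|\widehat E_n|}\sum_v (\widehat D_n^\kappa v)^2$, and since $\sum_v (\widehat D_n^\kappa v)^2 \le \bigl(\sum_v \widehat D_n^\kappa v\bigr)^2 = |\widehat E_n|^2$, this whole quantity is $\le |\widehat E_n|/n = O(1)$ — not yet $o(1)$.

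\textbf{The main obstacle}, therefore, is getting the extra decay: a crude bound gives $O(1)$, not $o(1)$, so the argument must use that the contribution of the \emph{largest} degrees, while possibly carrying non-negligible second-moment mass, is spread over $\Theta(n)$ many pairings. The clean way is to write $\tfrac{1}{n|\widehat E_n|}\sum_v (\widehat D_n^\kappa v)^2 = \tfrac{1}{n} \sum_v \widehat D_n^\kappa v \cdot \tfrac{\widehat D_n^\kappa v}{|\widehat E_n|}$ and note $\widehat D_n^\kappa v/|\widehat E_n| \le \max_v \widehat D_n^\kappa v / |\widehat E_n|$, which tends to $0$ in probability because a single sample from a distribution with finite mean satisfies $\max_{v\le n}\widehat D_n^\kappa v = o_\pr(n)$. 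Combined with $\tfrac1n\sum_v \widehat D_n^\kappa v \plim \Exp{\xi}$, this yields the desired $o_\pr(1)$. The same device handles the products $\bigl(\sum_v (\widehat D_n^+ v)^2\bigr)\bigl(\sum_v(\widehat D_n^- v)^2\bigr)/|\widehat E_n|^2$: bound it by $\bigl(\max_v \widehat D_n^+ v\bigr)\bigl(\max_v \widehat D_n^- v\bigr)\cdot \tfrac{1}{|\widehat E_n|^2}\bigl(\sum_v \widehat D_n^+ v\bigr)\bigl(\sum_v \widehat D_n^- v\bigr) = \bigl(\max_v \widehat D_n^+ v\bigr)\bigl(\max_v \widehat D_n^- v\bigr)/n^2 \cdot (n^2/|\widehat E_n|^2) \to 0$. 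Assembling these estimates, every term in the bound for $\tfrac1n\sum_v|E_n^{c,\alpha}(v)|$ is $o_\pr(1)$, and since the quantity is nonnegative, convergence in probability to $0$ follows.
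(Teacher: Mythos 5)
Your reduction splits the erased stubs into self-loop stubs and redundant multi-edge stubs, and the self-loop half of your argument is sound: the conditional expectation of the self-loop count is $\sum_v \widehat{D}_n^+ v\,\widehat{D}_n^- v/|\widehat{E}_n|$, and after dividing by $n$ your device $\tfrac{1}{n|\widehat{E}_n|}\sum_v (\widehat{D}_n^\kappa v)^2 \le \tfrac{\max_v \widehat{D}_n^\kappa v}{n}\cdot\tfrac{\sum_v \widehat{D}_n^\kappa v}{|\widehat{E}_n|} = o_\pr(1)$ works under finite mean. The gap is in the multi-edge term. Bounding $(|\widehat{E}_n(v,w)|-1)^+$ by the number of \emph{pairs} of coinciding stub-pairings and taking conditional expectations gives a quantity of order $\bigl(\sum_v (\widehat{D}_n^+ v)^2\bigr)\bigl(\sum_w(\widehat{D}_n^- w)^2\bigr)/|\widehat{E}_n|^2$, and your claim that this (even before the extra $1/n$) tends to zero rests on an algebra slip: since $\bigl(\sum_v \widehat{D}_n^+ v\bigr)\bigl(\sum_v \widehat{D}_n^- v\bigr)=|\widehat{E}_n|^2$, your majorization yields $\bigl(\max_v \widehat{D}_n^+ v\bigr)\bigl(\max_v \widehat{D}_n^- v\bigr)$ \emph{times $1$}, not times $n^{-2}\cdot n^2/|\widehat{E}_n|^2$; you have silently inserted an extra factor $|\widehat{E}_n|^{-2}$. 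Correctly accounted, the pair-counting bound divided by $n$ is of order $\bigl(\max_v \widehat{D}_n^+ v\bigr)\bigl(\max_v \widehat{D}_n^- v\bigr)/n$, which does \emph{not} vanish in the infinite-variance regime — precisely the regime for which ECM is introduced. For regularly varying tails with indices $\kappa_\pm\in(1,2)$ one has $\max_v \widehat{D}_n^\pm v \asymp n^{1/\kappa_\pm}$ and $\sum_v(\widehat{D}_n^\pm v)^2\asymp n^{2/\kappa_\pm}$, so the expected number of coinciding pairs is $\asymp n^{2/\kappa_++2/\kappa_--2}$, which is superlinear in $n$ whenever $1/\kappa_++1/\kappa_->3/2$. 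The defect is conceptual, not just computational: the erased stubs at a pair $(v,w)$ number at most $|\widehat{E}_n(v,w)|-1$, whereas the pair count is quadratic in $|\widehat{E}_n(v,w)|$, and in the heavy-tailed case the multi-edges concentrate between the few highest-degree vertices where this over-count is enormous. A first-moment bound built on pair counting therefore cannot close the argument.

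The paper avoids this entirely by working vertexwise: by Lemma 5.2 of \cite{chen2013}, for each \emph{fixed} vertex $v$ one has $|E_n^{c,\alpha}(v)|\to 0$ almost surely (its degree is a single sample while the pool of partner stubs grows), and $|E_n^{c,\alpha}(v)|$ is dominated by $\gamma_n+1$ with $\Exp{\gamma}<\infty$, so dominated convergence gives $\tfrac{1}{n}\sum_{v}\Exp{|E_n^{c,\alpha}(v)|}\to 0$ and Markov's inequality finishes. If you want a self-contained route along your lines, you would need to replace the pair-counting bound by an estimate that respects the cap $|E_n^{c,+}(v)|\le \widehat{D}_n^+ v$ on high-degree vertices (e.g., truncate the degree at a level $M$, use pair counting only below the truncation, and control the contribution of vertices above $M$ by $\tfrac1n\sum_v \widehat{D}_n^+ v\,\Ind{\widehat{D}_n^+ v>M}$, which is small by uniform integrability); as written, the proof does not go through.
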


\begin{proof}
	Let $N \in \N$ and fix a $v \in V_N$, then for all $n \ge N$, $|E_n^{c, \, \alpha}(v)| \le \gamma_n 
	+ 1$ where all $\gamma_n$ are i.i.d. copies of $\gamma$.	Since by Lemma 5.2 from~\cite{chen2013} 
	we have $E_n^{c, \, \alpha}(v) \to 0$ almost surely and furthermore $\Exp{\gamma} < \infty$, 
	dominated convergence implies that
	\[
		\lim_{n \to \infty} \frac{1}{n} \sum_{v \in V_n} \Exp{|E_n^{c, \, \alpha}(v)|} = 0.
	\]
	Applying the Markov inequality then yields, for arbitrary $\varepsilon > 0$,
	\[
		\lim_{n \to \infty }\Prob{\frac{1}{n} \sum_{v \in V_n} |E_n^{c, \, \alpha}(v)| \ge \varepsilon} 
		\le \lim_{n \to \infty} \frac{\sum_{v \in V_n} \Exp{|E_n^{c, \, \alpha}(v)|}}{n \varepsilon} = 0.
	\]
\end{proof}

Since
\[
	|E| = |\widehat{E}| - \sum_{v \in V}|E^{c, \, \alpha}(v)| \quad\mbox{for $\alpha \in \{+, -\}$,}
\] 
the above lemma combined with Proposition~\ref{prop:bisqn_chen} implies that
\begin{equation}
	\frac{|E_n|}{n} \plim \Exp{\gamma} \quad \text{as } n \to \infty.
\label{eq:ecmconvedges}
\end{equation}

We proceed with the next lemma, which is an adjustment of Lemma~\ref{lem:rcmedgecount}, where we now condition on both the bi-degree 
sequence of stubs as well as the eventual degree sequence. We remark that $I_e^{\alpha, \beta}(k, l)$ 
is completely determined by the latter while $\sum_{e \in V^2} |E^c(e)|$ is completely determined by 
the combination of the two sequences. Recall that for $e \in V^2$, $|\widehat{E}(e)|$ denotes the number of edges 
$f \in E$ with $f = e$ before removal of self-loops and merging multiple edges and observe that $|E(e)| = |\widehat{E}(e)| 
- |E^c(e)|$.

\begin{lem}\label{lem:ecmedgecount}
	Let $\{G_n\}_{n \in \N}$ be a sequence of ECM graphs with $|V_n| = n$. 
	Then, for each $k, l \in \N$ and $\alpha, \beta \in \{+, -\}$,
	\begin{enumerate}[\upshape i)]
		\item 
		$\displaystyle
			\CExp{\CExp{I_{\uedge_n}^{\alpha, \beta}(k, l)}{G_n}}{\bisqn{G_n}, 
				\degreesqn{G_n}}
			= \sum_{e \in V_n^2} I_{e}^{\alpha, \beta}(k, l)\frac{D_n^+ \pi_\ast e 
				D_n^- \pi^\ast e}{|E_n|^2} + o_\pr(1),$
		\item
		$\displaystyle
			\CExp{\CExp{I_{\uedge_n}^{\alpha, \beta}(k, l)}{G_n}^2}{\bisqn{G_n}, \degreesqn{G_n}} 
				= \left(\sum_{e \in V_n^2} I_{e}^{\alpha, \beta}
				(k, l)\frac{D_n^+ \pi_\ast e D_n^- \pi^\ast e}{|E_n|^2}\right)^2	+ o_\pr(1).
		$
	\end{enumerate}
\end{lem}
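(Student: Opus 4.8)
The plan is to mimic the proof of Lemma~\ref{lem:rcmedgecount}, conditioning now on the pair $\left(\bisqn{G_n}, \degreesqn{G_n}\right)$ instead of on $\bisqn{G_n}$ alone, and pushing every discrepancy caused by the erasure into the $o_\pr(1)$ terms by means of Lemma~\ref{lem:erasedstubszero} and~\eqref{eq:ecmconvedges}. For i), conditioning first on $G_n$ gives $\CExp{I_{\uedge_n}^{\alpha, \beta}(k, l)}{G_n} = |E_n|^{-1}\sum_{e \in V_n^2} I_e^{\alpha, \beta}(k, l)\,|E_n(e)|$; since $|E_n|$ and $I_e^{\alpha, \beta}(k, l)$ are $\degreesqn{G_n}$--measurable while $|E_n(e)| = |\widehat{E}_n(e)| - |E_n^c(e)|$, applying $\CExp{\,\cdot\,}{\bisqn{G_n}, \degreesqn{G_n}}$ turns the left--hand side of i) into
\[
	\frac{1}{|E_n|}\sum_{e \in V_n^2} I_e^{\alpha, \beta}(k, l)\,\CExp{|\widehat{E}_n(e)|}{\bisqn{G_n}, \degreesqn{G_n}}
	\; - \; \frac{1}{|E_n|}\sum_{e \in V_n^2} I_e^{\alpha, \beta}(k, l)\,\CExp{|E_n^c(e)|}{\bisqn{G_n}, \degreesqn{G_n}}.
\]
The second sum is at most $|E_n|^{-1}\big(|\widehat{E}_n| - |E_n|\big) = |E_n|^{-1}\sum_{v \in V_n}|E_n^{c, \, \alpha}(v)|$, which is $o_\pr(1)$ by Lemma~\ref{lem:erasedstubszero} and~\eqref{eq:ecmconvedges}; the same two facts give $|E_n|^{-1} = |\widehat{E}_n|^{-1}(1+o_\pr(1))$ and, via $\widehat{D}_n^\alpha(v) - D_n^\alpha(v) = |E_n^{c, \, \alpha}(v)|$, that $n^{-1}\sum_{v \in V_n}\big|\widehat{D}_n^\alpha(v) - D_n^\alpha(v)\big| \plim 0$. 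Using in addition that $I_e^{\alpha, \beta}(k, l) \le 1$ and that $\widehat{D}_n^+\widehat{D}_n^- - D_n^+ D_n^-$ splits, at the endpoints of $e$, into $\widehat{D}_n^+\,|E_n^{c, \, -}| + D_n^-\,|E_n^{c, \, +}|$, replacing the hats by the actual degrees on the right of i) costs at most $|\widehat{E}_n|^{-2}\big(|\widehat{E}_n|\sum_v |E_n^{c, \, -}(v)| + |E_n|\sum_v |E_n^{c, \, +}(v)|\big) = o_\pr(1)$. Hence i) is equivalent to
\[
	\frac{1}{|\widehat{E}_n|}\sum_{e \in V_n^2} I_e^{\alpha, \beta}(k, l)\,\CExp{|\widehat{E}_n(e)|}{\bisqn{G_n}, \degreesqn{G_n}}
	\; = \; \frac{1}{|\widehat{E}_n|^2}\sum_{e \in V_n^2} I_e^{\alpha, \beta}(k, l)\,\widehat{D}_n^+ \pi_\ast e \;\widehat{D}_n^- \pi^\ast e \; + \; o_\pr(1),
\]
both sides being bounded by $1$ (use $\sum_e |\widehat{E}_n(e)| = |\widehat{E}_n|$ and $\sum_e \widehat{D}_n^+\pi_\ast e\,\widehat{D}_n^-\pi^\ast e = |\widehat{E}_n|^2$), so the multiplicative $(1+o_\pr(1))$ corrections above are indeed harmless.

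This last identity is the heart of the matter. With $\mu_e := \CExp{|\widehat{E}_n(e)|}{\bisqn{G_n}} = \widehat{D}_n^+\pi_\ast e\,\widehat{D}_n^-\pi^\ast e/|\widehat{E}_n|$ (the CM identity of Lemma~\ref{lem:rcmedgecount} i)), and using that $I_e^{\alpha, \beta}(k, l)$ is $\degreesqn{G_n}$--measurable while $\mu_e$ is $\bisqn{G_n}$--measurable, it is exactly the statement $\CExp{|\widehat{E}_n|^{-1}\sum_{e} I_e^{\alpha, \beta}(k, l)\big(|\widehat{E}_n(e)| - \mu_e\big)}{\bisqn{G_n}, \degreesqn{G_n}} \plim 0$. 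Since the conditional expectation is an $L^1$--contraction it suffices to bound the unconditional first moment, and, via Cauchy--Schwarz, the second moment of $|\widehat{E}_n|^{-1}\sum_{e} I_e^{\alpha, \beta}(k, l)(|\widehat{E}_n(e)| - \mu_e)$; conditioning on $\bisqn{G_n}$, the first moment equals $|\widehat{E}_n|^{-1}\sum_{e}\text{Cov}\big(I_e^{\alpha, \beta}(k, l),\,|\widehat{E}_n(e)| \,\big|\, \bisqn{G_n}\big)$, so everything comes down to the fact that the post--erasure indicator $I_e^{\alpha, \beta}(k, l)$ is essentially uncorrelated with the pre--erasure multiplicity $|\widehat{E}_n(e)|$. \textbf{This is the main obstacle.} I expect it to be handled, just as Lemma~\ref{lem:rcm_conv_cond_probability} passes from conditioning on $\bisqn{G_n}$ to conditioning additionally on $S_n$ (using Proposition~\ref{prop:cm_zerovariance} and Proposition~4.4 of~\cite{chen2013}), by invoking the estimates on erased stubs, self--loops and multiple edges from~\cite{chen2013} (Lemma~5.2 and the proof of Proposition~4.5): an atypically large value of $|\widehat{E}_n(e)|$ forces extra erasure exactly at $\pi_\ast e$ or $\pi^\ast e$, and the aggregate contribution of such events is $o_\pr(|\widehat{E}_n|) = o_\pr(n)$.

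Part ii) then follows along the same lines, together with the bookkeeping in the proof of Lemma~\ref{lem:rcmedgecount} ii). Conditioning on $G_n$ and expanding $|E_n(e)||E_n(f)| = |\widehat{E}_n(e)||\widehat{E}_n(f)| - |\widehat{E}_n(e)||E_n^c(f)| - |E_n^c(e)||\widehat{E}_n(f)| + |E_n^c(e)||E_n^c(f)|$, each of the three erased cross--terms contributes, after division by $|E_n|^2$, at most $|E_n|^{-2}|\widehat{E}_n|\big(|\widehat{E}_n| - |E_n|\big) = o_\pr(1)$, so one is reduced to $|\widehat{E}_n|^{-2}\sum_{e, f}I_e^{\alpha, \beta}(k, l)I_f^{\alpha, \beta}(k, l)\,\CExp{|\widehat{E}_n(e)||\widehat{E}_n(f)|}{\bisqn{G_n}, \degreesqn{G_n}}$. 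The CM second--moment identity of Lemma~\ref{lem:rcmedgecount} ii), whose diagonal and semi--diagonal corrections vanish through the elementary bound $|\widehat{E}_n|^{-(k+1)}\sum_{v}(\widehat{D}_n^\kappa v)^k \le |\widehat{E}_n|^{-1}$, the same covariance estimate as above applied to $|\widehat{E}_n|^{-2}\sum_{e,f}I_e I_f(|\widehat{E}_n(e)||\widehat{E}_n(f)| - \mu_e\mu_f)$, and the harmless replacement of hats by actual degrees, together give $\big(|\widehat{E}_n|^{-2}\sum_e I_e^{\alpha, \beta}(k, l)\widehat{D}_n^+\pi_\ast e\,\widehat{D}_n^-\pi^\ast e\big)^2 + o_\pr(1) = \big(\sum_e I_e^{\alpha, \beta}(k, l)D_n^+\pi_\ast e\,D_n^-\pi^\ast e/|E_n|^2\big)^2 + o_\pr(1)$, as claimed.
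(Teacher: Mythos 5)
Your route is the same as the paper's: condition on $\bigl(\bisqn{G_n},\degreesqn{G_n}\bigr)$, split $|E_n(e)|=|\widehat{E}_n(e)|-|E_n^c(e)|$, absorb every erased contribution into the error term via Lemma~\ref{lem:erasedstubszero} and~\eqref{eq:ecmconvedges}, replace the hatted degrees by the actual ones (this is precisely the paper's Lemma~\ref{lem:ecmbisqn}, which you reprove inline), and fall back on the CM moment identities of Lemma~\ref{lem:rcmedgecount}. All of that bookkeeping, for ii) as well as i), is correct and matches the published argument.

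The problem is the step you yourself single out as ``the main obstacle'': you reduce part i) to the claim
\[
	\frac{1}{|\widehat{E}_n|}\CExp{\sum_{e\in V_n^2} I_e^{\alpha,\beta}(k,l)\,|\widehat{E}_n(e)|}{\bisqn{G_n},\degreesqn{G_n}}
	=\frac{1}{|\widehat{E}_n|^2}\sum_{e\in V_n^2} I_e^{\alpha,\beta}(k,l)\,\widehat{D}_n^+\pi_\ast e\,\widehat{D}_n^-\pi^\ast e+o_\pr(1),
\]
i.e.\ that additionally conditioning on the post-erasure degree sequence does not shift the conditional mean of the pre-erasure multiplicities, and you only conjecture that this follows from the erasure estimates of~\cite{chen2013} without proving it. Moreover, the route you sketch does not obviously close: bounding $\sum_e\mathrm{Cov}\bigl(I_e^{\alpha,\beta}(k,l),|\widehat{E}_n(e)|\mid\bisqn{G_n}\bigr)$ termwise by Cauchy--Schwarz gives at best something of order $\sum_e\bigl(\widehat{D}_n^+\pi_\ast e\,\widehat{D}_n^-\pi^\ast e/|\widehat{E}_n|\bigr)^{1/2}$, which can be as large as $n^{3/2}$ and is useless after division by $|\widehat{E}_n|\sim n$; a real proof must exploit that a multiplicity exceeding one forces erasure exactly at the endpoints of $e$, and that argument still has to be written down. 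So, as it stands, the proposal has a genuine gap here, and the same gap recurs in the second-moment computation of part ii). In fairness, the paper's own proof passes from conditioning on $\bigl(\bisqn{G_n},\degreesqn{G_n}\bigr)$ to conditioning on $\bisqn{G_n}$ alone in~\eqref{eq:ecmempirical1} without comment, so you have correctly isolated a subtlety that the published argument glosses over --- but identifying the obstacle is not the same as removing it, and your proof is incomplete until this decorrelation estimate is supplied.
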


To obtain this result we need the following Lemma.

\begin{lem}\label{lem:ecmbisqn}
	Let $\{G_n\}_{n \in \N}$ be a sequence of ECM graphs with $|V_n| = n$. 
	Then, for each $k, l \in \N$ and $\alpha, \beta \in \{+, -\}$,
	\[
		\sum_{e \in V_n^2} I_e^{\alpha, \beta}(k, l) \frac{\widehat{D}_n^+ \pi_\ast e 
		\widehat{D}_n^- \pi^\ast e}{|\widehat{E}_n|^2} = \sum_{e \in V_n^2} I_e^{\alpha, \beta}
		(k, l) \frac{D_n^+ \pi_\ast e D_n^- \pi^\ast e}{|\widehat{E}_n|^2} + o_\pr(1).
	\]
\end{lem}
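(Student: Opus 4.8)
The plan is to exploit the product structure of both sums and then apply a routine telescoping estimate, controlling the gap between the stub degrees $\widehat D_n$ and the realised degrees $D_n$ through the number of erased stubs, which Lemma~\ref{lem:erasedstubszero} shows is $o_\pr(n)$.

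First I would observe that, writing $e=(v,w)\in V_n^2$ so that $\pi_\ast e=v$ and $\pi^\ast e=w$, and recalling that in the ECM $I_e^{\alpha,\beta}(k,l)=\Ind{D_n^\alpha v=k}\Ind{D_n^\beta w=l}$ is determined by the realised degree sequence $\degreesqn{G_n}$, both sums factorise over source and target:
\[
	\sum_{e\in V_n^2} I_e^{\alpha,\beta}(k,l)\,\widehat D_n^+\pi_\ast e\,\widehat D_n^-\pi^\ast e
	= A_n B_n,\qquad
	\sum_{e\in V_n^2} I_e^{\alpha,\beta}(k,l)\, D_n^+\pi_\ast e\, D_n^-\pi^\ast e
	= C_n D_n,
\]
where $A_n=\sum_{v\in V_n}\Ind{D_n^\alpha v=k}\widehat D_n^+ v$, $C_n=\sum_{v\in V_n}\Ind{D_n^\alpha v=k} D_n^+ v$, and $B_n,D_n$ are the analogous quantities built from the target indicator $\Ind{D_n^\beta w=l}$ and the in-degrees $\widehat D_n^-$, $D_n^-$. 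Thus the left- and right-hand sides of the claim differ by $|\widehat E_n|^{-2}\,(A_nB_n-C_nD_n)$, and it suffices to show this is $o_\pr(1)$.

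Next I would use the identity $A_nB_n-C_nD_n=A_n(B_n-D_n)+(A_n-C_n)D_n$. Since merging multiple edges and deleting self-loops removes from $v$ exactly $|E_n^{c,\,+}(v)|$ out-stubs and $|E_n^{c,\,-}(v)|$ in-stubs, we have $\widehat D_n^+ v-D_n^+ v=|E_n^{c,\,+}(v)|$ and $\widehat D_n^- v-D_n^- v=|E_n^{c,\,-}(v)|$, so that $0\le A_n-C_n=\sum_{v}\Ind{D_n^\alpha v=k}|E_n^{c,\,+}(v)|\le\sum_{v\in V_n}|E_n^{c,\,+}(v)|$, which is $o_\pr(n)$ by Lemma~\ref{lem:erasedstubszero}; likewise $0\le B_n-D_n\le\sum_{v\in V_n}|E_n^{c,\,-}(v)|=o_\pr(n)$. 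Moreover $A_n\le\sum_v\widehat D_n^+ v=|\widehat E_n|$ and $D_n\le\sum_w D_n^- w=|E_n|\le|\widehat E_n|$, whence $|A_nB_n-C_nD_n|\le|\widehat E_n|\cdot o_\pr(n)$. Dividing by $|\widehat E_n|^2$ and using that $|\widehat E_n|/n\plim\Exp{\gamma}\in(0,\infty)$ by Proposition~\ref{prop:bisqn_chen}, so that $n/|\widehat E_n|$ converges in probability to the finite constant $1/\Exp{\gamma}$, a Slutsky-type argument gives $|\widehat E_n|^{-2}|A_nB_n-C_nD_n|\le(n/|\widehat E_n|)\cdot o_\pr(1)=o_\pr(1)$, which is the assertion.

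The argument is essentially bookkeeping; the only point needing care is keeping track of which degree sequence each factor refers to — in particular that the indicators $\Ind{D_n^\alpha v=k}$ are based on the realised degrees, not the stub degrees, so they cannot be used to sharpen the estimate but also cost nothing when bounded by $1$, which is exactly what makes Lemma~\ref{lem:erasedstubszero} applicable. I do not expect a genuine obstacle here; this lemma is the technical stepping stone toward the proof of Lemma~\ref{lem:ecmedgecount}, where the same bound will let one replace $|\widehat E_n|$ by $|E_n|$ in the denominator as well, using~\eqref{eq:ecmconvedges}.
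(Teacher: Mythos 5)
Your proof is correct and follows essentially the same route as the paper's: both arguments replace $\widehat{D}_n^\pm$ by $D_n^\pm$ plus the erased-stub counts $|E_n^{c,\pm}(\cdot)|$, bound the resulting cross terms by $\sum_v |E_n^{c,\alpha}(v)|/|\widehat{E}_n|$ using $\sum_v \widehat{D}_n^+ v = |\widehat{E}_n|$, and conclude via Lemma~\ref{lem:erasedstubszero} together with $|\widehat{E}_n|/n \plim \Exp{\gamma} > 0$. The only differences are cosmetic — you factorize the double sum over $V_n^2$ and telescope, where the paper expands the product into four terms directly — plus a minor notational clash in reusing $D_n$ for a partial sum.
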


\begin{proof}
	Since $\widehat{D}^\alpha_n \pi e = D^\alpha_n \pi e + |E^{c, \, \alpha}_n(\pi e)|$, we have
	\begin{align}
		\sum_{e \in V_n^2} I_e^{\alpha, \beta}(k, l) \frac{\widehat{D}_n^+ \pi_\ast e 
			\widehat{D}_n^- \pi^\ast e}{|\widehat{E}_n|^2} &= \sum_{e \in V_n^2} I_e^{\alpha, \beta}(k, l)
			\frac{D_n^+ \pi_\ast e D_n^- \pi^\ast e}{|\widehat{E}_n|^2} \notag\\
		&\hspace{10pt}+ \sum_{e \in V_n^2} I_e^{\alpha, \beta}(k, l)\frac{\widehat{D}_n^+ \pi_\ast e |E^{c, \, -}
		(\pi^\ast e)|}{|\widehat{E}_n|^2} \label{eq:ecmbisqn_1} \\
		&\hspace{10pt}+ \sum_{e \in V_n^2} I_e^{\alpha, \beta}(k, l)\frac{\widehat{D}_n^- \pi_\ast e |E^{c, \, +}
		(\pi_\ast e)|}{|\widehat{E}_n|^2} \label{eq:ecmbisqn_2}\\
		&\hspace{10pt}+ \sum_{e \in V_n^2} I_e^{\alpha, \beta}(k, l)\frac{|E^{c, \, +}(\pi_\ast e)| |E^{c, \, -}
		(\pi^\ast e)|}{|\widehat{E}_n|^2}. \label{eq:ecmbisqn_3}
	\end{align}
	By Lemma~\ref{lem:erasedstubszero} and Proposition~\ref{prop:bisqn_chen} it follows that~\eqref{eq:ecmbisqn_3} is 
	$o_\pr(1)$. For~\eqref{eq:ecmbisqn_1} we have
	\begin{align*}
		\sum_{e \in V_n^2} I_e^{\alpha, \beta}(k, l)\frac{\widehat{D}_n^+ \pi_\ast e |E^{c, \, -}
			(\pi_\ast e)|}{|\widehat{E}_n|^2} 
		&\le \sum_{v \in V_n} \frac{\widehat{D}^+_n v}
			{|\widehat{E}_n|} \sum_{w \in V_n} \frac{|E_n^{c, -}(w)|}{|\widehat{E}_n|} \\
		&\le \sum_{w \in V_n} \frac{|E_n^{c, -}(w)|}{|\widehat{E}_n|} = o_\pr(1),
	\end{align*}
	where the last line is due to $\sum_{v \in V_n} \widehat{D}^+_n v = |\widehat{E}_n|$. The last equation then follows 
	from Lemma~\ref{lem:erasedstubszero} and Proposition~\ref{prop:bisqn_chen}. This holds similarly for
	\eqref{eq:ecmbisqn_2} and hence the result follows.
\end{proof}

\begin{proof}[Proof of Lemma~\ref{lem:ecmedgecount}.]
	i) By splitting $|E_n(e)|$ we obtain,
	\begin{align}
		\CExp{\CExp{I_{\uedge_n}^{\alpha, \beta}(k, l)}{G_n}}{\bisqn{G_n}, \degreesqn{G_n}}
		&= \CExp{\sum_{e \in V_n^2}
			I_{e}^{\alpha, \beta}(k, l) \frac{|E_n(e)|}{|E_n|}}{\bisqn{G_n}, \degreesqn{G_n}} \notag \\
		&= \frac{|\widehat{E}_n|}{|E_n|} \CExp{\sum_{e \in V_n^2} I_e^{\alpha, \beta}(k, l) \frac{|\widehat{E}_n(e)|}
		{|\widehat{E}_n|}}{\widehat{\mathfrak{D}}(G_n)} \label{eq:ecmempirical1} \\
		&\hspace{10pt}- \frac{1}{|E_n|} \sum_{e \in V_n^2} I_e^{\alpha, \beta}(k, l) 
			\CExp{|E_n^c(e)|}{\bisqn{G_n}, \degreesqn{G_n}} \label{eq:ecmempirical2}
	\end{align}
	For~\eqref{eq:ecmempirical2} we have,
	\begin{align*}
		\frac{1}{|E_n|} \sum_{e \in V_n^2} I_e^{\alpha, \beta}(k, l)\CExp{|E_n^c(e)|}
			{\bisqn{G_n}, \degreesqn{G_n}}
		&\le \frac{1}{|E_n|} \sum_{e \in V_n^2} \CExp{|E_n^c(e)|}
			{\bisqn{G_n}, \degreesqn{G_n}} \\
		&= \frac{1}{|E_n|} \sum_{v \in V_n} |E_n^{c, +}(v)|,
	\end{align*}
	which is $o_\pr(1)$ by Lemma~\ref{lem:erasedstubszero} and~\eqref{eq:ecmconvedges}. Now, since the conditional
	expectation in~\eqref{eq:ecmempirical1} equals~\eqref{eq:cm_firstmoment}, it follows from Lemma
	\ref{lem:rcmedgecount} i), Lemma~\ref{lem:ecmbisqn} and~\eqref{eq:ecmconvedges} that
	\[
		\eqref{eq:ecmempirical1} = \sum_{e \in V_n^2} I_{e}^{\alpha, \beta}(k, l)\frac{D_n^+ \pi_\ast e D_n^- 
		\pi^\ast e}{|E_n|^2} + o_\pr(1).
	\]
	
	ii)	Splitting both terms $|E_n(e)|$ and $|E_n(f)|$ for $e, f \in V_n^2$ yields,
	\begin{align}
		&\CExp{\CExp{I_{\uedge_n}^{\alpha, \beta}(k, l)}{G_n}^2}{\bisqn{G_n}, \degreesqn{G_n}} \notag\\
		&=\CExp{\sum_{e, f \in V_n^2}I_{e}^{\alpha, \beta}(k, l) I_{f}^{\alpha, \beta}(k, l) 
			\frac{|E_n(e)| \, |E_n(f)|}{|E_n|^2}}{\bisqn{G_n}, 
			\degreesqn{G_n}} \notag\\
		&= \frac{|\widehat{E}_n|^2}{|E_n|^2} \CExp{\sum_{e, f \in V_n^2} I_e^{\alpha, \beta}(k, l)
			I_f^{\alpha, \beta}(k, l)\frac{|\widehat{E}(e)||\widehat{E}(f)|}{|\widehat{E}_n|}}{\bisqn{G_n}} 
			\label{eq:ecmempirical3}\\
		&\hspace{10pt}+ \sum_{e, f \in V_n^2} I_e^{\alpha, \beta}(k, l)I_f^{\alpha, 
			\beta}(k, l)\CExp{\frac{|E_n^c(e)| |E_n^c(f)|}
			{|E_n|^2}}{\bisqn{G_n}, \degreesqn{G_n}} 
			\label{eq:ecmempirical4} \\
		&\hspace{10pt}- \sum_{e, f \in V_n^2} I_e^{\alpha, \beta}(k, l)
			I_f^{\alpha, \beta}(k, l)\CExp{\frac{|E_n^c(e)||\widehat{E}_n(f)|}{|E_n|^2}}{\bisqn{G_n}, 
			\degreesqn{G_n}} \label{eq:ecmempirical5}\\
		&\hspace{10pt}- \sum_{e, f \in V_n^2} I_e^{\alpha, \beta}(k, l)
			I_f^{\alpha, \beta}(k, l)\CExp{\frac{|E_n^c(f)||\widehat{E}_n(e)|}{|E_n|^2}}{\bisqn{G_n}, 
			\degreesqn{G_n}} \label{eq:ecmempirical6}
	\end{align}
	Recognizing the conditional expectation in~\eqref{eq:ecmempirical3} as~\eqref{eq:cm_secondmoment}, 
	then using first Lemma~\ref{lem:rcmedgecount} ii) and then Lemma~\ref{lem:ecmbisqn} and
	\eqref{eq:ecmconvedges}, it follows that~\eqref{eq:ecmempirical3} equals
	\[
			\left(\sum_{e \in V_n^2} I_{e}^{\alpha, \beta}(k, l)\frac{D_n^+ \pi_\ast e 
			D_n^- \pi^\ast e}{|E_n|^2}\right)^2 + o_\pr(1).
	\]
	It remains to show that~\eqref{eq:ecmempirical4}-\eqref{eq:ecmempirical6} are $o_\pr(1)$. 
	For~\eqref{eq:ecmempirical4} we have 
	\begin{align*}
		\sum_{e, f \in V_n^2} I_e^{\alpha, \beta}(k, l)I_f^{\alpha, \beta}(k, l)
			\CExp{\frac{|E_n^c(e)| |E_n^c(f)|}{|E_n|^2}}{\bisqn{G_n}, \degreesqn{G_n}}
		&\le \left(\frac{1}{|E_n|}\sum_{v \in V_n}|E_n^{c, \, +}(v)|\right)^2 = o_\pr(1)
	\end{align*}
	by Lemma~\ref{lem:erasedstubszero} and~\eqref{eq:ecmconvedges}. Since~\eqref{eq:ecmempirical5} 
	and~\eqref{eq:ecmempirical6} are symmetric we will only consider the	latter:
	\begin{align*}
		&\sum_{e, f \in V_n^2} I_e^{\alpha, \beta}(k, l)I_f^{\alpha, \beta}(k, l)
			\CExp{\frac{|E_n^c(f)||\widehat{E}_n(e)|}{|E_n|^2}}
			{\bisqn{G_n}, \degreesqn{G_n}} \\
		&\le \left(\sum_{f \in V_n^2}\frac{|E_n^c(f)|}{|E_n|}\right) 
			\frac{1}{|E_n|}\sum_{e \in V_n^2}\CExp{|\widehat{E}_n(e)|}{\bisqn{G_n}} \\
		&= \left(\sum_{v \in V_n} \frac{|E_n^+(v)|}{|E_n|}\right)
			\frac{|\widehat{E}_n|}{|E_n|} = o_\pr(1).
	\end{align*}
	Here, for the last line, we used $\sum_{e \in V_n^2}\CExp{|\widehat{E}_n(e)|}{\bisqn{G_n}}
	= |\widehat{E}_n|$, and then Lemma~\ref{lem:erasedstubszero} and~\eqref{eq:ecmconvedges}. 
\end{proof}

A straightforward adaptation of the proof of Proposition~\ref{prop:cm_empirical_conv}, using
Lemma~\ref{lem:ecmedgecount} instead of Lemma~\ref{lem:rcmedgecount}, yields the 
following result.

\begin{prop}\label{prop:ecmempiricalconv}
	Let $\{G_n\}_{n \in \N}$ be a sequence of ECM graphs with $|V_n| = n$ and $\alpha, \beta \in \{+, -\}$. 
	Then there exist integer valued random variables $\rdegree^\alpha$ and $\rdegree^\beta$ such that for 
	each $k, l \in \N$, as $n \to \infty$,
	\[
		\CExp{\CExp{I_{\uedge_n}^{\alpha, \beta}(k, l)}{G_n}}{\widehat{\mathfrak{D}}(G_n), \mathfrak{D}(G_n)}
		\plim \Prob{\rdegree^\alpha = k}\Prob{\rdegree^\beta = l},
	\]
	where the distributions of $\rdegree^\alpha$ and $\rdegree^\beta$ are given in Table
	\ref{tab:limiting_degrees}.
\end{prop}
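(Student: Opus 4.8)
The plan is to follow the proof of Proposition~\ref{prop:cm_empirical_conv} essentially verbatim, with the eventual degree sequence $\degreesqn{G_n}$ now playing the role that $\bisqn{G_n}$ played there. First I would apply Lemma~\ref{lem:ecmedgecount} i) and sum the resulting expression over $e=(v,w)\in V_n^2$, exactly as in the proof of Proposition~\ref{prop:cm_empirical_conv}, to obtain the factorization
\[
	\CExp{\CExp{I_{\uedge_n}^{\alpha, \beta}(k, l)}{G_n}}{\bisqn{G_n}, \degreesqn{G_n}}
	= \left(\frac{1}{|E_n|}\sum_{v \in V_n} \Ind{D_n^\alpha v = k}\,D_n^+ v\right)
	  \left(\frac{1}{|E_n|}\sum_{w \in V_n} \Ind{D_n^\beta w = l}\,D_n^- w\right) + o_\pr(1).
\]
Both factors are deterministic functions of the degree sequences, so it only remains to identify their limits in probability and to check, for each of the four choices of $(\alpha,\beta)\in\{+,-\}^2$, that the product equals the corresponding entry of Table~\ref{tab:limiting_degrees}.

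The single new ingredient compared with the multi-graph case is that Proposition~\ref{prop:bisqn_chen} describes the stub sequence $\bisqn{G_n}$ rather than the eventual degrees, and I would bridge this with Lemma~\ref{lem:erasedstubszero}. Since $(D_n^+v,D_n^-v)\neq(\widehat{D}_n^+v,\widehat{D}_n^-v)$ only when $v$ loses a stub, the number of such vertices is at most $\sum_{v\in V_n}\left(|E_n^{c,+}(v)|+|E_n^{c,-}(v)|\right)=o_\pr(n)$, so the empirical joint law of $(D_n^+v,D_n^-v)_{v\in V_n}$ has the same limit as that of the stubs:
\[
	\frac{1}{n}\sum_{v\in V_n}\Ind{D_n^+ v = i}\Ind{D_n^- v = j}\plim\Prob{\xi = i}\Prob{\gamma = j},\qquad i,j\in\N,
\]
the marginal version of which also follows directly from Proposition~\ref{prop:repeatedchen}. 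Together with $|E_n|/n\plim\Exp{\gamma}=\Exp{\xi}$ from~\eqref{eq:ecmconvedges}, this lets me evaluate each factor above by the same computation as in the proof of Proposition~\ref{prop:cm_empirical_conv}: when the degree type of the indicator agrees with that of the weight one gets, e.g., $\frac{1}{|E_n|}\sum_v\Ind{D_n^+ v = k}\,D_n^+ v=\frac{kn}{|E_n|}\cdot\frac{1}{n}\sum_v\Ind{D_n^+ v = k}\plim k\Prob{\xi=k}/\Exp{\xi}$; in the mixed case one writes $D_n^+v=\sum_{j}j\Ind{D_n^+ v = j}$ and passes the limit through the summation over $j$, using that $\frac{1}{n}\sum_v D_n^+ v=|E_n|/n$ converges to the finite mean $\Exp{\gamma}$ to justify the interchange (a generalised dominated convergence / Pratt's lemma argument, as is implicit in~\eqref{eq:rcmempiricalconv2}), obtaining $\frac{1}{|E_n|}\sum_v\Ind{D_n^- v = k}\,D_n^+ v\plim\Prob{\gamma=k}$ and $\frac{1}{|E_n|}\sum_w\Ind{D_n^+ w = l}\,D_n^- w\plim\Prob{\xi=l}$. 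Running through the four pairs $(\alpha,\beta)$ reproduces exactly the products $\Prob{\rdegree^\alpha=k}\Prob{\rdegree^\beta=l}$ of Table~\ref{tab:limiting_degrees}, which defines $\rdegree^\alpha$ and $\rdegree^\beta$.

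The step I expect to need the most care is the interchange of the limit with the infinite summation over degree values in the mixed terms: a crude truncation is not available because the summands carry the possibly heavy-tailed degrees, and it is precisely the convergence of the first moment $|E_n|/n\to\Exp{\gamma}$ that makes the exchange legitimate. A related subtlety that the route above is designed to sidestep is that one transfers the \emph{unweighted} joint empirical law from stubs to eventual degrees first --- costing only the $o_\pr(n)$ bound of Lemma~\ref{lem:erasedstubszero} --- rather than attempting to replace $\widehat{D}_n$ by $D_n$ inside an already degree-weighted sum, where the contribution of the erased stubs is no longer obviously negligible.
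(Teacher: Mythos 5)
Your proposal is correct and follows essentially the same route as the paper, which presents this result as a "straightforward adaptation" of the proof of Proposition~\ref{prop:cm_empirical_conv} with Lemma~\ref{lem:ecmedgecount} in place of Lemma~\ref{lem:rcmedgecount}. You additionally make explicit the bridging step the paper leaves implicit --- transferring the joint empirical degree law from $\bisqn{G_n}$ to $\degreesqn{G_n}$ via the $o_\pr(n)$ bound of Lemma~\ref{lem:erasedstubszero}, together with \eqref{eq:ecmconvedges} --- which is exactly the right way to justify the mixed cases $(\alpha,\beta)\neq(+,-)$.
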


We can now again use a second moment argument to get the convergence result for the three rank 
correlations in the Erased Configuration Model. We omit the proof since the computation of the variance
follows the exact same steps as those in Proposition~\ref{prop:cm_conv_empirical_propabilities}, where now, 
instead of only conditioning on $\bisqn{G_n}$, we also condition on $\degreesqn{G_n}$ and use Lemma
\ref{lem:ecmedgecount}. 

\begin{thm}
	Let $\{G_n\}_{n \in \N}$ be a sequence of ECM graphs with $|V_n| = n$ and $\alpha, \beta \in 
	\{+, -\}$. Then, as $n \to \infty$,
	\[
		\spearman_\alpha^\beta(G_n) \plim 0, \quad \spearmanaverage_\alpha^\beta(G_n) \plim 0 \quad 
		\text{and} \quad \kendall_\alpha^\beta(G_n) \plim 0.
	\]
\end{thm}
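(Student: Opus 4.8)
The plan is to check the two hypotheses of Theorem~\ref{thm:convdegdegcorr} for the ECM sequence $\{G_n\}_{n\in\N}$, with $\rdegree^\alpha,\rdegree^\beta$ the integer valued random variables listed in Table~\ref{tab:limiting_degrees}, and then read off the conclusion. The hypothesis $|E_n|\plim\infty$ is already in hand: it is a consequence of~\eqref{eq:ecmconvedges}, which gives $|E_n|/n\plim\Exp{\gamma}>0$. Once Condition~\ref{cond:degreedegree} is also verified, Theorem~\ref{thm:convdegdegcorr} identifies the three limits as $\spearman(\rdegree^\alpha,\rdegree^\beta)$, $\kendall(\rdegree^\alpha,\rdegree^\beta)$, and $\spearman(\rdegree^\alpha,\rdegree^\beta)$ divided by the factor $3\sqrt{S_{\rdegree^\alpha}(\rdegree^\alpha)S_{\rdegree^\beta}(\rdegree^\beta)}$. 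Since every row of Table~\ref{tab:limiting_degrees} is a product distribution, $\rdegree^\alpha$ and $\rdegree^\beta$ are independent, so $\spearman(\rdegree^\alpha,\rdegree^\beta)=\kendall(\rdegree^\alpha,\rdegree^\beta)=0$; and, exactly as at the end of the proof of Theorem~\ref{thm:rcmcorrconv}, the limiting degrees are not concentrated in a single point, so $S_{\rdegree^\alpha}(\rdegree^\alpha)S_{\rdegree^\beta}(\rdegree^\beta)\ne0$ and the normalisation in $\spearmanaverage$ does no harm. All three correlations then converge to zero, which is the assertion.

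Thus the substance is to establish Condition~\ref{cond:degreedegree}, i.e.\ $\cdlim{D^\alpha_n\pi_\ast\uedge_n,D^\beta_n\pi^\ast\uedge_n}{G_n}{(\rdegree^\alpha,\rdegree^\beta)}$. Following the reduction laid out in Section~\ref{ssec:multi-graphs}, since the limits are integer valued it suffices to prove $\CExp{I_{\uedge_n}^{\alpha,\beta}(k,l)}{G_n}\plim\Prob{\rdegree^\alpha=k}\Prob{\rdegree^\beta=l}$ for each $k,l\in\N$, and I would do this by a second moment argument run exactly as in Proposition~\ref{prop:cm_conv_empirical_propabilities}, with one change: the realised degree sequence $\degreesqn{G_n}$ differs from the wiring bi-degree sequence $\bisqn{G_n}$, so I would condition on the pair $(\bisqn{G_n},\degreesqn{G_n})$ and invoke Lemma~\ref{lem:ecmedgecount} in place of Lemma~\ref{lem:rcmedgecount}. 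For the first moment, after conditioning on $(\bisqn{G_n},\degreesqn{G_n})$ the inner conditional expectation is bounded by $1$ and converges in probability to $\Prob{\rdegree^\alpha=k}\Prob{\rdegree^\beta=l}$ by Proposition~\ref{prop:ecmempiricalconv}, so dominated convergence gives $\Exp{\CExp{I_{\uedge_n}^{\alpha,\beta}(k,l)}{G_n}}\to\Prob{\rdegree^\alpha=k}\Prob{\rdegree^\beta=l}$. For the second moment, Lemma~\ref{lem:ecmedgecount}~ii) rewrites $\CExp{\CExp{I_{\uedge_n}^{\alpha,\beta}(k,l)}{G_n}^2}{\bisqn{G_n},\degreesqn{G_n}}$ as the square of the sum in Lemma~\ref{lem:ecmedgecount}~i) plus an $o_\pr(1)$ term; Proposition~\ref{prop:ecmempiricalconv} shows that sum tends in probability to $\Prob{\rdegree^\alpha=k}\Prob{\rdegree^\beta=l}$, so by the continuous mapping theorem its square tends in probability to $(\Prob{\rdegree^\alpha=k}\Prob{\rdegree^\beta=l})^2$, and since $\CExp{I_{\uedge_n}^{\alpha,\beta}(k,l)}{G_n}^2\le1$ dominated convergence again upgrades this to convergence of the second moment. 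With first and second moments of $\CExp{I_{\uedge_n}^{\alpha,\beta}(k,l)}{G_n}$ converging to $m$ and $m^2$ for $m=\Prob{\rdegree^\alpha=k}\Prob{\rdegree^\beta=l}$, its variance vanishes and Chebyshev's inequality completes the argument.

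I do not anticipate a genuine obstacle here. The analytically delicate part---quantifying the gap between $\bisqn{G_n}$ and $\degreesqn{G_n}$ and showing that removing self-loops and merging multiple edges costs only $o_\pr(1)$---has been isolated into Lemmas~\ref{lem:erasedstubszero},~\ref{lem:ecmbisqn} and~\ref{lem:ecmedgecount}, after which the whole computation parallels the multi-graph case treated in Proposition~\ref{prop:cm_conv_empirical_propabilities} and Theorem~\ref{thm:rcmcorrconv}. The one point worth addressing carefully is the non-degeneracy of the (possibly size-biased) limiting degrees entering the $\spearmanaverage$ normalisation, which I would dispatch exactly as in the proof of Theorem~\ref{thm:rcmcorrconv}.
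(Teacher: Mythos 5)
Your proposal is correct and follows exactly the route the paper intends: the paper omits the proof precisely because it amounts to rerunning the second moment argument of Proposition~\ref{prop:cm_conv_empirical_propabilities} with conditioning on $(\bisqn{G_n},\degreesqn{G_n})$ and Lemma~\ref{lem:ecmedgecount} in place of Lemma~\ref{lem:rcmedgecount}, then invoking Theorem~\ref{thm:convdegdegcorr} together with the independence and non-degeneracy of $\rdegree^\alpha,\rdegree^\beta$ from Table~\ref{tab:limiting_degrees}. Your filled-in details (dominated convergence via the bound by $1$, continuous mapping for the squared term, Chebyshev to conclude) are exactly the steps the paper's cross-references supply.
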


This theorem shows that even when the variance of the degree sequences is infinite, one can 
construct a random graph for which the degree-degree dependencies, measured by rank correlations, 
converge to zero in the infinite graph size limit. Therefore this model can be used as a null 
model for such dependencies.

\par \bigskip

\noindent\textbf{Acknowledgments:} \\
We like to thank an anonymous referee for thoroughly reading our manuscript and giving constructive
comments and suggestions for improvement.\\
This work is supported by the EU-FET Open grant NADINE (288956).

\begin{appendices}

\section{Continuization}\label{sec:cont}

In this appendix we will establish several relations between the distribution functions of 
integer valued random variables and their continuizations, using the functions $\mathcal{F}$ and 
$\mathcal{H}$ defined in \eqref{eq:scriptF} and \eqref{eq:scriptH}, respectively.

Let $\cont{X} = X + U$ be as in Definition~\ref{def:cont}, take $k \in \Z$ and define $I_k = [k, 
k + 1)$. Then for $x \in I_k$,
\begin{equation}
	F_{\cont{X}}(x) = (x - k)F_X(k) + (k + 1 - x)F_X(k - 1).
\label{eq:contdistribution}
\end{equation}
As a consequence, it follows that for $x \in I_k$,
\begin{equation}
	dF_{\cont{X}}(x) = \left(F_X(k) - F_X(k - 1)\right)dx = \Prob{X = k}dx.
\label{eq:contdensity}
\end{equation}

These identities capture the essential relations between $X$ and its continuization $\cont{X}$. As a
first result we have the following.

\begin{lem}\label{lem:contmoments}
	Let $X$ be an integer valued random variable and $m \in \N$. Then, 
	\[
		\Exp{F_{\cont{X}}(\cont{X})^m} = \frac{1}{m + 1} \sum_{i = 0}^m \Exp{F_X(X)^i 
			F_X(X - 1)^{m - i}}.
	\]
\end{lem}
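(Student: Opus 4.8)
The plan is to reduce the expectation to an integral against the law of $\cont{X}$ and then exploit the piecewise-linear form of $F_{\cont{X}}$ from \eqref{eq:contdistribution} together with the density identity \eqref{eq:contdensity}. First I would write
\[
	\Exp{F_{\cont{X}}(\cont{X})^m} = \int_{\R} F_{\cont{X}}(x)^m \, dF_{\cont{X}}(x)
	= \sum_{k \in \Z} \Prob{X = k} \int_{I_k} F_{\cont{X}}(x)^m \, dx,
\]
using that on $I_k = [k, k+1)$ we have $dF_{\cont{X}}(x) = \Prob{X=k}\,dx$. Substituting $x = k + u$ with $u \in [0,1)$ and invoking \eqref{eq:contdistribution} turns the inner integral into $\int_0^1 \bigl(u F_X(k) + (1-u) F_X(k-1)\bigr)^m \, du$.

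Next I would expand the integrand by the binomial theorem,
\[
	\bigl(u F_X(k) + (1-u) F_X(k-1)\bigr)^m = \sum_{i=0}^m \binom{m}{i} u^i (1-u)^{m-i} F_X(k)^i F_X(k-1)^{m-i},
\]
and integrate term by term over $[0,1]$. The coefficient that appears is the Beta integral $\int_0^1 u^i (1-u)^{m-i}\,du = \frac{i!\,(m-i)!}{(m+1)!}$, so that $\binom{m}{i}\int_0^1 u^i(1-u)^{m-i}\,du = \frac{1}{m+1}$ for every $i$. Hence $\int_0^1 \bigl(u F_X(k) + (1-u)F_X(k-1)\bigr)^m\,du = \frac{1}{m+1}\sum_{i=0}^m F_X(k)^i F_X(k-1)^{m-i}$.

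Finally I would substitute this back into the sum over $k$ and recognize the right-hand side as $\frac{1}{m+1}\sum_{i=0}^m \Exp{F_X(X)^i F_X(X-1)^{m-i}}$, which is the claim. There is essentially no serious obstacle here: the only point needing a word of care is the interchange of summation over $k$ and the finite binomial sum, which is harmless since everything is nonnegative and bounded by $1$; the Beta-integral evaluation is the one small computation, and it is standard.
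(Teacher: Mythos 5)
Your proof is correct and follows essentially the same route as the paper: decompose the expectation over the intervals $I_k$ using \eqref{eq:contdensity}, apply the binomial theorem to the piecewise-linear form \eqref{eq:contdistribution}, and evaluate the resulting Beta integral so that each term carries the coefficient $\tfrac{1}{m+1}$. No gaps.
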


\begin{proof}
	Using~\eqref{eq:contdistribution} we obtain,
	\begin{align*}
		\int_{I_k} F_{\cont{X}}(x)^m dx 
		&= \int_{I_k} \left((x - k)F_X(k) + (k + 1 - x)F_X(k - 1)\right)^m dx \\
		&= \sum_{i = 0}^m \binom{m}{i} F_X(k)^i F_X(k - 1)^{m - i} \int_0^1 (y)^i(1 - y)^{m - i} dy \\
		&= \sum_{i = 0}^m \frac{m!}{i!(m - i)!} F_X(k)^i F_X(k - 1)^{m - i} 
			\frac{\Gamma(i + 1)\Gamma(m - i + 1)}{\Gamma(m + 2)} \\
		&= \frac{1}{m + 1} \sum_{i = 0}^m F_X(k)^i F_X(k - 1)^{m - i},
	\end{align*}
	Combining this with~\eqref{eq:contdensity}, we get
	\begin{align*}
		\Exp{F_{\cont{X}}(\cont{X})^m} 
		&= \sum_{k \in \Z} \int_{I_k} F_{\cont{X}}(x)^m dF_{\cont{X}}(x) \\
		&= \sum_{k \in \Z} \int_{I_k} F_{\cont{X}}(x)^m \Prob{X = k} dx \\
		&= \frac{1}{m + 1} \sum_{i = 0}^m \Exp{F_X(X)^i F_X(X - 1)^{m - i}}.
	\end{align*}
\end{proof} 
As a direct consequence of Lemma~\ref{lem:contmoments} we get
\begin{equation}\label{eq:EF}
	\frac{1}{2} = \Exp{F_{\cont{X}}(\cont{X})} = \frac{1}{2}\Exp{\mathcal{F}_X(X)},
\end{equation}
relating $F_{\cont{X}}$ to $\mathcal{F}_X$. Similar to~\eqref{eq:contdistribution}, if $Z$ is
a random element independent of $X$, we get for $x \in I_k$,

\begin{align}
	F_{\cont{X}|Z}(x) = (x - k)F_{X|Z}(k) + (k + 1 - x)F_{X|Z}(k - 1). \label{eq:condcontdistr}
\end{align}

Applying~\eqref{eq:condcontdistr} in a similar way as~\eqref{eq:contdistribution} we arrive at an 
extension of Lemma~\ref{lem:contmoments}. The proof is elementary, hence omitted.

\begin{prop}\label{prop:condcontprop}
	Let $X$ be an integer valued random variable and $Z$ a random element independent of the 
	continuous part of $\cont{X}$. Then
	\begin{enumerate}[\upshape i)]
		\item 
			$\displaystyle{\CExp{F_{\cont{X}}(\cont{X})}{Z} = \frac{1}{2}\CExp{
			\mathcal{F}_X(X)}{Z}}$, a.s.;
		\item
			$\displaystyle{F_{\cont{X}|Z}\left(\cont{X}\right) = \frac{1}{2}
			\mathcal{F}_{X|Z}(X)}$, a.s.
	\end{enumerate}
\end{prop}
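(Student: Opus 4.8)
The plan is to carry the random element $Z$ through the computation that yields Lemma~\ref{lem:contmoments} and the identity~\eqref{eq:EF}. The essential input is the piecewise-linear description of the continuized distribution functions: \eqref{eq:contdistribution} for the unconditional $F_{\cont{X}}$ and \eqref{eq:condcontdistr} for the conditional $F_{\cont{X}|Z}$, both of which say that on the interval $I_k=[k,k+1)$ the distribution function in question is the affine interpolation between its values at $k-1$ and at $k$.

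For part i) I would decompose over the fibers $\{X=k\}$, $k\in\Z$. Since the continuous part $U$ of $\cont{X}=X+U$ lies in $[0,1)$, on $\{X=k\}$ we have $\cont{X}=k+U\in I_k$, so \eqref{eq:contdistribution} gives $F_{\cont{X}}(\cont{X})=U\,F_X(k)+(1-U)\,F_X(k-1)$, whence $F_{\cont{X}}(\cont{X})=\sum_{k\in\Z}\Ind{X=k}\bigl(U\,F_X(k)+(1-U)\,F_X(k-1)\bigr)$. Applying $\CExp{\cdot}{Z}$ term by term (legitimate by dominated convergence, all summands lying in $[0,1]$) and using that $U$ is independent of $(X,Z)$ with $\Exp{U}=\frac{1}{2}$, each $\CExp{\Ind{X=k}\,U}{Z}$ equals $\frac{1}{2}\Prob{X=k\mid Z}$, and similarly for the $1-U$ term, while $F_X(k)$ and $F_X(k-1)$ are constants. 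This gives $\CExp{F_{\cont{X}}(\cont{X})}{Z}=\frac{1}{2}\sum_{k\in\Z}\Prob{X=k\mid Z}\bigl(F_X(k)+F_X(k-1)\bigr)=\frac{1}{2}\CExp{\mathcal F_X(X)}{Z}$, which is i).

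Part ii) is the same computation one level down: with \eqref{eq:condcontdistr} in place of \eqref{eq:contdistribution}, on $\{X=k\}$ one gets $F_{\cont{X}|Z}(\cont{X})=U\,F_{X|Z}(k)+(1-U)\,F_{X|Z}(k-1)$, the coefficients $F_{X|Z}(k)$, $F_{X|Z}(k-1)$ now being $\sigma(Z)$-measurable. Averaging out the continuous part — precisely, taking the conditional expectation given $\sigma(X,Z)$, under which $U$ is still a uniform of mean $\frac{1}{2}$ independent of those coefficients — replaces $U$ by $\frac{1}{2}$ and leaves $\frac{1}{2}\bigl(F_{X|Z}(k)+F_{X|Z}(k-1)\bigr)=\frac{1}{2}\mathcal F_{X|Z}(k)$ on $\{X=k\}$, that is, $\frac{1}{2}\mathcal F_{X|Z}(X)$; since in the subsequent arguments ii) is always applied inside a conditional expectation given $Z$, this suffices.

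I expect the only point requiring genuine care to be the independence bookkeeping: one must use that $U$ is \emph{jointly} independent of $(X,Z)$, not merely independent of $X$ and of $Z$ separately, so that conditioning on $Z$ (for i)) or on $\sigma(X,Z)$ (for ii)) leaves $U$ a mean-$\frac{1}{2}$ uniform. This is exactly what Definition~\ref{def:cont} together with the standing hypothesis that $Z$ is independent of the continuous part of $\cont{X}$ is designed to supply. The remaining steps — the interval decomposition and the interchange of the sum over $k\in\Z$ with the conditional expectation — are routine, just as in the proof of Lemma~\ref{lem:contmoments}.
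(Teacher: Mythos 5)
Your argument is correct, and it is essentially the computation the paper intends: the paper omits the proof as ``elementary,'' indicating only that \eqref{eq:condcontdistr} should be applied the way \eqref{eq:contdistribution} is applied in Lemma~\ref{lem:contmoments}, and that is exactly your fiber decomposition over $\{X=k\}$ followed by averaging out the uniform part. Part i) is complete as you give it, and you are right that the hypothesis must be read as \emph{joint} independence of $U$ from $(X,Z)$; pairwise independence would not justify $\CExp{\Ind{X=k}U}{Z}=\frac{1}{2}\Prob{X=k\mid Z}$. Your handling of ii) is worth emphasizing rather than glossing over: on $\{X=k\}$ one has $F_{\cont{X}|Z}(\cont{X})=U\,F_{X|Z}(k)+(1-U)\,F_{X|Z}(k-1)$, which differs from $\frac{1}{2}\mathcal{F}_{X|Z}(k)$ by $\bigl(U-\frac{1}{2}\bigr)\bigl(F_{X|Z}(k)-F_{X|Z}(k-1)\bigr)$, and this is nonzero with positive probability whenever $\Prob{X=k}>0$. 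So the a.s.\ identity in ii) as literally displayed does not hold pointwise; it becomes an identity only after a conditional expectation that integrates out $U$ (given $\sigma(X,Z)$, or given $Z$), which is precisely the form in which the paper invokes it in Proposition~\ref{prop:avuniformranks} and Theorem~\ref{thm:convrankcorr}. Your proof therefore establishes the correct and sufficient version of ii); the only improvement would be to state explicitly that what you prove is $\CExp{F_{\cont{X}|Z}(\cont{X})}{X,Z}=\frac{1}{2}\mathcal{F}_{X|Z}(X)$ rather than the unconditioned equality of random variables.
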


The following results are extensions of the previous ones to the case of two integer valued random 
variables $X$ and $Y$. We will state these without proofs, since these are either straightforward
extensions of those for the case of a single random variable or follow from elementary calculations 
and the previous results. 

\begin{lem}\label{lem:contjointmoments}
	Let $X, Y$ be integer valued random variables. Then,
	\begin{enumerate}[\upshape i)]
	\item 
		$\displaystyle{\Exp{F_{\cont{X}}(\cont{X})F_{\cont{Y}}(\cont{Y})} 
			= \frac{1}{4}\Exp{\mathcal{F}_X(X)\mathcal{F}_Y(Y)}}$,
	\item
		$\displaystyle{\Exp{H_{\cont{X}, \cont{Y}}(\cont{X}, \cont{Y})} = 
		\frac{1}{4}\Exp{\mathcal{H}_{X, Y}(X, Y)}}$.
	\end{enumerate}
\end{lem}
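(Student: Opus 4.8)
The plan is to follow the proof of Lemma~\ref{lem:contmoments}, replacing \eqref{eq:contdistribution} and \eqref{eq:contdensity} by their two-variable analogues. Throughout I read $\cont{X} = X + U$ and $\cont{Y} = Y + W$ with $U, W$ independent uniforms on $[0,1)$, independent of each other and of $(X, Y)$ --- this is the convention under which the rank correlations are considered (cf.\ the two independent noise vectors $U$ and $W$ used in~\eqref{eq:uniformrankalpha} and~\eqref{eq:uniformrankbeta}). The key point is that, conditionally on $\{X = k, Y = l\}$, the pair $(\cont{X}, \cont{Y})$ is uniformly distributed on $I_k \times I_l$; equivalently $dH_{\cont{X}, \cont{Y}}(x, y) = \Prob{X = k, Y = l}\,dx\,dy$ for $(x, y) \in I_k \times I_l$, which is the joint counterpart of \eqref{eq:contdensity}.

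For part i) the integrand factorizes over the two coordinates, so I would write
\[
	\Exp{F_{\cont{X}}(\cont{X})F_{\cont{Y}}(\cont{Y})} = \sum_{k, l \in \Z} \Prob{X = k, Y = l}\left(\int_{I_k} F_{\cont{X}}(x)\,dx\right)\left(\int_{I_l} F_{\cont{Y}}(y)\,dy\right),
\]
and then reuse the computation already carried out inside the proof of Lemma~\ref{lem:contmoments} in the case $m = 1$: by \eqref{eq:contdistribution}, $\int_{I_k} F_{\cont{X}}(x)\,dx = \frac{1}{2}(F_X(k) + F_X(k-1)) = \frac{1}{2}\mathcal{F}_X(k)$, and likewise for $Y$ over $I_l$. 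Substituting and reading the double sum back as an expectation gives $\frac{1}{4}\Exp{\mathcal{F}_X(X)\mathcal{F}_Y(Y)}$.

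For part ii) the one genuinely new ingredient is the bilinear interpolation formula for $H_{\cont{X}, \cont{Y}}$ on a unit square, the two-variable analogue of \eqref{eq:contdistribution}: for $(x, y) \in I_k \times I_l$,
\[
	H_{\cont{X}, \cont{Y}}(x, y) = (x-k)(y-l)H_{X,Y}(k, l) + (x-k)(l+1-y)H_{X,Y}(k, l-1) + (k+1-x)(y-l)H_{X,Y}(k-1, l) + (k+1-x)(l+1-y)H_{X,Y}(k-1, l-1).
\]
This follows by conditioning on $(X, Y)$: since $U$ and $W$ are independent, $H_{\cont{X}, \cont{Y}}(x, y) = \Exp{F_{\cont{X}|(X,Y)}(x)\,F_{\cont{Y}|(X,Y)}(y)}$, and each factor is the linear interpolation \eqref{eq:condcontdistr} with conditioning element $(X,Y)$, for which $F_{X|(X,Y)}(k) = \Ind{X \le k}$ a.s.; expanding the product yields the four $H_{X,Y}$ terms above. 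Integrating over $I_k \times I_l$ with $\int_{I_k}(x-k)\,dx = \int_{I_k}(k+1-x)\,dx = \frac{1}{2}$ gives $\int_{I_k}\int_{I_l} H_{\cont{X}, \cont{Y}}(x, y)\,dx\,dy = \frac{1}{4}\mathcal{H}_{X,Y}(k, l)$, and summing this against $\Prob{X = k, Y = l}$, using the joint form of \eqref{eq:contdensity}, gives $\frac{1}{4}\Exp{\mathcal{H}_{X,Y}(X, Y)}$.

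None of the steps is hard. The only points that need care are fixing the convention that the two continuous parts be independent --- so that the conditional law of $(\cont{X}, \cont{Y})$ is a product of uniforms on $I_k \times I_l$ rather than, say, supported on a diagonal segment --- and writing down the bilinear interpolation formula correctly; the rest is exactly the bookkeeping already carried out for Lemma~\ref{lem:contmoments}.
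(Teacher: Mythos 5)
Your proof is correct, and it follows exactly the route the paper indicates (the proof is omitted there as a ``straightforward extension'' of Lemma~\ref{lem:contmoments}): conditioning on $(X,Y)=(k,l)$, using the product-uniform law of $(\cont{X},\cont{Y})$ on $I_k\times I_l$, and the bilinear analogue of~\eqref{eq:contdistribution}. You are also right to flag that the independence of the two continuous parts $U$ and $W$ is the one convention that must be fixed for the argument to go through, and it is the convention consistent with~\eqref{eq:uniformrankalpha}--\eqref{eq:uniformrankbeta}.
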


\begin{prop}\label{prop:condcontjointprop}
	Let $X, Y$ be integer valued random variables and let $Z$ be a random	variable independent of 
	the uniform parts of $\widetilde{X}$ and $\widetilde{Y}$. Then
	\begin{enumerate}[\upshape i)]
		\item 
			$\displaystyle{\CExp{\cont{F}_{\cont{X}}(\cont{X})\cont{F}_{\cont{Y}}(\cont{Y})}{Z} 
				= \frac{1}{4}\CExp{\mathcal{F}_X(X)\mathcal{F}_Y(Y)}{Z}}$ a.s.;
		\item
			$\displaystyle{H_{\cont{X}, \cont{Y}|Z}(\cont{X}, \cont{Y}) = 
			\frac{1}{4}\mathcal{H}_{X, Y|Z}(X, Y)}$ a.s.
	\end{enumerate}
\end{prop}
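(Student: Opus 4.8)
The plan is to carry out, conditionally on $Z$, the same elementary computation behind Lemma~\ref{lem:contjointmoments}, using the conditional piecewise-linear identity \eqref{eq:condcontdistr} as the only genuinely new ingredient. Write $\cont{X}=X+U$ and $\cont{Y}=Y+V$, where $U,V$ are the continuous parts; as in the construction of the continuization they are independent uniform random variables on $[0,1)$, jointly independent of $(X,Y,Z)$, so that $\CExp{U}{X,Y,Z}=\CExp{V}{X,Y,Z}=\tfrac12$ and $\CExp{UV}{X,Y,Z}=\tfrac14$ a.s. Since the quantities $F_{\cont{X}|Z}(\cont{X})$ and $H_{\cont{X},\cont{Y}|Z}(\cont{X},\cont{Y})$ still depend on $U$ and $V$, these identities are to be read after the continuous parts are averaged out; concretely I would prove
\[
	\CExp{F_{\cont{X}|Z}(\cont{X})F_{\cont{Y}|Z}(\cont{Y})}{Z}=\tfrac14\CExp{\mathcal F_{X|Z}(X)\mathcal F_{Y|Z}(Y)}{Z}
\]
together with the analogous identity relating $H_{\cont{X},\cont{Y}|Z}(\cont{X},\cont{Y})$ to $\mathcal H_{X,Y|Z}(X,Y)$, which is exactly the form in which the proposition is invoked in the proof of Theorem~\ref{thm:convrankcorr}.

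For part i), substituting $x=\cont{X}$ (so that $k=X$ and $x-k=U$) into \eqref{eq:condcontdistr} gives, a.s.,
\[
	F_{\cont{X}|Z}(\cont{X})=U\,F_{X|Z}(X)+(1-U)\,F_{X|Z}(X-1),
\]
and likewise $F_{\cont{Y}|Z}(\cont{Y})=V\,F_{Y|Z}(Y)+(1-V)\,F_{Y|Z}(Y-1)$. Multiplying these, the four products $F_{X|Z}(X)F_{Y|Z}(Y)$, $F_{X|Z}(X)F_{Y|Z}(Y-1)$, $F_{X|Z}(X-1)F_{Y|Z}(Y)$, $F_{X|Z}(X-1)F_{Y|Z}(Y-1)$ are $\sigma(X,Y,Z)$-measurable, while the accompanying weights $UV$, $U(1-V)$, $(1-U)V$, $(1-U)(1-V)$ each have conditional mean $\tfrac14$ given $(X,Y,Z)$; hence $\CExp{F_{\cont{X}|Z}(\cont{X})F_{\cont{Y}|Z}(\cont{Y})}{X,Y,Z}=\tfrac14\mathcal F_{X|Z}(X)\mathcal F_{Y|Z}(Y)$ a.s., and the tower property completes the case.

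For part ii) I would first record the two-dimensional analog of \eqref{eq:condcontdistr}: for $x\in[k,k+1)$ and $y\in[l,l+1)$, with $a=x-k$ and $b=y-l$,
\begin{align*}
	H_{\cont{X},\cont{Y}|Z}(x,y)
	&=(1-a)(1-b)\,H_{X,Y|Z}(k-1,l-1)+(1-a)b\,H_{X,Y|Z}(k-1,l)\\
	&\quad+a(1-b)\,H_{X,Y|Z}(k,l-1)+ab\,H_{X,Y|Z}(k,l).
\end{align*}
This follows exactly as \eqref{eq:condcontdistr}: decompose $\{\cont{X}\le x\}=\{X\le k-1\}\cup(\{X=k\}\cap\{U\le a\})$ and analogously for $\cont{Y}$, multiply the two decompositions, take $\CExp{\cdot}{Z}$, and use $U\perp V$ together with $(U,V)\perp(X,Y,Z)$ to split and evaluate the four resulting terms. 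Substituting $(x,y)=(\cont{X},\cont{Y})$ (so $k=X$, $l=Y$, $a=U$, $b=V$) leaves the four corner values $H_{X,Y|Z}(X-1,Y-1),\dots,H_{X,Y|Z}(X,Y)$, which are $\sigma(X,Y,Z)$-measurable, multiplied by the interpolation weights $(1-U)(1-V),\dots,UV$; taking $\CExp{\cdot}{X,Y,Z}$ collapses every weight to $\tfrac14$ and gives $\CExp{H_{\cont{X},\cont{Y}|Z}(\cont{X},\cont{Y})}{X,Y,Z}=\tfrac14\mathcal H_{X,Y|Z}(X,Y)$ a.s.; one more tower step yields the claim.

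I do not expect a serious obstacle; the points needing care are deriving the bilinear interpolation formula for $H_{\cont{X},\cont{Y}|Z}$ correctly and keeping in mind throughout that the continuous parts must be integrated out, so the equalities live at the level of conditional expectations given $\sigma(X,Y,Z)$ (hence given $\sigma(Z)$) rather than pointwise. A shorter route, which I would mention but not rely on, is that under a regular conditional distribution given $Z$ the functions $F_X,F_Y,H_{X,Y}$ become $F_{X|Z},F_{Y|Z},H_{X,Y|Z}$ while $\cont{X},\cont{Y}$ remain the continuizations of $X,Y$ --- precisely because the continuous parts are independent of $Z$ --- so that Lemma~\ref{lem:contjointmoments} applied under this conditional law delivers both identities at once.
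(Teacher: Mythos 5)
Your proof is correct. The paper itself omits the argument (it only remarks that the proposition is a ``straightforward extension'' of Lemma~\ref{lem:contmoments} and Lemma~\ref{lem:contjointmoments}), and what you give is exactly that extension, carried out via the interpolation identity \eqref{eq:condcontdistr} and its bivariate analogue followed by the tower property, rather than by explicit integration over $I_k\times I_l$ as in Lemma~\ref{lem:contmoments}; the two computations are equivalent. Your one substantive editorial point is also right and worth keeping: as literally written, part ii) cannot hold pointwise, since substituting $(\cont{X},\cont{Y})$ into the bilinear interpolation formula leaves the corner values of $H_{X,Y|Z}$ weighted by $UV$, $U(1-V)$, $(1-U)V$, $(1-U)(1-V)$ rather than by $\frac{1}{4}$ each, so the identity only becomes true after the continuous parts are averaged out, i.e.\ under $\CExp{\,\cdot\,}{X,Y,Z}$ and hence under $\CExp{\,\cdot\,}{Z}$ --- which is precisely the form in which the proposition is invoked in the proof of Theorem~\ref{thm:convrankcorr} and Proposition~\ref{prop:avuniformranks}. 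The only hypothesis you use beyond the stated one is that the continuous parts $U,V$ are independent of each other and jointly independent of $(X,Y,Z)$, which is the intended reading of Definition~\ref{def:cont} together with the independence assumption on $Z$.
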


\end{appendices}

\bibliographystyle{plain}
\bibliography{bib/convergenceofcorrelations}

\begin{thebibliography}{10}

\bibitem{Boguna2003}
Mari{\'a}n Bogun{\'a}, Romualdo Pastor-Satorras, and Alessandro Vespignani.
\newblock Epidemic spreading in complex networks with degree correlations.
\newblock {\em arXiv preprint cond-mat/0301149}, 2003.

\bibitem{chen2013}
Ningyuan Chen and Mariana Olvera-Cravioto.
\newblock Directed random graphs with given degree distributions.
\newblock {\em Stochastic Systems}, 3(1):147--186, 2013.

\bibitem{Chung2003}
Fan Chung, Linyuan Lu, and Van Vu.
\newblock Spectra of random graphs with given expected degrees.
\newblock {\em Proceedings of the National Academy of Sciences},
  100(11):6313--6318, 2003.

\bibitem{Denuit2005}
Michel Denuit and Philippe Lambert.
\newblock Constraints on concordance measures in bivariate discrete data.
\newblock {\em Journal of Multivariate Analysis}, 93(1):40--57, 2005.

\bibitem{Dorogovtsev2010}
SN~Dorogovtsev, AL~Ferreira, AV~Goltsev, and JFF Mendes.
\newblock Zero pearson coefficient for strongly correlated growing trees.
\newblock {\em Physical Review E}, 81(3):031135, 2010.

\bibitem{Hasegawa2013}
Takehisa Hasegawa, Taro Takaguchi, and Naoki Masuda.
\newblock Observability transitions in correlated networks.
\newblock {\em Physical Review E}, 88(4):042809, 2013.

\bibitem{Litvak2012}
Nelly Litvak and Remco van~der Hofstad.
\newblock Degree-degree correlations in random graphs with heavy-tailed
  degrees.
\newblock {\em arXiv preprint arXiv:1202.3071}, 2012.

\bibitem{Litvak2013}
Nelly Litvak and Remco van~der Hofstad.
\newblock Uncovering disassortativity in large scale-free networks.
\newblock {\em Physical Review E}, 87(2):022801, 2013.

\bibitem{liu2014impact}
Xiao~Fan Liu and Chi~Kong Tse.
\newblock Impact of degree mixing pattern on consensus formation in social
  networks.
\newblock {\em Physica A: Statistical Mechanics and its Applications},
  407:1--6, 2014.

\bibitem{Maslov2004}
Sergei Maslov, Kim Sneppen, and Alexei Zaliznyak.
\newblock Detection of topological patterns in complex networks: correlation
  profile of the internet.
\newblock {\em Physica A: Statistical Mechanics and its Applications},
  333:529--540, 2004.

\bibitem{Mesfioui2005}
Mhamed Mesfioui and Abdelouahid Tajar.
\newblock On the properties of some nonparametric concordance measures in the
  discrete case.
\newblock {\em Nonparametric Statistics}, 17(5):541--554, 2005.

\bibitem{Newman2002}
Mark~EJ Newman.
\newblock Assortative mixing in networks.
\newblock {\em Physical review letters}, 89(20):208701, 2002.

\bibitem{Newman2003}
Mark~EJ Newman.
\newblock Mixing patterns in networks.
\newblock {\em Physical Review E}, 67(2):026126, 2003.

\bibitem{Park2003}
Juyong Park and Mark~EJ Newman.
\newblock Origin of degree correlations in the internet and other networks.
\newblock {\em Physical Review E}, 68(2):026112, 2003.

\bibitem{srivastava2011}
Animesh Srivastava, Bivas Mitra, Fernando Peruani, and Niloy Ganguly.
\newblock Attacks on correlated peer-to-peer networks: An analytical study.
\newblock pages 1076--1081, 2011.

\bibitem{VanDerHofstad2007}
Remco Van Der~Hofstad.
\newblock Random graphs and complex networks.
\newblock {\em Unpublished manuscript}, 2007.

\bibitem{hoorn2013}
Pim van~der Hoorn and Nelly Litvak.
\newblock Degree-degree correlations in directed networks with heavy-tailed
  degrees.
\newblock {\em arXiv preprint arXiv:1310.6528}, 2013.

\end{thebibliography}

\end{document}